\documentclass{amsart}
\usepackage{amsmath, amsfonts, amssymb}
\usepackage{float}
\usepackage{graphicx}
\usepackage{psfrag}
\usepackage{tableau}

\newtheorem{thm}{Theorem}[section]
\newtheorem{lemma}[thm]{Lemma}
\newtheorem{prop}[thm]{Proposition}
\newtheorem{cor}[thm]{Corollary}

\theoremstyle{definition}
\newtheorem{remark}[thm]{Remark}
\newtheorem{example}[thm]{Example}
\newtheorem{defn}[thm]{Definition}

\numberwithin{table}{subsection}

\DeclareMathOperator{\Gr}{Gr}
\DeclareMathOperator{\Fl}{Fl}

\DeclareMathOperator{\GL}{GL}

\DeclareMathOperator{\Ker}{Ker}
\DeclareMathOperator{\Span}{Span}

\DeclareMathOperator{\QH}{QH}

\DeclareMathOperator{\codim}{codim}

\DeclareMathOperator{\weight}{wt}
\DeclareMathOperator{\scabs}{scabs}
\newcommand{\Bo}{{\mathbf B}}
\newcommand{\ssm}{\smallsetminus}
\renewcommand{\Re}{\operatorname{Re}}

\newcommand{\N}{{\mathbb N}}
\newcommand{\Z}{{\mathbb Z}}

\newcommand{\R}{{\mathbb R}}
\newcommand{\C}{{\mathbb C}}
\newcommand{\cA}{{\mathcal A}}
\newcommand{\cB}{{\mathcal B}}
\newcommand{\cD}{{\mathcal D}}

\newcommand{\pt}{\text{pt}}

\newcommand{\ev}{\operatorname{ev}}
\newcommand{\wtil}{\widetilde}
\newcommand{\wh}{\widehat}
\newcommand{\wb}{\overline}
\newcommand{\hmm}[1]{\mbox{}\hspace{#1mm}\mbox{}}

\newcommand{\ignore}[1]{}

\newcommand{\Mb}{\wb{\mathcal M}}

\newcommand{\xra}{\xrightarrow}

\newcommand{\border}{\text{\large$\triangle$}}
\newcommand{\temp}{\text{\rm temp}}
\newcommand{\gash}{\text{\rm gash}}
\newcommand{\scab}{\text{\rm scab}}
\def\ds{\displaystyle}

\def\la{\lambda}
\def\La{\Lambda}

\def\Gright{\overrightarrow{\mathcal G}}

\def\Rright{\overrightarrow{\mathcal R}}
\def\Rleft{\overleftarrow{\mathcal R}}
\def\Pright{\overrightarrow{\mathcal P}}
\def\Pleft{\overleftarrow{\mathcal P}}


\newcommand{\pic}[2]{\includegraphics[scale=#1]{#2}}

\begin{document}

\title[Mutations of puzzles and equivariant cohomology]%
{Mutations of puzzles and equivariant cohomology of two-step flag
  varieties}

\date{August 27, 2014}

\author{Anders~Skovsted~Buch}
\address{Department of Mathematics, Rutgers University, 110
  Frelinghuysen Road, Piscataway, NJ 08854, USA}
\email{asbuch@math.rutgers.edu}

\subjclass[2010]{Primary 05E05; Secondary 14N15, 14M15, 14N35}

\thanks{The author was supported in part by NSF grants DMS-0906148 and
  DMS-1205351.}

\begin{abstract}
  We introduce a mutation algorithm for puzzles that is a
  three-direction analogue of the classical jeu de taquin algorithm
  for semistandard tableaux.  We apply this algorithm to prove our
  conjectured puzzle formula for the equivariant Schubert structure
  constants of two-step flag varieties.  This formula gives an
  expression for the structure constants that is positive in the sense
  of Graham.  Thanks to the equivariant version of the `quantum equals
  classical' result, our formula specializes to a
  Littlewood-Richardson rule for the equivariant quantum cohomology of
  Grassmannians.
\end{abstract}

\maketitle

\section{Introduction}\label{sec:intro}

In 1999 Allen Knutson circulated a conjecture stating that any
Schubert structure constant of the cohomology ring of a partial flag
variety $X = \GL(n)/P$ can be expressed as the number of puzzles that
can be created using a list of triangular puzzle pieces with matching
side labels \cite{knutson:conjectural}.  While this conjecture was
proved in the special case where $X$ is a Grassmann variety
\cite{knutson.tao.ea:honeycomb, knutson.tao:puzzles}, Knutson
discovered counter examples to his general conjecture.  In later work
by Kresch, Tamvakis, and the author
\cite{buch.kresch.ea:gromov-witten} it was proved that the (3 point,
genus zero) Gromov-Witten invariants of Grassmannians are special
cases of the Schubert structure constants of two-step flag varieties
$\Fl(a,b;n)$.  In fact, the map that sends a rational curve to its
kernel-span pair \cite{buch:quantum} provides a bijection between the
curves counted by a Gromov-Witten invariant and the points of
intersection of three general Schubert varieties in a two-step flag
variety.  Supported by computer verification, it was suggested in
\cite{buch.kresch.ea:gromov-witten} that Knutson's conjecture might
correctly predict the Schubert structure constants of two-step flag
varieties.  This case of the conjecture has recently been proved
\cite{buch.kresch.ea:puzzle}.  A different positive combinatorial
formula for the cohomological structure constants of two-step flag
varieties had earlier been proved by Co\c{s}kun
\cite{coskun:littlewood-richardson}.  See also
\cite{knutson.purbhoo:product} for a relation between puzzles and
Belkale-Kumar coefficients.

The cohomology ring of a homogeneous space $X = G/P$ generalizes to
the equivariant cohomology ring $H^*_T(X;\Z)$, whose structure
incorporates the action of a torus $T$.  The Schubert structure
constants of this ring are elements of $H^*_T(\pt,\Z)$, which can be
identified with the polynomial ring $\Z[y_1,\dots,y_n]$.  Graham has
proved that the equivariant Schubert structure constants are
polynomials with positive coefficients in the differences
$y_{i+1}-y_i$ \cite{graham:positivity}.  Knutson and Tao's paper
\cite{knutson.tao:puzzles} proves an equivariant generalization of the
puzzle rule for Grassmannians that makes Graham's positivity result
explicit.  The equivariant puzzles in this rule are composed by
triangular puzzle pieces as well as rhombus shaped {\em equivariant
  puzzle pieces}.  The equivariant pieces are required to be vertical,
and each equivariant piece contributes a {\em weight\/} $y_j-y_i$ with
$i<j$, where the values of $i$ and $j$ depend on the location of the
equivariant piece in the puzzle.  Knutson and Tao define the weight of
an equivariant puzzle to be the product of the weights of its
equivariant pieces, and prove that any equivariant Schubert structure
constant of a Grassmann variety is equal to the sum of the weights of
a collection of equivariant puzzles.  A different Graham-positive
formula for the equivariant structure constants of Grassmannians was
later obtained independently by Molev
\cite{molev:littlewood-richardson*1} and Kreiman
\cite{kreiman:equivariant}.  In addition, Knutson has recently
generalized the puzzle rule for Grassmannians to equivariant
$K$-theory \cite{knutson:puzzles}.

Efforts to prove Knutson's conjecture more than a decade ago resulted
in a conjectured Graham-positive formula for the equivariant Schubert
structure constants of any two-step flag variety $\Fl(a,b;n)$, which
generalizes both the equivariant puzzle rule for Grassmannians and the
cohomological puzzle rule for two-step flag varieties.  This
conjecture was published in Co\c{s}kun and Vakil's survey
\cite{coskun.vakil:geometric} together with a suggested correction of
Knutson's cohomological conjecture for three-step flag varieties.  The
main result in the present paper is a proof of the conjectured
equivariant puzzle formula for two-step flag varieties
(Theorem~\ref{thm:puzzle}).

Our paper \cite{buch.mihalcea:quantum} with Mihalcea proves that the
equivariant Gromov-Witten invariants of Grassmannians are special
cases of the equivariant Schubert structure constants of two-step flag
varieties, thus generalizing the `quantum equals classical' result
from \cite{buch.kresch.ea:gromov-witten}.  Our puzzle formula
therefore specializes to a Littlewood-Richardson rule for the
equivariant quantum cohomology of Grassmannians that accords with
Mihalcea's result \cite{mihalcea:positivity} that the equivariant
Gromov-Witten invariants satisfy Graham positivity.  While different
formulas for equivariant Gromov-Witten invariants are known
\cite{mihalcea:equivariant, mihalcea:equivariant*1,
  beazley.bertiger.ea:rim-hook}, positive formulas have not been
available earlier for either the equivariant cohomology of two-step
flag varieties or the equivariant quantum cohomology of Grassmannians.

The main combinatorial construction in our paper is an algorithm
called {\em mutation of puzzles}, which is analogous to
Sch\"utzenberger's jeu de taquin algorithm for semistandard Young
tableaux.  Recall that the jeu de taquin algorithm operates on Young
tableaux that contain a {\em flaw\/} in the form of an empty box, and
works by making natural changes to move the empty box to a different
location in the tableau.  Our mutation algorithm similarly operates on
{\em flawed puzzles}.  A flaw in a puzzle can be a {\em pair of
  gashes\/} on the boundary, a {\em marked scab}, or a {\em temporary
  puzzle piece}.  Gashes and scabs are also present in Knutson and
Tao's work \cite{knutson.tao:puzzles}, whereas temporary puzzle pieces
are new in this paper.  Flawed puzzles that contain a gash pair or a
marked scab can be mutated in exactly one way.  On the other hand, a
puzzle containing a temporary puzzle piece has exactly three
mutations, which correspond to moving the temporary piece in three
different directions.  The mutation algorithm therefore organizes the
set of all flawed puzzles into a trivalent graph with leaves (see
Figure~\ref{ssec:mutations}).  In contrast, the jeu de taquin
algorithm offers only two choices for moving an empty box in a
tableau.  Our definition of flawed puzzles allows equivariant puzzle
pieces to appear in arbitrary orientations and also allows the shape
of a puzzle to be a hexagon.  This ensures that rotations of puzzles
are again puzzles, which in turn simplifies the definition of the
mutation algorithm.

The changes made to a puzzle during a mutation are based on the
following observation.  Suppose that a puzzle contains a {\em gash},
i.e.\ the labels of two puzzle pieces next to each other do not match.
Then there is {\em at most one way\/} to replace either of these
pieces with a different puzzle piece such that the gash disappears and
only one new gash is created by the replacement.  This provides a
natural method for moving a gash around in a puzzle, which we call
{\em propagation\/} of the gash.  Given a flawed puzzle, the mutation
algorithm first {\em resolves\/} the flaw by replacing it with two
gashes.  Both of these gashes are then propagated as far as possible.
Our main technical result states that the two gashes will propagate to
the same location in the puzzle, where they create a new flaw.  While
the mutation algorithm itself can be formulated in terms of general
principles, the proof that it works requires some case by case
analysis.  For example, the proof of the above-mentioned technical
result is a winding number argument that is justified with case
checking.  Even so, our construction of the mutation algorithm applies
without change to the puzzles appearing in the conjectured formula for
the cohomology of three-step flag varieties; this will be explained in
\cite{buch:3step} together with the consequences for this conjecture.
It is natural to speculate that a correction of Knutson's general
conjecture for $\GL(n)/P$, if one exists, should be formulated in
terms of puzzles that can be mutated.

Thanks to an idea that originates in Molev and Sagan's work on
products of factorial Schur functions
\cite{molev.sagan:littlewood-richardson}, any formula for the
equivariant Schubert structure constants of a homogeneous space $X =
G/P$ can be proved by verifying certain recursive identities
associated to multiplication with divisor classes, together with
showing that the formula is compatible with restriction of equivariant
Schubert classes to torus-fixed points \cite{kostant.kumar:nil*1,
  andersen.jantzen.ea:representations, billey:kostant}.  This method
was used in \cite{knutson.tao:puzzles}.  Molev and Sagan's method
requires the verification of $2r$ families of recursive identities,
where $r$ is the rank of the Picard group of $X$.  By working with
equivariant cohomology with coefficients in the polynomial ring $R =
\C[\delta_0,\delta_1,\delta_2]$, we combine the 4 families of
identities required for a two-step flag variety into a single
recursive identity that involves powers of a 12-th root of unity
$\zeta \in \C$.  The proof that this identity is satisfied by the
constants defined by our puzzle formula involves assigning an {\em
  aura\/} in the ring $R$ to various objects related to puzzles.  Here
the powers of $\zeta$ are used as unit vectors whose directions
correspond to puzzle angles, and the variables $\delta_0$, $\delta_1$,
and $\delta_2$ correspond to simple puzzle labels.  The recursive
identity then follows from the mutation algorithm together with simple
identities among auras.  Our proof of the puzzle formula is logically
self-contained starting from basic properties of equivariant
cohomology \cite{kostant.kumar:nil*1, anderson.fulton:equivariant} and
the Monk/Chevalley formula \cite{chevalley:decompositions,
  monk:geometry}.

The proofs of the puzzle formulas in \cite{knutson.tao:puzzles,
  buch.kresch.ea:puzzle} rely on bijections of puzzles to establish
certain basic identities.  These bijections are formulated in terms of
{\em propagation rules\/} stating that a small region of a puzzle with
a particular look must be changed in a specified way.  Knutson and
Tao's bijection requires around 10 rules, while the bijection in
\cite{buch.kresch.ea:puzzle} uses a list of 80 propagation rules.  In
contrast the mutation algorithm is defined without lists of rules.  In
Section~\ref{ssec:biject} we explain how mutations of puzzles can be
used to give a new construction of the bijections from
\cite{knutson.tao:puzzles, buch.kresch.ea:puzzle}.  This construction
involves that some areas of a puzzle can be changed by more than one
mutation, which, at least for two-step puzzles, is simpler than giving
a direct description of the end result of the bijection.  We also
sketch how the {\em breathing\/} construction of Knutson, Tao, and
Woodward \cite{knutson.tao.ea:honeycomb} can be carried out using
mutations; this application was pointed out by the referee.

This paper is organized as follows.  In Section~\ref{sec:equiv} we
state our puzzle formula for the equivariant cohomology of two-step
flag varieties and specialize it to the equivariant Gromov-Witten
invariants of Grassmannians.  Section~\ref{sec:recurse} explains the
recursive identity required to prove the formula.
Section~\ref{sec:mutations} starts with an informal introduction of
the mutation algorithm, after which we give the precise definitions
and prove that the mutation algorithm works as required.  Finally,
Section~\ref{sec:aura} defines auras associated to various objects in
puzzles and uses this concept to prove our main result.

Parts of the work on writing this paper was carried out during a visit
to the University of Copenhagen during the summer of 2013.  We thank
the Mathematics Department in Copenhagen for their hospitality and for
providing a friendly and stimulating environment.  We thank Andrew
Kresch and Kevin Purbhoo for many inspiring discussions about puzzles.
We finally thank the referee for many helpful comments and
suggestions, including the above-mentioned relation to breathing of
puzzles.


\section{The equivariant puzzle formula}\label{sec:equiv}

\subsection{Two-step flag varieties}\label{ssec:2step}

Fix integers $0 \leq a \leq b \leq n$ and let $X = \Fl(a,b;n)$ be the
variety of two-step flags $A \subset B \subset \C^n$ such that
$\dim(A)=a$ and $\dim(B)=b$.  A {\em 012-string\/} for $X$ is a
sequence $u = (u_1,u_2,\dots,u_n) \in \Z^n$ consisting of $a$ zeros,
$b-a$ ones, and $n-b$ twos.  The Schubert varieties in $X$ are indexed
by these 012-strings.  Let $e_1,e_2,\dots,e_n$ be the standard basis
for $\C^n$, let $\Bo \subset \GL_n(\C)$ be the Borel subgroup of upper
triangular matrices, and let $\Bo^- \subset \GL_n(\C)$ be the opposite
Borel subgroup of lower triangular matrices.  We also let $T = \Bo \cap
\Bo^-$ be the maximal torus of diagonal matrices.  Given any 012-string
$u$ for $X$, we define the subspaces $A_u = \Span_\C\{ e_i \mid u_i=0
\}$ and $B_u = \Span_\C\{ e_i \mid u_i \leq 1 \}$ of $\C^n$.  Then
$(A_u,B_u)$ is a point in $X$, and the $T$-fixed points in $X$ are
exactly the points of this form.  Let $X_u = \overline{\Bo.(A_u,B_u)}$
the {\em Schubert variety\/} defined by $u$, and let $X^u =
\overline{\Bo^-.(A_u,B_u)}$ the {\em opposite Schubert variety\/}
defined by $u$.  We have $\dim(X_u) = \codim(X^u,X) = \ell(u) =
\#\{i<j \mid u_i>u_j\}$.

Let $H^*_T(X;\Z)$ denote the $T$-equivariant cohomology ring of $X$.
An introduction to this ring can be found in
\cite{anderson.fulton:equivariant}.  Each $T$-stable closed subvariety
$Z \subset X$ defines an equivariant class $[Z] \in H^{2d}_T(X;\Z)$,
where $d = \codim(Z,X)$.  Pullback along the structure morphism $X \to
\{\pt\}$ gives $H^*_T(X;\Z)$ the structure of an algebra over the ring
$\La := H^*_T(\pt;\Z)$, and $H^*_T(X;\Z)$ is a free $\La$-module with
a basis consisting of the Schubert classes $[X^u]$ indexed by all
012-strings for $X$.  The {\em equivariant Schubert structure
  constants\/} of $X$ are the unique classes $C^w_{u,v} \in \La$
defined by the equation
\begin{equation}\label{eqn:const}
[X^u] \cdot [X^v] = \sum_w C^w_{u,v} [X^w] \ \in H^*_T(X;\Z) \,,
\end{equation}
where the sum is over all 012-strings $w$ for $X$.  Let $\int_X :
H^*_T(X;\Z) \to \La$ denote the pushforward along the map $X \to
\{\pt\}$.  For arbitrary 012-strings $u$ and $v$ for $X$ we then have
$\int_X [X^u] \cdot [X_v] = \delta_{u,v}$.  It follows that the
equivariant structure constants of $X$ are given by
\[
C^w_{u,v} \ = \ \int_X [X^u] \cdot [X^v] \cdot [X_w] \,.
\]

Each basis element $e_i$ for $\C^n$ defines a one-dimensional
$T$-representation $\C e_i$, where the action is given by
$(t_1,\dots,t_n).e_i = t_i e_i$.  This representation can be regarded
as a $T$-equivariant line bundle over a point.  We let $y_i = -c_1(\C
e_i) \in \La$ be the corresponding equivariant Chern class with
negated sign.\footnote{The sign ensures consistency with standard
  notation for double Schubert polynomials.}  We then have $\La =
\Z[y_1,\dots,y_n]$.  Since $H^*_T(X;\Z)$ is a graded ring, it follows
that each structure constant $C^w_{u,v} \in \La$ is a homogeneous
polynomial of total degree $\ell(u)+\ell(v)-\ell(w)$.  The constants
$C^w_{u,v} \in \Z$ for which $\ell(u)+\ell(v)=\ell(w)$ are the
structure constants of the ordinary cohomology ring $H^*(X;\Z)$.
These constants are given by the cohomological puzzle rule proved in
\cite{buch.kresch.ea:puzzle}.  A result of Graham
\cite{graham:positivity} asserts that every equivariant structure
constant $C^w_{u,v} \in \La$ is a polynomial with positive
coefficients in the differences $y_{i+1}-y_i$, i.e.\ we have
$C^w_{u,v} \in \Z_{\geq 0}[y_2-y_1,\dots,y_n-y_{n-1}]$.  We proceed to
state our manifestly positive formula for these constants.

\subsection{Equivariant puzzles}\label{ssec:equiv_puzzles}

A {\em puzzle piece\/} is a figure from the following list.
\[
\begin{split}
&
\pic{.7}{u000} \ \ 
\pic{.7}{u111} \ \ 
\pic{.7}{u222} \ \ 
\pic{.7}{u310} \ \ 
\pic{.7}{u421} \ \ 
\pic{.7}{u520} \ \ 
\pic{.7}{u623} \ \ 
\pic{.7}{u740} \\
&
\pic{.7}{ua01} \ \ 
\pic{.7}{ub12} \ \ 
\pic{.7}{uc02} \ \ 
\pic{.7}{ud32} \ \ 
\pic{.7}{ue04} \ \ 
\pic{.7}{uf34} \ \ 
\pic{.7}{ug06} \ \ 
\pic{.7}{uh72}
\end{split}
\]
The triangular puzzle pieces come from the cohomological puzzle rule
\cite{buch.kresch.ea:puzzle}, which was originally conjectured by
Knutson \cite{knutson:conjectural}.  In Knutson's notation the side
labels were parenthesized strings of the integers 0, 1, and 2.  The
labels that are greater than two can be translated to such strings as
follows:
\[
3=10, \ 4=21, \ 5=20, \ 6=2(10), \ 7=(21)0 \,.
\]
The labels 0, 1, 2 are called {\em simple\/} and the other labels 3,
4, 5, 6, 7 are called {\em composed}.  Notice that a triangular puzzle
piece is uniquely determined if the labels on two of its sides are
known.

The rhombus-shaped puzzle pieces are called {\em equivariant puzzle
  pieces}.  The first equivariant piece comes from Knutson and Tao's
puzzle formula for the equivariant structure constants of
Grassmannians \cite{knutson.tao:puzzles}.  In fact, the first five
equivariant pieces are very natural from the statement of this formula
together with the cohomological puzzle rule for two-step flag
varieties.  The last three equivariant pieces are more surprising, as
each of them embeds the same simple label on all sides, which appears
to violate the philosophy of Knutson's original conjecture
\cite{knutson:conjectural}.  Puzzle pieces may be rotated arbitrarily,
but they may not be reflected.  An equivariant puzzle piece is called
{\em vertical\/} if it is oriented as in the above list.

Define a {\em triangular puzzle\/} to be any equilateral triangle made
from puzzle pieces with matching labels, i.e.\ any two puzzle pieces
next to each other assign the same label to the side that they share.
We also demand that all labels on the boundary of the triangle are
simple, and that the triangle is `right side up', i.e.\ its bottom
border is a horizontal line segment.  The sides of the puzzle pieces
in a puzzle are called {\em puzzle edges}, and the three sides of the
boundary of the puzzle are called {\em border segments}.  We will say
that a triangular puzzle $P$ has {\em boundary\/} $\border^{u,v}_w$,
also written as $\partial P = \border^{u,v}_w$, if $u$ is the string
of labels on the left border segment, $v$ is the string of labels on
the right border segment, and $w$ is the string of labels on the
bottom border segment, all read in left to right order.

The triangular puzzle $P$ is called an {\em equivariant puzzle\/} for
$X$ if all its equivariant pieces are vertical and the boundary of $P$
is $\border^{u,v}_w$ where $u$, $v$, and $w$ are 012-strings for $X$.
The composed labels in any puzzle are uniquely determined by the
simple labels, so we will often omit them in pictures of puzzles.  The
following are two pictures of the same equivariant puzzle for the
variety $\Fl(2,4;6)$, with and without the composed labels.  This
puzzle has boundary $\border^{u,v}_w$ where $u = (1,1,0,2,0,2)$, $v =
(0,2,1,2,1,0)$, and $w = (1,2,0,2,1,0)$.
\[
\pic{.5}{equivex8} \hspace{10mm} \pic{.5}{equivex3}
\]

Given an equivariant puzzle $P$ for $X$, we number the edges of the
bottom border segment from 1 to $n$, starting from the left.  Each
equivariant puzzle piece $q$ in $P$ has a {\em weight\/} defined by
$\weight(q) = y_j - y_i$, where $i$ is the number of the bottom edge
obtained by following a south-west line from $q$, and $j$ is the
bottom edge number obtained by following a south-east line from $q$.
For example, the following equivariant puzzle piece has weight
$y_6-y_3$.
\[
\psfrag{1}{1}
\psfrag{2}{2}
\psfrag{3}{3}
\psfrag{4}{4}
\psfrag{5}{5}
\psfrag{6}{6}
\psfrag{7}{$n$}
\pic{.4}{editborder}
\]
The weight of the equivariant puzzle $P$ is the product of the weights
of all equivariant puzzle pieces in $P$.

Our main result is the following Graham-positive combinatorial formula
for the equivariant Schubert structure constants of $X$, which
generalizes both Knutson and Tao's equivariant rule for Grassmannians
\cite{knutson.tao:puzzles} and the cohomological puzzle rule for
two-step flag varieties \cite{buch.kresch.ea:puzzle}.  We conjectured
this formula more than 10 years ago, and our conjecture was printed in
Co\c{s}kun and Vakil's survey \cite{coskun.vakil:geometric}.\footnote{The
  statement in \cite{coskun.vakil:geometric} lacks two of the
  equivariant puzzle pieces; the author of the present paper is
  entirely responsible for this.}

\begin{thm}\label{thm:puzzle}
  Let $u$, $v$, and $w$ be 012-strings for the two-step flag variety
  $X = \Fl(a,b;n)$.  Then the equivariant Schubert structure constant
  $C^w_{u,v} \in \La$ is given by
  \[
  C^w_{u,v} \ = \ \sum_{\partial P = \triangle^{u,v}_w} \weight(P)
  \]
  where the sum is over all equivariant puzzles $P$ for $X$ with
  boundary $\border^{u,v}_w$.
\end{thm}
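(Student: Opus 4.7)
The plan is to verify that the puzzle sums
\[
P^w_{u,v} \ := \ \sum_{\partial P = \triangle^{u,v}_w} \weight(P)
\]
satisfy the same recursion and initial conditions that uniquely characterize the equivariant Schubert structure constants $C^w_{u,v}$. This uniqueness is the standard Molev--Sagan framework \cite{molev.sagan:littlewood-richardson, knutson.tao:puzzles}: a family of equivariant constants is determined by its values at sufficiently extreme triples $(u,v,w)$, obtained from restrictions of Schubert classes to $T$-fixed points \cite{billey:kostant, kostant.kumar:nil*1}, together with a family of divisor recursions arising from the Monk/Chevalley formula \cite{chevalley:decompositions, monk:geometry}. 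For a two-step flag variety the Picard group has rank $2$, so four divisor recursions are needed; following the suggestion in the introduction, I would work with equivariant cohomology with coefficients in $R = \C[\delta_0,\delta_1,\delta_2]$ and use a primitive $12$-th root of unity $\zeta \in \C$ to package these four families into a single master identity.

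The combinatorial heart of the argument is the \emph{mutation algorithm}. I would first extend the notion of a puzzle to a \emph{flawed puzzle} in which one matching condition is locally relaxed --- either a pair of gashes on the boundary, a marked scab, or a temporary puzzle piece. Starting from the local observation that a single edge mismatch (a \emph{gash}) can be repaired and moved by replacing one of the two adjoining pieces in at most one way, I would define \emph{propagation} of a gash, and then define mutation of a flawed puzzle as: resolve the flaw into two gashes and propagate each until it stops. The key technical assertion is that the two propagated gashes always terminate simultaneously at a common location, where they together form a new, recognizable flaw. Combined with a count of outgoing mutations (three at temporary-piece flaws, one at gash-pair and marked-scab flaws), this organizes flawed puzzles into a trivalent graph whose leaves are the genuine equivariant puzzles for $X$.

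Finally, I would assign an \emph{aura} in $R$ to each relevant puzzle object --- edges, vertices, gashes, scabs, temporary pieces --- using $\delta_0,\delta_1,\delta_2$ to encode the three simple labels and the powers of $\zeta$ as unit vectors in the six puzzle directions. A short list of local aura identities, verified piece by piece, should force the weights of flawed puzzles along a mutation path to telescope, so that the net contribution at the leaves reproduces the master divisor identity for $P^w_{u,v}$; together with a direct check that extreme triples $(u,v,w)$ yield the correct initial values, this proves the theorem. The main obstacle I anticipate is the well-definedness of mutation, i.e.\ the claim that the two propagated gashes terminate simultaneously and fit together as a flaw. The winding-number argument sketched in the introduction is conceptually clean but will require case analysis against every puzzle piece, including the three equivariant pieces that embed a single simple label on all four sides, whose existence runs against the philosophy of Knutson's original conjecture and should be expected to be the trickiest part of the verification. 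Arranging the aura cancellations to match this case analysis in every local configuration is the accompanying technical burden.
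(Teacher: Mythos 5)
Your proposal follows essentially the same route as the paper: packaging the Molev--Sagan divisor recursions into a single identity over $R=\C[\delta_0,\delta_1,\delta_2]$ via powers of $\zeta$, proving well-definedness of mutation by the winding-number/case analysis, and using aura cancellations over a mutation-closed set of flawed puzzles together with the extreme case $C^w_{w,w}$ to verify the recursion. The only slip is that the leaves of the mutation graph are the flawed puzzles with gash-pair or marked-scab flaws rather than the genuine (flawless) equivariant puzzles, but your accounting of outgoing mutations treats these correctly, so the approach stands.
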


\begin{example}
  For $X = \Fl(2,4;5)$ we have $[X^{01201}] \cdot [X^{10102}] =
  [X^{12010}] + [X^{11200}] + (y_4-y_1)[X^{12001}] +
  (y_5+y_4-y_3-y_1)[X^{10210}] + (y_4-y_3)(y_4-y_1)[X^{10201}]$.  The
  puzzles required to compute this product are:

  \pic{.5}{mult4} \ \ \pic{.5}{mult5} \ \ \pic{.5}{mult1} 

  \pic{.5}{mult3} \ \ \pic{.5}{mult6} \ \ \pic{.5}{mult2}
\end{example}

\subsection{Equivariant quantum cohomology}\label{ssec:quantum}

Let $X = \Gr(m,n)$ be the Grassmann variety of $m$-dimensional
subspaces of $\C^n$.  By identifying $X$ with the variety
$\Fl(m,m;n)$, we may index the Schubert varieties in $X$ by strings
$\la=(\la_1,\dots,\la_n)$ containing $m$ zeros and $n-m$ twos.  Each
such string $\la$ can be identified with a {\em Young diagram\/}
contained in the rectangle with $m$ rows and $n-m$ columns.  More
precisely, $\la$ defines a path from the lower-left corner to the
upper-right corner of this rectangle, where the $i$-th step is
vertical if $\la_i=0$ and horizontal if $\la_i=2$, and $\la$ is
identified with the portion of the rectangle that is north-west of
this path.  For example, we identify $\la = (2,0,2,2,0,2,0,2)$ with
the Young diagram $\tableau{6}{{}&{}&{}&{}\\{}&{}&{}\\{}}$.
\[
\def\labv{\hspace{-4.5mm}0}
\def\labh{\raisebox{4.8mm}{\hspace{-9.0mm}2}}
\tableau{13}{
  [a]&{}&{}&{}&[LTr]\labv&[]\labh\\
  [a]&{}&{}&[LT]\labv&[r]\labh\\
  [ab]\raisebox{-6.5mm}{2}&[LTb]\labv&[Tb]\labh&[b]\labh&[br]\\
}\vspace{2mm}
\]

Given a degree $d \in \N$, let $M_d = \Mb_{0,3}(X,d)$ denote the
Kontsevich moduli space of 3-pointed stable maps to $X$ of genus zero
and degree $d$.  This variety parametrizes morphisms of varieties $f :
C \to X$ defined on a tree of projective lines $C$ with three ordered
marked non-singular points, such that $f_*[C] = d\,
[X_{\tableau{4}{{}}}] \in H_2(X;\Z)$, and any component of $C$ that is
mapped to a single point in $X$ contains at least three special
points, where a special point is either marked or singular.  The
variety $M_d$ is equipped with {\em evaluation maps\/} $\ev_i : M_d
\to X$ for $1 \leq i \leq 3$, where $\ev_i$ sends a stable map to the
image of the $i$-th marked point in its domain.  We refer to
\cite{fulton.pandharipande:notes} for a careful construction of this
space.

The {\em equivariant quantum cohomology ring\/} $\QH_T(X)$ is an
algebra over the ring $\Lambda[q]$, which as a module is defined by
$\QH_T(X) = H_T^*(X;\Z) \otimes_\Lambda \Lambda[q]$.  The
multiplicative structure of $\QH_T(X)$ is determined by
\[
[X^\la] \star [X^\mu] = \sum_{\nu, d \geq 0} N^{\nu,d}_{\la,\mu}\, q^d
\, [X^\nu] \,,
\]
where the structure constants $N^{\nu,d}_{\la,\mu} \in \La$ are the
{\em equivariant Gromov-Witten invariants\/} defined by
\[
N^{\nu,d}_{\la,\mu} \ = \ \int_{M_d} \ev_1^*[X^\la] \cdot
\ev_2^*[X^\mu] \cdot \ev_3^*[X_\nu] \,.
\]
It is a non-trivial fact that this construction defines an associative
ring \cite{ruan.tian:mathematical, kontsevich.manin:gromov-witten,
  kim:equivariant}.  The structure constant $N^{\nu,d}_{\la,\mu}$ is a
homogeneous polynomial in $\La = \Z[y_1,\dots,y_n]$ of total degree
$|\la|+|\mu|-|\nu|-nd$.  If this degree is zero, then
$N^{\nu,d}_{\la,\mu}$ is the number of stable maps $f \in M_d$ for
which $\ev_1(f)$, $\ev_2(f)$, and $\ev_3(f)$ belong to (fixed) general
translates of the Schubert varieties $X^\la$, $X^\mu$, and $X_\nu$.

In \cite{buch:quantum} we introduced the {\em kernel\/} and {\em
  span\/} of a rational curve in a Grassmann variety as a tool to
study its Gromov-Witten invariants.  The kernel of a stable map $f : C
\to X$ is defined as the intersection of the $m$-planes in its image,
and the span of $f$ is the linear span of these $m$-planes:
\[
\Ker(f) = \bigcap_{V \in f(C)} V 
\text{ \ \ \ \ \ and \ \ \ \ \ }
\Span(f) = \sum_{V \in f(C)} V \,.
\]
Define the two-step flag variety $Y_d = \Fl(m-d,m+d;n)$ and the
three-step flag variety $Z_d = \Fl(m-d,m,m+d;n)$; these varieties can
be regarded as empty if $d > \min(m,n-m)$.  Let $p : Z_d \to X$ and $h
: Z_d \to Y_d$ be the projections.  It was proved in
\cite{buch.kresch.ea:gromov-witten} that, when $N^{\nu,d}_{\la,\mu}$
has degree zero, the map $f \mapsto (\Ker(f),\Span(f))$ defines an
explicit bijection between the set of stable maps counted by
$N^{\nu,d}_{\la,\mu}$ and the set of points in the intersection of
general translates of the Schubert varieties $h(p^{-1}(X^\la))$,
$h(p^{-1}(X^\mu))$, and $h(p^{-1}(X_\nu))$ in $Y_d$.  It follows that
$N^{\nu,d}_{\la,\mu}$ is equal to a classical triple intersection
number of Schubert varieties in $Y_d$.  The following equivariant
generalization of this result was obtained in
\cite[Thm.~4.2]{buch.mihalcea:quantum}.

\begin{thm}[\cite{buch.kresch.ea:gromov-witten,
    buch.mihalcea:quantum}]\label{thm:quantumclassical}
  We have $N^{\nu,d}_{\la,\mu} = \int_{Y_d} h_*p^*[X^\la] \cdot
  h_*p^*[X^\mu] \cdot h_*p^*[X_\nu]$.
\end{thm}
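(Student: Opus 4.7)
The plan is to carry out the kernel--span construction of \cite{buch.kresch.ea:gromov-witten} directly at the level of equivariant cohomology classes on $M_d=\Mb_{0,3}(X,d)$. For each stable map $f\colon C\to X$ of degree $d$, set $\phi(f)=(\Ker(f),\Span(f))\in Y_d$ and, for each $i\in\{1,2,3\}$, set $\sigma_i(f)=(\Ker(f),\ev_i(f),\Span(f))\in Z_d$; the inclusions $\Ker(f)\subset\ev_i(f)\subset\Span(f)$ hold by definition. These rules define $T$-equivariant morphisms $\phi\colon M_d\to Y_d$ and $\sigma_i\colon M_d\to Z_d$ satisfying $p\circ\sigma_i=\ev_i$ and $h\circ\sigma_i=\phi$. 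Form the iterated fiber product $W=Z_d\times_{Y_d}Z_d\times_{Y_d}Z_d$ with projections $\pi_i\colon W\to Z_d$ and common composition $\bar h=h\circ\pi_i\colon W\to Y_d$, and assemble the lifts into the $T$-equivariant morphism
\[
\sigma=(\sigma_1,\sigma_2,\sigma_3)\colon M_d\longrightarrow W,
\]
so that $\pi_i\circ\sigma=\sigma_i$ and $\bar h\circ\sigma=\phi$.

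The crucial geometric input is that $\sigma$ is birational. Each fiber of $h\colon Z_d\to Y_d$ is a copy of $\Gr(d,2d)$, of dimension $d^2$, so $\dim W=\dim Y_d+3d^2=m(n-m)+dn=\dim M_d$. Producing a preimage of a general point $(A,V_1,V_2,V_3,B)\in W$ amounts to finding a degree-$d$ rational curve through $V_1,V_2,V_3$ in the subvariety $\{V:A\subset V\subset B\}\cong\Gr(d,2d)$, and \cite{buch.kresch.ea:gromov-witten} shows that exactly one such curve exists over a dense open subset of $W$. Thus $\sigma$ is proper and birational, hence $\sigma_*1=1$ in $H^*_T(W)$; no extra work is needed for equivariance, since $\sigma$ is $T$-equivariant by construction.

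Granted this, the theorem is a formal consequence of the projection formula. For any $\eta\in H^*_T(W)$ one has $\phi_*(\sigma^*\eta)=\bar h_*\sigma_*\sigma^*\eta=\bar h_*(\sigma_*1\cdot\eta)=\bar h_*\eta$. Taking $\eta=\pi_1^*p^*[X^\la]\cdot\pi_2^*p^*[X^\mu]\cdot\pi_3^*p^*[X_\nu]$ and using $p\circ\pi_i\circ\sigma=\ev_i$ identifies $\sigma^*\eta$ with the Gromov--Witten integrand, so integration over $Y_d$ yields
\[
N^{\nu,d}_{\la,\mu}\;=\;\int_{Y_d}\bar h_*\bigl(\pi_1^*p^*[X^\la]\cdot\pi_2^*p^*[X^\mu]\cdot\pi_3^*p^*[X_\nu]\bigr).
\]
To rewrite the right-hand side as the triple product $h_*p^*[X^\la]\cdot h_*p^*[X^\mu]\cdot h_*p^*[X_\nu]$, iterate flat base change around the Cartesian squares defining $W$ and apply the projection formula for $h$ once per factor; flatness holds because $h$ is a Zariski-locally trivial homogeneous fibration.

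The principal obstacle is the birationality of $\sigma$: this is a nontrivial geometric fact about rational curves in Grassmannians, imported from \cite{buch.kresch.ea:gromov-witten}. Once it is in hand, the equivariant upgrade reduces to the functorial manipulations above.
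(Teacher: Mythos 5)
A preliminary remark: the paper does not prove Theorem~\ref{thm:quantumclassical} at all --- it is imported verbatim from \cite{buch.kresch.ea:gromov-witten} and \cite[Thm.~4.2]{buch.mihalcea:quantum} --- so the comparison is with the proofs in those references. Your sketch does reproduce their strategy in outline: pass through the three-step flag variety, map to the triple fiber product $W = Z_d\times_{Y_d}Z_d\times_{Y_d}Z_d$, use the dimension count and the uniqueness of the degree-$d$ curve through three general points of $\Gr(d,2d)$ to get birationality, and finish with the projection formula and base change (this last reduction, $\bar h_*(\pi_1^*\alpha\cdot\pi_2^*\beta\cdot\pi_3^*\gamma)=h_*\alpha\cdot h_*\beta\cdot h_*\gamma$, is correct as you state it).

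There is, however, a genuine gap at the very first step. The assignments $f\mapsto(\Ker(f),\Span(f))$ and $f\mapsto(\Ker(f),\ev_i(f),\Span(f))$ do \emph{not} define morphisms on $M_d$: for an arbitrary stable map one only has $\dim\Ker(f)\geq m-d$ and $\dim\Span(f)\leq m+d$, with strict inequality on a nonempty closed locus (for instance a degree-$d$ multiple cover of a line has span of dimension $m+1<m+d$ when $d\geq 2$). So $\phi$ and $\sigma$ are only rational maps, $\sigma$ is not a proper morphism, and the identity $\sigma_*1=1$ on which your entire computation rests is not available as written. The cited proof repairs this by replacing $M_d$ with the $T$-stable projective incidence variety $\Bl_d=\{(f,(A,B))\in M_d\times Y_d : A\subset\Ker(f),\ \Span(f)\subset B\}$, showing that the projection $\Bl_d\to M_d$ is birational (which uses irreducibility of $M_d$ and the fact that the \emph{generic} stable map has kernel and span of the expected dimensions --- a point that itself needs an argument), and then running your fiber-product computation with the honest morphism $\Bl_d\to W$. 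With that substitution the rest of your argument goes through equivariantly; you should also dispose of the degenerate case $Y_d=\emptyset$ and justify replacing the virtual class of $M_d$ by its fundamental class in the definition of $N^{\nu,d}_{\la,\mu}$.
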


Let $J^d(\la)$ denote the 012-string obtained from $\la$ by replacing
the first $d$ occurrences of $2$ and the last $d$ occurrences of $0$
with $1$.  For example, we obtain $J^2((2,0,2,2,0,2,0,2)) =
(1,0,1,2,1,2,1,2)$.  We then have $h(p^{-1}(X^\la)) = Y_d^{J^d(\la)}$,
i.e.\ $h(p^{-1}(X^\la))$ is the (opposite) Schubert variety in $Y_d$
defined by the 012-string $J^d(\la)$.  Furthermore, the varieties
$p^{-1}(X^\la)$ and $h(p^{-1}(X^\la))$ have the same dimension if and
only if the first $d$ occurrences of $2$ in $\la$ come before the last
$d$ occurrences of $0$; equivalently, the Young diagram of $\la$
contains a $d \times d$ rectangle.  Let $\la^\vee =
(\la_n,\la_{n-1},\dots,\la_1)$ denote the string $\la$ in reverse
order.  Then $X_\la$ is a translate of $X^{\la^\vee}$.  We obtain the
following consequence of Theorem~\ref{thm:puzzle} and
Theorem~\ref{thm:quantumclassical}.

\begin{cor}
  The Gromov-Witten invariant $N^{\nu,d}_{\la,\mu}$ is non-zero only
  if each of the Young diagrams of $\la$, $\mu$, and $\nu^\vee$
  contains a $d \times d$ rectangle.  In this case we have
  \[
  N^{\nu,d}_{\la,\mu} \ = \ \sum_P \weight(P)
  \]
  where the sum is over all equivariant puzzles $P$ for $Y_d$ with
  boundary $\border^{J^d(\la),J^d(\mu)}_{J^d(\nu^\vee)^\vee}$.
\end{cor}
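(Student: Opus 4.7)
The plan is to derive the corollary directly by combining Theorem~\ref{thm:quantumclassical} with Theorem~\ref{thm:puzzle}, rewriting the integrand in the quantum=classical formula as a product of equivariant Schubert classes of $Y_d$. Starting from
\[
N^{\nu,d}_{\la,\mu} \ = \ \int_{Y_d} h_*p^*[X^\la] \cdot h_*p^*[X^\mu] \cdot h_*p^*[X_\nu],
\]
I would first show that each factor is either a Schubert class of $Y_d$ or zero. Since $p : Z_d \to X$ is a smooth fibration, $p^*[X^\la] = [p^{-1}(X^\la)]$ holds equivariantly, and for a proper map $h$ the pushforward $h_*[V]$ equals $\deg(h|_V)\,[h(V)]$, interpreted as $0$ when $\dim h(V) < \dim V$.

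The text immediately preceding the corollary records that $h(p^{-1}(X^\la)) = Y_d^{J^d(\la)}$ and that the two dimensions agree precisely when the Young diagram of $\la$ contains a $d \times d$ rectangle; in that case, the kernel-span bijection of \cite{buch.kresch.ea:gromov-witten} shows the restriction is birational, yielding $h_*p^*[X^\la] = [Y_d^{J^d(\la)}]$. The same reasoning disposes of $[X^\mu]$. For $[X_\nu]$, the text notes that $X_\nu$ is a translate of $X^{\nu^\vee}$, so running the same kernel-span analysis with Schubert in place of opposite Schubert varieties (equivalently, transporting by $w_0$, which carries $Y_d^u$ to $Y_{d,u^\vee}$) gives $h_*p^*[X_\nu] = [Y_{d, J^d(\nu^\vee)^\vee}]$ exactly when the Young diagram of $\nu^\vee$ contains a $d \times d$ rectangle, and $0$ otherwise.

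The vanishing half of the corollary is then immediate: if any rectangle condition fails, a factor in the integrand vanishes. In the non-vanishing case,
\[
N^{\nu,d}_{\la,\mu} \ = \ \int_{Y_d} [Y_d^{J^d(\la)}] \cdot [Y_d^{J^d(\mu)}] \cdot [Y_{d, J^d(\nu^\vee)^\vee}] \ = \ C^{J^d(\nu^\vee)^\vee}_{J^d(\la),\, J^d(\mu)}
\]
by the identity $C^w_{u,v} = \int_X [X^u][X^v][X_w]$ from Section~\ref{ssec:2step} applied to $Y_d$, and Theorem~\ref{thm:puzzle} rewrites this structure constant as the asserted puzzle sum. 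The only step not handed to me verbatim by the cited results is the identification $h_*p^*[X_\nu] = [Y_{d, J^d(\nu^\vee)^\vee}]$; this is the main thing to justify, either by repeating the kernel-span computation with the Borel exchanged, or by transporting the known formula for $X^{\nu^\vee}$ through the translation relating $X_\nu$ and $X^{\nu^\vee}$. Everything else is bookkeeping with 012-strings.
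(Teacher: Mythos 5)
Your proposal is correct and follows essentially the same route as the paper: combine Theorem~\ref{thm:quantumclassical} with the identifications $h_*p^*[X^\la] = [Y_d^{J^d(\la)}]$ and $h_*p^*[X_\nu] = [(Y_d)_{J^d(\nu^\vee)^\vee}]$ (or vanishing when a rectangle condition fails), recognize the resulting integral as the structure constant $C^{J^d(\nu^\vee)^\vee}_{J^d(\la),J^d(\mu)}$, and apply Theorem~\ref{thm:puzzle}. The paper's proof simply asserts these identifications, so your extra care in justifying the pushforward of $[X_\nu]$ via the $w_0$-translation is a harmless elaboration of the same argument rather than a different approach.
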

\begin{proof}
  If the Young diagram of $\la$, $\mu$, or $\nu^\vee$ does not contain
  a $d \times d$ rectangle, then one of the classes $h_*p^*[X^\la]$,
  $h_*p^*[X^\mu]$, or $h_*p^*[X_\nu]$ is equal to zero.  On the other
  hand, if each of these Young diagrams contain a $d \times d$
  rectangle, then we obtain
  \[
  \begin{split}
  N^{\nu,d}_{\la,\mu} \ &= \ 
  \int_{Y_d} h_*p^*[X^\la] \cdot h_*p^*[X^\mu] \cdot h_*p^*[X_\nu] \\
  &= \ 
  \int_{Y_d} [Y_d^{J^d(\la)}] \cdot [Y_d^{J^d(\mu)}] \cdot 
  [(Y_d)_{J^d(\nu^\vee)^\vee}] \ = \ 
  C^{J^d(\nu^\vee)^\vee}_{J^d(\la),J^d(\mu)} \,,
  \end{split}
  \]
  and the result follows from Theorem~\ref{thm:puzzle}.
\end{proof}

\begin{example}
  In the equivariant quantum cohomology ring of $X = \Gr(2,5)$ we have
  \[
  \begin{split}
    & [X^{\tableau{4}{{}&{}\\{}}}\,] \star
    [X^{\tableau{4}{{}&{}&{}\\{}}}\,] \ \ = \\
    & \ \ (y_5-y_3)(y_5-y_1)(y_2-y_1)\,
    [X^{\tableau{4}{{}&{}&{}\\{}}}\,] \ + 
    \ (y_5-y_1)^2\, [X^{\tableau{4}{{}&{}&{}\\{}&{}}}\,] \ + \
    (y_5-y_1)\,[X^{\tableau{4}{{}&{}&{}\\{}&{}&{}}}\,] \ + \\
    & \ \ (y_5-y_3)(y_2-y_1)\, q \ + \
    (y_5-y_1)\, q\, [X^{\tableau{4}{{}}}\,] \ + \ q\,
    [X^{\tableau{4}{{}\\{}}}\,] \ + \ q\, [X^{\tableau{4}{{}&{}}}\,]
    \,.
  \end{split}
  \]
  The last four terms involving $q$ are accounted for by the
  following puzzles.
  
  \pic{.5}{quantum-16} \ \ 
  \pic{.5}{quantum-15} \ \ 
  \pic{.5}{quantum-14}

  \pic{.5}{quantum-19} \ \ 
  \pic{.5}{quantum-18} \ \ 
  \pic{.5}{quantum-17}

  \pic{.5}{quantum-21} \ \ 
  \pic{.5}{quantum-25}
\end{example}


\section{Recursive equations}\label{sec:recurse}

An observation that originates in Molev and Sagan's work
\cite{molev.sagan:littlewood-richardson} shows that all the
equivariant structure constants $C^w_{u,v}$ are determined by the
structure constants of the form $C^w_{w,w}$ by a set of recursive
identities.  This observation was used to prove the equivariant puzzle
rule for Grassmannians \cite{knutson.tao:puzzles}, and it was extended
to equivariant quantum cohomology in \cite{mihalcea:equivariant*1}.
Molev and Sagan's recursions apply to the equivariant structure
constants of any homogeneous space $Y$, and in general involves $2r$
families of identities where $r$ is the rank of the Picard group of
$Y$.  The structure constants of the form $C^w_{w,w}$ are given by a
formula of Kostant and Kumar \cite{kostant.kumar:nil*1}.  In this
section we will arrange the recursive identities into a single family,
focusing on the two-step flag variety $X = \Fl(a,b;n)$.

For this purpose we will work with $T$-equivariant cohomology with
coefficients in the polynomial ring $R = \C[\delta_0, \delta_1,
\delta_2]$.  The variables of this ring correspond to the simple
puzzle labels, and the field of complex numbers $\C$ will be utilized
as a two-dimensional real plane where puzzle angles can be encoded.
This will later make it possible to use the triangular geometry of
puzzles to prove the required recursive identities.  We have
$H^*_T(\pt;R) = R[y] := R[y_1,\dots,y_n]$ where $y_i = -c_1(\C e_i)$,
and this ring contains $\La$ as a subring.  Furthermore, the ring
$H^*_T(X;R) = H^*_T(X;\Z) \otimes_\Z R$ is an $R[y]$-algebra with an
$R[y]$-basis consisting of the equivariant Schubert classes $[X^u$].
The defining equation (\ref{eqn:const}) for the equivariant structure
constants $C^w_{u,v} \in \La$ is also valid in $H^*_T(X;R)$.

The {\em Bruhat order\/} on the set of 012-strings for $X$ is defined
by $u \leq v$ if and only if $X_u \subset X_v$.  We will write $u \to
u'$ if $u'$ covers $u$ in the Bruhat order, i.e.\ we have $u \leq u'$
and $\ell(u') = \ell(u)+1$.  Equivalently, $u'$ can be obtained from
$u$ by replacing a connected subsequence in one of the following three
ways:
\[
(0,2^m,1) \to (1,2^m,0)
\text{ \ \ \ or \ \ \ }
(0,2) \to (2,0)
\text{ \ \ \ or \ \ \ }
(1,0^m,2) \to (2,0^m,1) \,.
\]
Here $x^m$ denotes a sequence of $m$ copies of $x$.  Given a covering
$u \to u'$ we set
\[
\delta\left(\frac{u}{u'}\right) = \delta_{u_i} - \delta_{u'_i} \,,
\]
where $i$ is the smaller index for which $u_i \neq u'_i$.  For
example, we have $\delta(\frac{10221}{11220}) = \delta_0-\delta_1$ and
$\delta(\frac{12021}{12201}) = \delta_0-\delta_2$.  Finally, given any
012-string $u$ for $X$ we define
\[ 
C_u = \sum_{i=1}^n \delta_{u_i} y_i \ \in R[y] \,.
\]
For example, $C_{01021} = \delta_0 y_1 + \delta_1 y_2 + \delta_0 y_3 +
\delta_2 y_4 + \delta_1 y_5$.

Set $\zeta = \exp(\pi i / 6) \in \C$.  Notice that the odd powers of
$\zeta$ are unit vectors perpendicular to puzzle edges.
\[
\psfrag{1}{$\zeta$}
\psfrag{3}{\raisebox{2pt}{$\zeta^3$}}
\psfrag{5}{\raisebox{1pt}{\!\!$\zeta^5$}}
\psfrag{7}{$\!\!\zeta^7$}
\psfrag{9}{$\zeta^9$}
\psfrag{0}{$\zeta^{11}$}
\pic{1}{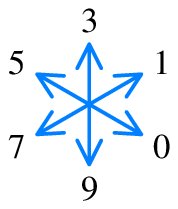}
\]

\begin{thm}\label{thm:recurse}
  The equivariant Schubert structure constants $C^w_{u,v}$ of the
  two-step partial flag variety $X = \Fl(a,b;n)$ satisfy the
  identities
  \begin{equation}\label{eqn:extreme}
    C^w_{w,w} = \prod_{i<j :\, w_i>w_j} (y_j-y_i)
  \end{equation}
  in $\La$ and
  \begin{multline}\label{eqn:recurse}
    (C_u \zeta^{11} + C_v \zeta^7 + C_w \zeta^3)\, C^w_{u,v} \ = \\
    \zeta^5 \sum_{u\to u'} \delta\left(\frac{u}{u'}\right) C^w_{u',v} +
    \zeta \sum_{v\to v'} \delta\left(\frac{v}{v'}\right) C^w_{u,v'} +
    \zeta^9 \sum_{w'\to w} \delta\left(\frac{w'}{w}\right) C^{w'}_{u,v}
  \end{multline}
  in $R[y]$, for all 012-strings $u$, $v$, and $w$ for $X$.
  Furthermore, the equivariant structure constants of $X$ are uniquely
  determined by these identities, i.e.\ any family of classes
  $C^w_{u,v}$ that satisfy (\ref{eqn:extreme}) and (\ref{eqn:recurse})
  are the equivariant structure constants of $X$.
\end{thm}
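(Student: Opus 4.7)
My plan is to establish the three assertions of the theorem in turn.

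For the diagonal formula (\ref{eqn:extreme}), I would apply $T$-equivariant localization to the integral expression $C^w_{w,w} = \int_X [X^w]^2 \cdot [X_w]$. Since $[X^w]|_z$ vanishes unless $z \geq w$ in Bruhat order and $[X_w]|_z$ vanishes unless $z \leq w$, the product $[X^w]\cdot[X_w]$ localizes nontrivially only at the fixed point $w$; combined with $\int_X [X^w]\cdot[X_w] = 1$, this reduces the computation to $C^w_{w,w} = [X^w]|_w$, and the standard Kostant--Kumar formula for the restriction of an opposite Schubert class to its own fixed point yields the claimed product of weights.

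For the recursion (\ref{eqn:recurse}), I would work with the divisor class $D_\delta = \delta_0 D_0 + \delta_1 D_1 + \delta_2 D_2 \in H_T^2(X; R)$, where each $D_k$ is chosen so that $D_k|_u = \sum_{i:\,u_i = k} y_i$; these can be taken as suitable $R$-linear combinations of the $T$-equivariant Chern classes of the tautological bundles $\mathcal{A}$, $\mathcal{B}/\mathcal{A}$, and $\C^n/\mathcal{B}$, so that $D_\delta|_u = C_u$. The corresponding equivariant Chevalley formula --- which I would derive from the Monk formula on the full flag variety by pushforward and check against the three types of Bruhat covers of $012$-strings listed in the section --- should take the form
\[
D_\delta \cdot [X^u] \;=\; C_u\,[X^u] \;+\; \sum_{u \to u'} \delta(u/u')\,[X^{u'}].
\]
Computing $D_\delta \cdot [X^u] \cdot [X^v]$ in two ways --- first by applying Chevalley to $[X^u]$ and then expanding, versus first expanding $[X^u]\cdot[X^v] = \sum_w C^w_{u,v}[X^w]$ and then applying Chevalley to each $[X^w]$ --- and comparing coefficients of $[X^w]$ yields
\[
(C_u - C_w)\,C^w_{u,v} \;=\; -\!\!\sum_{u \to u'}\!\!\delta(u/u')\,C^w_{u',v} \;+\; \sum_{w' \to w}\!\!\delta(w'/w)\,C^{w'}_{u,v},
\]
together with the analogous identity in $v$ obtained by distributing the divisor through $[X^v]$ instead. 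Multiplying these two identities by $\zeta^{11}$ and $\zeta^7$ respectively, adding, and invoking the relations $\zeta^{11}+\zeta^7+\zeta^3=0$, $-\zeta^{11}=\zeta^5$, $-\zeta^7=\zeta$, and $\zeta^{11}+\zeta^7=\zeta^9$ produces (\ref{eqn:recurse}) exactly.

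For uniqueness, let $\tilde C^w_{u,v}$ be another family satisfying both identities and set $D^w_{u,v} := C^w_{u,v} - \tilde C^w_{u,v} \in \La$. By $R[y]$-linearity the $D$'s also satisfy (\ref{eqn:recurse}), while $D^w_{w,w} = 0$ by (\ref{eqn:extreme}). I would induct on
\[
N(u,v,w) \;=\; (\ell(u_{\max}) - \ell(u)) + (\ell(v_{\max}) - \ell(v)) + \ell(w),
\]
which strictly decreases under each of the three moves $u \to u'$, $v \to v'$, $w' \to w$ appearing on the right hand side of (\ref{eqn:recurse}). Diagonal triples $u=v=w$ are handled directly; for any other triple the inductive hypothesis kills the RHS of (\ref{eqn:recurse}) for the $D$'s, reducing it to $(\zeta^{11}C_u + \zeta^7C_v + \zeta^3C_w)\,D^w_{u,v} = 0$ in the integral domain $R[y] = \C[\delta_0,\delta_1,\delta_2,y_1,\dots,y_n]$. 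The coefficient of $y_i$ in the bracketed factor equals $\zeta^{11}\delta_{u_i} + \zeta^7\delta_{v_i} + \zeta^3\delta_{w_i}$, and since $\zeta^7/\zeta^3 = \zeta^4 \notin \R$ this $\C$-linear combination of $\delta_0, \delta_1, \delta_2$ vanishes only when $u_i = v_i = w_i$; hence the whole factor is nonzero in $R[y]$ off the diagonal, forcing $D^w_{u,v} = 0$.

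The main technical obstacle is establishing the equivariant Chevalley formula for $D_\delta$ in the clean uniform form above --- specifically verifying that the single coefficient $\delta(u/u') = \delta_{u_i} - \delta_{u'_i}$ at the first disagreeing index correctly encodes all three types of Bruhat covers $(0,2^m,1)\to(1,2^m,0)$, $(0,2)\to(2,0)$, and $(1,0^m,2)\to(2,0^m,1)$ simultaneously; once this case check is done, the remaining steps are algebraic manipulations organized around the key identity $\zeta^{11}+\zeta^7+\zeta^3 = 0$.
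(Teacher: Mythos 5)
Your proposal is correct and follows essentially the same route as the paper: both derive the recursion by expanding the associativity relation for multiplication by a $\delta$-weighted divisor class satisfying an equivariant Chevalley formula (your $D_\delta$ is the paper's class $\cD$ shifted by the central constant $C_0$, which cancels), combine the two resulting real identities via the linearly independent vectors $\zeta^{11}$ and $\zeta^7$, reduce the diagonal case to the Kostant--Kumar restriction $[X^w]_w$, and prove uniqueness by an induction equivalent to descending induction on $\ell(u)+\ell(v)-\ell(w)$ using the nonvanishing of the factor $C_u\zeta^{11}+C_v\zeta^7+C_w\zeta^3$ off the diagonal. The only cosmetic differences are your use of localization in place of the paper's direct coefficient-extraction lemma and your coordinate-wise argument for that nonvanishing.
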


Theorem~\ref{thm:recurse} will be proved at the end of this section
after some additional notation has been introduced.  Given a class
$\Omega \in H^*_T(X;R)$ and a 012-string $u$ for $X$, we let $\Omega_u
\in R[y]$ denote the restriction of $\Omega$ to the $T$-fixed point
$(A_u,B_u)$.  The following identity is a special case of a formula of
Kostant and Kumar \cite[Prop.~4.24(a)]{kostant.kumar:nil*1}.

\begin{lemma}\label{lem:extreme}
  For any 012-string $w$ for $X$ we have $[X^w]_w = \ds\prod_{i<j :\,
    w_i>w_j}\! (y_j-y_i)$.
\end{lemma}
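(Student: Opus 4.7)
The plan is to identify $[X^w]_w$ with the equivariant Euler class of the normal bundle of $X^w$ in $X$ at the torus-fixed point $(A_w,B_w)$, and then to read off the $T$-weights of that normal bundle explicitly.

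First I would note that $(A_w,B_w)$ lies in the open orbit $\Bo^-.(A_w,B_w)\subset X^w$ and is therefore a smooth $T$-fixed point of $X^w$. By the standard self-intersection principle in equivariant cohomology, the restriction $[X^w]_w$ equals the equivariant Euler class (i.e.\ the top equivariant Chern class) of $N_{X^w/X}$ at $(A_w,B_w)$. The problem thus reduces to computing the $T$-weights of this normal bundle.

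For this I would decompose $T_{(A_w,B_w)}X$ using the canonical identification $T_{(A,B)}\Fl(a,b;n)=\Hom(A,B/A)\oplus\Hom(B,\C^n/B)$. At the fixed point this becomes the sum of the one-dimensional weight spaces $\Hom(\C e_i,\C e_j)$ indexed by ordered pairs $(i,j)$ with $w_i<w_j$, each carrying $T$-weight $y_i-y_j$ because $\C e_i$ has weight $-y_i$ in the convention $y_i=-c_1(\C e_i)$. The tangent subspace of $X^w$ is the image of the infinitesimal $\Bo^-$-action, which is spanned by the deformations $e_i\mapsto e_i+t\,e_j$ with $j>i$ that genuinely move the flag; these are indexed by pairs $(i,j)$ with $i<j$ and $w_i<w_j$, and a dimension count confirms that they span all of $T_{(A_w,B_w)}X^w$. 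The normal bundle is consequently the complementary sum, indexed by pairs $(i,j)$ with $i>j$ and $w_i<w_j$; relabeling yields weights $y_j-y_i$ for $i<j$ with $w_i>w_j$, i.e.\ exactly one weight per inversion of $w$.

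Taking the product of these weights gives $[X^w]_w=\prod_{i<j,\,w_i>w_j}(y_j-y_i)$, as claimed. The only delicate point is bookkeeping the sign convention $y_i=-c_1(\C e_i)$, which flips the usual identification of $T$-characters with elements of $\La$; once this is fixed, the argument is a direct application of the equivariant self-intersection formula together with the explicit description of the $\Bo^-$-orbit through $(A_w,B_w)$.
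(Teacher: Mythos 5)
Your proof is correct and follows essentially the same route as the paper: both identify $[X^w]_w$ with the equivariant top Chern class of the normal bundle $N_{X^w/X}$ at the fixed point, describe $T_{(A_w,B_w)}X^w$ via the open $\Bo^-$-orbit, and read off the normal weights $y_j-y_i$ for inversions $i<j$, $w_i>w_j$. The only cosmetic difference is that you organize the tangent space by the $\Hom$-decomposition of $T\Fl(a,b;n)$ (which splits as a $T$-module at the fixed point), whereas the paper writes down the explicit one-parameter deformations $\varphi_{i,j}$; the weight bookkeeping is identical.
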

\begin{proof}
  It follows from \cite[Ex.~3.3.2]{fulton:intersection} that $[X^w]_w$
  is the top Chern class of the fiber of the normal bundle of $X^w$ in
  $X$ over the point $(A_w,B_w)$.  For $i \neq j$ we define maps
  $\gamma_{i,j} : \C \to \GL_n(\C)$ and $\varphi_{i,j} : \C \to X$ by
  $\gamma_{i,j}(s).e_k = e_k + s\, \delta_{jk}\, e_i$ for all $k$, and
  $\varphi_{i,j}(s) = \gamma_{i,j}(s).(A_w,B_w)$.  Here $\delta_{jk}$
  is Kronecker's delta.  The tangent space $T_w X$ of $X$ at
  $(A_w,B_w)$ has the basis $\{\varphi'_{i,j}(0) \mid w_i > w_j \}$,
  and $T_w X^w$ has basis $\{\varphi'_{i,j}(0) \mid w_i > w_j \text{
    and } i>j \}$.  It follows that the normal space $T_wX / T_w X^w$
  has basis $\{\varphi'_{i,j}(0) \mid w_i>w_j \text{ and } i < j \}$.
  Since the torus $T$ acts on $T_w X$ by $t.\varphi'_{i,j}(0) =
  t_i/t_j\, \varphi'_{i,j}(0)$, we obtain $c_1(\C\, \varphi'_{i,j}(0))
  = y_j-y_i$.  This proves the lemma.
\end{proof}

\begin{lemma}\label{lem:samecoef}
  Let $\Omega \in H^*_T(X;R)$, let $u$ be a 012-string for $X$, and
  consider the expansion $\Omega \cdot [X^u] = \sum_w d_w [X^w]$ where
  $d_w \in R[y]$.  Then $d_w$ is non-zero only if $u \leq w$, and we
  have $d_u = \Omega_u$.
\end{lemma}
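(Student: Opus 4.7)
The plan is to exploit the triangularity of equivariant Schubert restrictions at $T$-fixed points. Two inputs are needed. First, for any 012-strings $w,v$ for $X$, the restriction $[X^w]_v \in R[y]$ vanishes unless $(A_v,B_v) \in X^w$, which (by the Bruhat description of $T$-fixed points in opposite Schubert varieties) happens if and only if $w \leq v$. This is a standard consequence of the fact that restriction to a fixed point not lying on a $T$-invariant closed subvariety annihilates its equivariant class. Second, by Lemma~\ref{lem:extreme} the diagonal value $[X^w]_w = \prod_{i<j:\,w_i>w_j}(y_j-y_i)$ is a nonzero element of the integral domain $R[y]$.

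To establish the vanishing $d_w=0$ whenever $u\not\leq w$, I argue by contradiction. Choose such a $w$ that is minimal in Bruhat order, and apply the restriction homomorphism at $(A_w,B_w)$ to the identity $\Omega \cdot [X^u] = \sum_{w'} d_{w'}[X^{w'}]$, obtaining
\[
\Omega_w \cdot [X^u]_w \ = \ \sum_{w'} d_{w'}\, [X^{w'}]_w \,.
\]
The left-hand side vanishes because $u \not\leq w$ forces $[X^u]_w=0$. On the right-hand side, triangularity removes all terms with $w' \not\leq w$, and the minimality of $w$ forces every surviving term with $w' < w$ and $d_{w'}\neq 0$ to satisfy $u \leq w' < w$, contradicting $u\not\leq w$. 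Only the term $w'=w$ survives, giving $d_w\cdot [X^w]_w = 0$, and since $[X^w]_w$ is a nonzero element of the domain $R[y]$ this forces $d_w = 0$.

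Once vanishing is in hand, I restrict the same identity at $(A_u,B_u)$. The condition $[X^w]_u \neq 0$ requires $w \leq u$, while the first part just established that $d_w \neq 0$ requires $u \leq w$, so only $w=u$ contributes, leaving $\Omega_u \cdot [X^u]_u = d_u\, [X^u]_u$. Cancelling the nonzero divisor $[X^u]_u$ in $R[y]$ yields $d_u = \Omega_u$.

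I do not anticipate a real obstacle: the argument is the standard triangularity/localization bootstrap in equivariant cohomology, and the only slightly subtle point is the vanishing $[X^w]_v = 0$ for $v \not\geq w$, which is built into the geometry of opposite Schubert cells and the functoriality of pullback along inclusions of fixed points.
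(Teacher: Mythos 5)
Your proof is correct, but it takes a genuinely different route from the paper's. The paper extracts the coefficient directly via the duality pairing: $d_w = \int_X \Omega\cdot[X^u]\cdot[X_w] = \int_X \Omega\cdot[X^u\cap X_w]$, so the vanishing for $u\not\leq w$ is immediate from the emptiness of the Richardson variety $X^u\cap X_w$, and the identity $d_u=\Omega_u$ follows because $X^u\cap X_u$ is the reduced point $(A_u,B_u)$ --- no induction and no localization are needed. You instead run the standard localization bootstrap: restrict the expansion to the fixed points $(A_w,B_w)$, use the upper-triangularity $[X^{w'}]_w = 0$ for $w'\not\leq w$, take a Bruhat-minimal counterexample, and cancel the nonzero diagonal entry $[X^w]_w$ (supplied by Lemma~\ref{lem:extreme}) in the domain $R[y]$. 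Your argument is sound: the minimal-counterexample step correctly kills all $w'<w$ terms by transitivity of the Bruhat order, and the final cancellation at $w=u$ is legitimate since $[X^u]_u$ is a non-zero-divisor. The trade-off is that the paper's proof leans on the geometry of Richardson varieties and the transversality giving $[X^u]\cdot[X_u]=[(A_u,B_u)]$, whereas yours leans on the injectivity-flavored facts about restriction to fixed points together with the explicit nonvanishing of the diagonal restriction; both sets of inputs are standard, and the two arguments are the two classical ways of establishing this triangularity.
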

\begin{proof}
  We have $d_w = \int_X \Omega \cdot [X^u] \cdot [X_w] = \int_X \Omega
  \cdot [X^u \cap X_w]$, and the intersection $X^u \cap X_w$ is
  non-empty if and only if $u \leq w$.  The last identity follows
  because $d_u = \int_X \Omega \cdot [(A_u,B_u)] = \Omega_u$.
\end{proof}

Lemma~\ref{lem:samecoef} implies that we have $C^w_{u,w} = [X^u]_w$
for arbitrary 012-strings $u$ and $w$ for $X$.  In particular, the
identity (\ref{eqn:extreme}) follows from Kostant and Kumar's formula
for $[X^w]_w$.  A formula for the more general restrictions $[X^u]_w$
has been proved by Andersen, Jantzen, and Soergel
\cite[App.~D]{andersen.jantzen.ea:representations} and by Billey
\cite{billey:kostant}.

Let $D_1 = X^{\la(1)}$ and $D_2 = X^{\la(2)}$ be the Schubert divisors
on $X$, defined by the 012-strings $\la(1) =
(0^{a-1},1,0,1^{b-a-1},2^{n-b})$ and $\la(2) =
(0^a,1^{b-a-1},2,1,2^{n-b-1})$.  We will work with the class $\cD =
(\delta_0-\delta_1)\,[D_1] + (\delta_1-\delta_2)\,[D_2] \in
H^*_T(X;R)$ that encodes both of these divisors.  We also set $C_0 =
C_{(0^a,1^{b-a},2^{n-b})} \in R[y]$.

\begin{lemma}\label{lem:chevalley}
  For any 012-string $u$ for $X$ we have
  \[
  \cD \cdot [X^u] = (C_u-C_0)\,[X^u] + \sum_{u\to u'}
  \delta\left(\frac{u}{u'}\right) [X^{u'}] \,.
  \]
\end{lemma}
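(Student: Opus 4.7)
The plan is to expand $\cD \cdot [X^u]$ in the Schubert basis of $H^*_T(X;R)$, identify its support by a degree count combined with Lemma~\ref{lem:samecoef}, and determine each coefficient. Writing $\cD \cdot [X^u] = \sum_w d_w [X^w]$ with $d_w \in R[y]$, Lemma~\ref{lem:samecoef} gives $d_w = 0$ unless $u \leq w$, and since $\cD$ has cohomological degree one each $d_w$ is homogeneous of $y$-degree $1 + \ell(u) - \ell(w)$. Thus only $w = u$ (with $d_u \in R[y]$ linear in the $y_i$) and covers $w$ with $u \to w$ (with $d_w \in R$ constant in the $y_i$) can contribute.

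For each off-diagonal $d_{u'}$, the $y$-independence means $d_{u'}$ coincides with the coefficient of $[X^{u'}]$ in the non-equivariant product $\cD|_{y=0} \cdot [X^u]$. Monk's formula, applied separately to the Schubert divisors $[D_1] = [X^{s_a}]$ and $[D_2] = [X^{s_b}]$, states that the coefficient of $[X^{u'}]$ in $[D_j] \cdot [X^u]$ equals $1$ exactly when the cover $u \to u'$ is produced by a transposition $t_{i',j'}$ in $S_n$ (acting on minimal coset representatives) with $i' \leq \ell_j < j'$, where $\ell_1 = a$ and $\ell_2 = b$. A case analysis on the three cover types, using that $i'$ lies in the block $[1,a]$, $[a+1,b]$, or $[b+1,n]$ of the parabolic decomposition according to whether $u_i$ is $0$, $1$, or $2$ (and analogously for $j'$), then shows: type~(A) contributes $\delta_0 - \delta_1$, type~(B) contributes $(\delta_0-\delta_1) + (\delta_1-\delta_2) = \delta_0 - \delta_2$, and type~(C) contributes $\delta_1 - \delta_2$. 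Each matches $\delta(u/u')$.

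For the diagonal coefficient $d_u = \cD_u$, I would introduce the auxiliary class
\[
\cL := \delta_0\, c_1^T(\cA^\vee) + \delta_1\, c_1^T((\cB/\cA)^\vee) + \delta_2\, c_1^T((\C^n/\cB)^\vee) \in H^2_T(X;R),
\]
where $\cA \subset \cB$ is the tautological flag on $X$. At any $T$-fixed point $v$, the tautological bundles split as direct sums of the $T$-lines $\C e_i$ according to the values of $v_i$, giving $\cL_v = \sum_i \delta_{v_i} y_i = C_v$. Non-equivariantly, the identifications $c_1(\cA^\vee) = [D_1]$ and $c_1(\cB^\vee) = [D_2]$ (by pullback of Schubert divisors from $\Gr(a,n)$ and $\Gr(b,n)$), together with $c_1((\cB/\cA)^\vee) = [D_2] - [D_1]$ and $c_1((\C^n/\cB)^\vee) = -[D_2]$, yield $\cL|_{y=0} = \cD|_{y=0}$. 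Expanding $\cL - \cD = \sum_w a_w [X^w]$, the same degree argument restricts the possibly nonzero terms to $a_{u^0}$ (linear in $y$) and $a_{\la(j)} \in R$ for $j=1,2$; specializing $y=0$ forces $a_{\la(j)} = 0$, and evaluating at $u^0 = (0^a, 1^{b-a}, 2^{n-b})$ — with $\cD_{u^0} = 0$ because $\la(j) \not\leq u^0$ implies $[D_j]_{u^0} = 0$ — fixes $a_{u^0} = C_0$. Hence $\cL - \cD = C_0 \cdot 1$, so $\cD_u = C_u - C_0$.

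The main technical obstacle is the case analysis for the off-diagonal coefficients, which requires translating the three cover types of the Bruhat order on 012-strings into straddling conditions for Monk's formula with respect to the simple reflections $s_a$ and $s_b$. The identification of tautological Chern classes with Schubert divisors needed in the diagonal step is standard but also warrants explicit verification.
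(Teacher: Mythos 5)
Your proof is correct and follows the same overall route as the paper: the support and degree of each coefficient are pinned down by Lemma~\ref{lem:samecoef} together with the grading, the off-diagonal coefficients are read off from the classical Monk/Chevalley formula applied to $[D_1]$ and $[D_2]$ separately, and the diagonal coefficient is computed from the tautological bundles. The one point of divergence is the evaluation of $\cD_u$: the paper identifies each $[D_j]$ \emph{equivariantly} as a difference of equivariant first Chern classes (via the zero-locus description $D_1 = Z(\bigwedge^a \cA \to \bigwedge^a \C^a_X)$, etc.) and restricts directly to the fixed point, whereas you use only the non-equivariant identifications $c_1(\cA^\vee)=[D_1]$ and $c_1(\cB^\vee)=[D_2]$ and then recover the equivariant statement by showing that $\cL-\cD$ is a constant multiple of $1$, using the support/degree argument once more together with a single fixed-point evaluation at $u^0$. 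Both are valid and land on the same identity $\cD_u = C_u - C_0$ (which the paper attributes to Kostant--Kumar); your version trades the equivariant refinement of the Chern-class identity, which is standard but does require knowing the zero scheme is reduced and $T$-stable, for one extra comparison at $y=0$.
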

\begin{proof}
  The equivariant ring $H^*_T(X;R)$ has a natural grading by complex
  codimension given by $\deg\, [X^w] = \ell(w)$, $\deg(y_i)=1$, and
  $\deg(\delta_j)=0$.  It therefore follows from
  Lemma~\ref{lem:samecoef} that, if the coefficient of $[X^w]$ is
  non-zero in the expansion of $\cD \cdot [X^u]$, then we have either
  $u=w$ or $u\to w$.  Recall that the classical Monk/Chevalley formula
  \cite{chevalley:decompositions, monk:geometry} states that the
  product $[D_1] \cdot [X^u]$ in the ordinary cohomology ring
  $H^*(X;\Z)$ is equal to the sum of all classes $[X^w]$ for which $u
  \to w$ and $\delta(\frac{u}{w}) \in \{\delta_0-\delta_1,
  \delta_0-\delta_2\}$, and the product $[D_2] \cdot [X^u]$ is the sum
  of all classes $[X^w]$ for which $u \to w$ and $\delta(\frac{u}{w})
  \in \{\delta_0-\delta_2, \delta_1-\delta_2\}$.  It follows that the
  coefficient of $[X^w]$ in $\cD \cdot [X^u]$ is equal to
  $\delta(\frac{u}{w})$ whenever $u \to w$.  It remains to show that
  $\cD_u = C_u-C_0$.  This has been proved in higher generality by
  Kostant and Kumar \cite[Prop.~4.24(c)]{kostant.kumar:nil*1}.  For
  completeness we give the following argument from \cite[\S
  8]{buch.mihalcea:curve}.

  For $m\leq n$ we set $\C^m = \Span_\C\{e_1,\dots,e_m\} \subset \C^n$
  and $\C^m_X = \C^m \times X$.  There is a natural sequence of vector
  bundles over $X$ given by $\cA \subset \cB \subset \C^n_X \to \C^b_X
  \to \C^a_X$, where $\cA$ and $\cB$ are the tautological subbundles
  on $X$ and the last two maps are the projections to the first $b$
  and $a$ coordinates in $\C^n$.  The Schubert divisors on $X$ are the
  zero sections $D_1 = Z(\bigwedge^a \cA \to \bigwedge^a \C^a_X)$ and
  $D_2 = Z(\bigwedge^b \cB \to \bigwedge^b \C^b_X)$.  It follows that
  \[
  \begin{split}
    \cD_u 
    &= (\delta_0-\delta_1)\,\left(c_1(\C^a_X) - c_1(\cA)\right)_u
       + (\delta_1-\delta_2)\,\left(c_1(\C^b_X) - c_1(\cB)\right)_u \\
    &= \delta_0 \big( c_1(\C^a) - c_1(A_u) \big) + 
       \delta_1 \big( c_1(\C^b/\C^a) - c_1(B_u/A_u) \big) \\
    & \ \ \ \ \ + \delta_2 \big( c_1(\C^n/\C^b) - c_1(\C^n/B_u) \big) \\
    &= C_u - C_0 \,.
  \end{split}
  \]
  This completes the proof.
\end{proof}

\begin{proof}[Proof of Theorem~\ref{thm:recurse}]
  The identity (\ref{eqn:extreme}) follows from
  Lemma~\ref{lem:extreme} and Lemma~\ref{lem:samecoef}.  To prove
  (\ref{eqn:recurse}), we use Lemma~\ref{lem:chevalley} and the
  equivariant structure constants of $X$ to expand both sides of the
  associativity relation $(\cD \cdot [X^u]) \cdot [X^v] = \cD \cdot
  ([X^u] \cdot [X^v])$ in the basis of Schubert classes.  Since the
  coefficient of $[X^w]$ in both sides is the same, we obtain the
  identity
  \begin{equation}\label{eqn:rec1}
    (C_u-C_w)\, C^w_{u,v} = 
    \sum_{w' \to w} \delta\left(\frac{w'}{w}\right) C^{w'}_{u,v}
    - \sum_{u\to u'} \delta\left(\frac{u}{u'}\right) C^w_{u',v} \,.
  \end{equation}
  Similarly, the relation $[X^u] \cdot ([X^v] \cdot \cD) =
  ([X^u]\cdot [X^v]) \cdot \cD$ implies the identity
  \begin{equation}\label{eqn:rec2}
    (C_v-C_w)\, C^w_{u,v} =
    \sum_{w'\to w} \delta\left(\frac{w'}{w}\right) C^{w'}_{u,v}
    - \sum_{v\to v'} \delta\left(\frac{v}{v'}\right) C^w_{u,v'} \,.
  \end{equation}
  Finally, the identity (\ref{eqn:recurse}) is obtained by multiplying
  both sides of (\ref{eqn:rec1}) with $\zeta^{11}$, multiplying both
  sides of (\ref{eqn:rec2}) with $\zeta^7$, and adding the resulting
  equations.

  We next observe that $C_u-C_w$ is non-zero whenever $u \neq w$, and
  $C_v-C_w$ is non-zero whenever $v \neq w$.  Since both $C_u-C_w$ and
  $C_v-C_w$ are elements of the polynomial ring
  $\Z[\delta_0,\delta_1,\delta_2,y_1,\dots,y_n]$, and the powers
  $\zeta^{11}$ and $\zeta^7$ are linearly independent over this ring,
  it follows that the factor $(C_u \zeta^{11} + C_v \zeta^7 + C_w
  \zeta^3) = (C_u-C_w) \zeta^{11} + (C_v-C_w) \zeta^7$ of
  (\ref{eqn:recurse}) is non-zero whenever $u\neq w$ or $v\neq w$.

  We finally prove that the equivariant Schubert structure constants
  of $X$ are uniquely determined by (\ref{eqn:extreme}) and
  (\ref{eqn:recurse}) by descending induction on $\deg(C^w_{u,v}) =
  \ell(u)+\ell(v)-\ell(w)$.  The basis step is vacuous because
  $\deg(C^w_{u,v}) \leq 2 \dim(X)$.  For the inductive step, let $u$,
  $v$, and $w$ be given.  If $u=v=w$, then the constant $C^w_{u,v}$ is
  determined by (\ref{eqn:extreme}).  Otherwise notice that all
  structure constants appearing on the right side of equation
  (\ref{eqn:recurse}) have degree equal to $\deg(C^w_{u,v})+1$, so
  these constants are uniquely determined by the induction hypothesis.
  Since $R[y]$ is a domain and $(C_u \zeta^{11} + C_v \zeta^7 + C_w
  \zeta^3) \neq 0$, we deduce that $C^w_{u,v}$ is uniquely determined
  as well.
\end{proof}

\begin{remark}
  The real scalar product of two vectors $x,y \in \C$ is defined by
  $(x,y) = \Re(x \overline{y})$, and this scalar product has an
  $\R$-linear extension to the ring $R[y]$.  By taking the scalar
  product of both sides of equation (\ref{eqn:recurse}) with the
  vector $\zeta^{10}$, we recover the identity (\ref{eqn:rec1})
  associated to the relation $(\cD \cdot[X^u])\cdot[X^v] =
  \cD\cdot([X^u]\cdot[X^v])$, and by taking the scalar product with
  $\zeta^8$, we recover the identity (\ref{eqn:rec2}) associated to
  the relation $[X^u]\cdot([X^v]\cdot\cD) =
  ([X^u]\cdot[X^v])\cdot\cD$.  One may check that the scalar product
  of equation (\ref{eqn:recurse}) with $1 \in \C$ results in an
  identity associated to the relation $([X^u]\cdot\cD)\cdot[X^v] =
  [X^u]\cdot(\cD\cdot[X^v])$.
\end{remark}


\section{Mutations of puzzles}\label{sec:mutations}

\subsection{Puzzles}\label{ssec:puzzles}

Define a {\em puzzle\/} to be any hexagon made from puzzle pieces with
matching side labels, such that all boundary labels are simple.  In
contrast to the conventions used in \cite{knutson.tao:puzzles} we
allow all puzzle pieces to be rotated arbitrarily, including
equivariant pieces.  This means that rotations of puzzles are again
puzzles, which will be exploited to simplify constructions and proofs.
We shall work only with puzzles whose edges are parallel to the sides
of a right-side-up triangle.  Define the {\em dual\/} of a puzzle to
be the result of reflecting it in a vertical line and applying the
following substitution to its labels:
\[
0 \mapsto 2 \ , \ 
1 \mapsto 1 \ , \ 
2 \mapsto 0 \ , \ 
3 \mapsto 4 \ , \ 
4 \mapsto 3 \ , \ 
5 \mapsto 5 \ , \ 
6 \mapsto 7 \ , \ 
7 \mapsto 6 \,.
\]
For example, the following two puzzles are dual to each other.
\[
\pic{.7}{dualex1a} \ \ \ \ \pic{.7}{dualex2a}
\]

The line segments that make up the boundary of a puzzle are called
{\em border segments}.  We allow border segments to have length zero.
In particular, the shape of a puzzle may be an equilateral triangle.

A {\em gashed puzzle\/} is a hexagon made of puzzle pieces, not
necessarily with matching side labels, but still with simple boundary
labels.  The puzzle edges where the labels do not match are called
{\em gashes}.  We think about gashes as edges that have two labels,
one on each side.  We also allow gashes on the boundary of a gashed
puzzle, by artificially imposing an extra label on the far side of a
boundary edge.  The following gashed puzzle has two gashes.
\[
\pic{.7}{propex-1}
\]
We will use the textual notation {\psfrag{0}{$b$} \psfrag{1}{$a$}
  $\ds\frac{\,a\,}{\,b\,} = \raisebox{-1.5mm}{\pic{.6}{gashn}}$, $b/a
  = \raisebox{-3.5mm}{\pic{.6}{gashse}}$, and $a\backslash b =
  \raisebox{-3.5mm}{\pic{.6}{gashsw}}$} for gashes of the three
possible orientations.  For example, the two gashes in the above
example are denoted $5/0$ and $0\backslash 2$.

\subsection{Introduction to mutations}\label{ssec:mut_intro}

The main new combinatorial construction in this paper is an algorithm
called {\em
  mutation of puzzles}.  Before we state the precise definition, we
will give a more informal introduction by working through some
examples.  Consider the following gashed puzzle, where both gashes are
located on the south-west border segment.
\[
\pic{.7}{propex-0}
\]
Such a pair of gashes can be introduced on an ungashed puzzle if one
wishes to change the labels of a border segment.  For example, the
bottom gash $0\backslash 2$ indicates that the label 2 should be
changed to 0.  We will always make such a change by replacing the
puzzle piece that contributes the unwanted label of the gash with a
new piece of the same shape, and this new piece must be chosen such
that only one new gash is created by the replacement.  It is a
fundamental observation that there is always {\em at most one\/}
puzzle piece that satisfies this requirement.  In our example we must
replace the puzzle piece \raisebox{-4mm}{\pic{.6}{d142}} with either
\raisebox{-4mm}{\pic{.6}{replace1}} or
\raisebox{-4mm}{\pic{.6}{replace2}}, where the question marks can be
arbitrary labels, and the only possible choice is
\raisebox{-4mm}{\pic{.6}{d740}}.  The replacement makes the bottom
gash move to the top side of the replaced puzzle piece.  We say that
the gash has been {\em propagated}.  After the gash has been moved,
the process can be repeated to propagate it one more step.  However,
after two propagations have been carried out, no further propagations
are possible.  The steps are displayed in the following sequence of
gashed puzzles, where we have also indicated the direction in which
each gash is supposed to move.
\[
\pic{.7}{propex-2}
\ \ \ \ 
\pic{.7}{propex-3}
\ \ \ \ 
\pic{.7}{propex-4}
\]
Propagation of the second gash gives the following continuation.
\[
\pic{.7}{propex-5}
\ \ \ \ 
\pic{.7}{propex-6}
\ \ \ \ 
\pic{.7}{propex-7}
\]
At this point both gashes are stuck at two sides of the same puzzle
piece.  We will show in Theorem~\ref{thm:mutate} below that this is no
coincidence.  At this time we change the labels of the gashed edges to
what the gashes suggest.  The result is the following {\em flawed
  puzzle}, where one of the small triangles is a {\em temporary puzzle
  piece}.  A temporary puzzle piece is analogous to an empty box in a
Young tableau during a sequence of jeu de taquin slides; in fact, it
is also possible to regard the temporary piece as a hole in the puzzle
where no valid puzzle piece will fit.
\[
\pic{.7}{propex-8}
\]

We would like to end up with a valid puzzle made from the puzzle
pieces listed in Section~\ref{sec:equiv}, which means that we have to
get rid of the temporary puzzle piece.  A temporary puzzle piece can
be {\em resolved\/} in three different ways, each of which preserves
one of its sides and replaces the other two sides with gashes.  This
is done by replacing the temporary piece with a valid piece that has
the same label on the side that is preserved.  When we work with
two-step puzzles, this valid piece is the unique puzzle piece whose
largest label is the preserved label.

In our example, if we choose to preserve the bottom side with label 7
of the temporary piece, then we replace this piece with
\raisebox{-4.5mm}{\pic{.6}{u740}}.  The resulting gashes can be
propagated as follows.
\[
\pic{.7}{propex-18}
\ \ \ \ 
\pic{.7}{propex-19}
\ \ \ \ 
\pic{.7}{propex-21}
\]

On the other hand, if we preserve the right side with label 3 of the
temporary puzzle piece, then the temporary piece is replaced with
\raisebox{-4.5mm}{\pic{.6}{u103}}, and by propagating the resulting
gashes we recover the puzzle that we started with.

Finally, if we choose to preserve the left side with label 5 of the
temporary piece, then this piece is replaced with
\raisebox{-4.5mm}{\pic{.6}{u052}}, and the resulting gashes can be
propagated as follows (skipping some steps).
\[
\pic{.7}{propex-9}
\ \ \ \ 
\pic{.7}{propex-16}
\ \ \ \ 
\pic{.7}{propex-17}
\]
The middle picture shows the gashes at the positions where they get
stuck, which is on two sides of an equivariant puzzle piece.  We can
change the labels of these edges to what the gashes suggest by
replacing the equivariant piece with a rhombus made from two
triangular puzzle pieces.  Following \cite{knutson.tao:puzzles}, we
call this rhombus a {\em scab}, and it has been colored light blue to
{\em mark\/} its position.  This allows the mutation to be inverted.

\subsection{Propagation of gashes}\label{ssec:propagate}

We now give a detailed definition of the mutation algorithm, starting
with several related concepts.
Define a {\em directed gash\/} to be a gash together with a direction
perpendicular to its edge.  In pictures we will indicate the direction
with a gray arrow.  The label that the direction points to is called
the {\em original label\/} and the other label is called the {\em new
  label}.  Assume that a directed gash $g$ points to a puzzle piece
$q$ that contributes the original label of $g$, and that no other
gashes are located on the sides of $q$.  Assume also that there exists
a puzzle piece $q'$ of the same shape as $q$, such that $q'$ has the
new label of $g$ on its side corresponding to $g$, and another label
of $q'$ is equal to the label of $q$ on the same side.  In this case
the gash $g$ can be {\em propagated\/} by replacing $q$ with $q'$.
This replaces the gash $g$ with its new label and creates a new gash
on a different side of $q'$.  The following are examples of
propagations.
\[
\pic{.75}{gp1} \raisebox{6mm}{$\ \mapsto\ $} \pic{.75}{gp2} 
\hspace{20mm}
\pic{.75}{gp3} \raisebox{6mm}{$\ \mapsto\ $} \pic{.75}{gp4}
\]
If the puzzle piece $q$ is equivariant, then the only possible way to
propagate $g$ is to move this gash to the opposite side of $q$; this
follows because opposite sides of any equivariant piece have the same
label.  On the other hand, if $q$ is a triangular puzzle piece, then
the following lemma implies that $g$ can be propagated in at most one
way.

\begin{lemma}\label{lem:onebad}
  Let $a,b,c,x,y,z$ be labels such that $a \neq x$, $b \neq y$, and $c
  \neq z$.  Then at least one of the following triangles is not a
  valid puzzle piece.\vspace{-2mm}
  \[ 
  p = 
  \raisebox{-4mm}{
    \psfrag{a}{$x$}
    \psfrag{e}{$b$}
    \psfrag{c}{$c$}
    \pic{.7}{uaec2} 
  }
  \ \ \ ; \ \ \ 
  q = 
  \raisebox{-4mm}{
    \psfrag{a}{$a$}
    \psfrag{e}{$y$}
    \psfrag{c}{$c$}
    \pic{.7}{uaec2} 
  }
  \ \ \ ; \ \ \ 
  r = 
  \raisebox{-4mm}{
    \psfrag{a}{$a$}
    \psfrag{e}{$b$}
    \psfrag{c}{$z$}
    \pic{.7}{uaec2} 
  }
  \]
\end{lemma}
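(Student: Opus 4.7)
The plan is to prove the lemma by contradiction, combining the Remark in Section~\ref{ssec:equiv_puzzles} (a triangular puzzle piece is uniquely determined by the labels on any two of its sides) with a finite case check against the explicit list of 8 up-pointing triangular puzzle pieces given in that section. Assume for contradiction that all of $p$, $q$, $r$ are valid. Then $p$ and $q$ are two distinct valid up-triangles sharing bottom label $c$ but disagreeing on both other sides (since $x \neq a$ and $b \neq y$); symmetrically $p$ and $r$ share the top label $b$, and $q$ and $r$ share the left label $a$, while each such pair disagrees on the two remaining sides.

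The core step is then to enumerate, for each possible side label $\ell$, the valid up-triangles having $\ell$ on a prescribed side. Because only 8 basic pieces appear (each contributing up to 3 rotations), these per-side lists are short and can be read off directly from Section~\ref{ssec:equiv_puzzles}. The decompositions of the composed labels ($3 = 10$, $4 = 21$, $5 = 20$, $6 = 2(10)$, $7 = (21)0$) cut the possibilities further: each composed label appears on only one basic piece up to rotation. Given this data, I would fix $c$, read off the valid pairs $(x, b)$ and $(a, y)$ with $(x, b, c)$ and $(a, y, c)$ valid and $x \neq a$, $b \neq y$, and then verify that no valid triangle of the form $r = (a, b, z)$ with $z \neq c$ exists for any such choice. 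The cyclic symmetry under rotating $(a, b, c) \mapsto (b, c, a)$ (and simultaneously $(x, y, z) \mapsto (y, z, x)$), together with the duality of Section~\ref{ssec:puzzles}, reduces the number of distinct bottom labels that must be checked in detail.

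The main obstacle is not conceptual but pure bookkeeping: matching the textual labels $0, \dots, 7$ against their pictorial representations and keeping all rotations of each basic piece consistent in a single convention for (left, top, bottom). A possibly cleaner route, which I would explore first, is to exploit the concatenation structure underlying the list (each composed label encodes a word in simple labels, and the three labels of any valid up-triangle satisfy a specific concatenation identity going around the triangle). If an invariant extracted from this structure is already incompatible with the hybrid configuration of $p, q, r$, then the enumeration collapses into a one-line argument; otherwise, the direct case check is short enough to carry out by hand, and this is the route I expect to take in the worst case.
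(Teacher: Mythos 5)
Your proposal is correct in substance, and indeed the paper's proof opens by conceding exactly your point: ``Since there are finitely many puzzle pieces, this lemma can be checked case by case.'' But the paper then deliberately takes a different route, proving the lemma for the fully general recursive definition of puzzle pieces from \cite{knutson:conjectural}, where labels are arbitrary binary trees built from $\N$ by the rule that $c=(b,a)$ is a label whenever $\max(a)<\min(b)$, and a valid triangle is either $(x,x,x)$ with $x\in\N$ or has $c=(b,a)$. The argument there is structural: assuming all of $p,q,r$ are valid, one first disposes of the case where two of the labels $a,b,c$ are equal simple labels, then shows (by comparing depths of labels as rooted binary trees) that exactly one of the identities $x=(c,b)$, $y=(a,c)$, $z=(b,a)$ can hold, and finally derives a contradiction from the resulting order relations such as $a<b$. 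This is essentially the ``concatenation identity around the triangle'' invariant you float at the end of your proposal as a possibly cleaner route --- so your instinct there points exactly at the paper's argument, though you do not carry it out. What your finite enumeration buys is a short, self-contained verification adequate for the two-step lemma as stated (and the bookkeeping is genuinely small: only $0,\dots,7$ with the decompositions $3=10$, $4=21$, $5=20$, $6=2(10)$, $7=(21)0$, and your rotation/duality reductions are legitimate since the list of pieces is closed under both). What the paper's approach buys is generality: the same statement is needed for three-step puzzles in \cite{buch:3step} and would hold for any $\GL(n)/P$ label system, where a case check is no longer feasible. The only caveat for your version is that it must actually be executed --- as written it is a plan rather than a completed check --- but since the paper itself certifies that the check succeeds, there is no gap in the logic.
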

\begin{proof}
  Since there are finitely many puzzle pieces, this lemma can be
  checked case by case.  However, the lemma is also true with the more
  general definition of puzzle pieces that Knutson gave in
  \cite{knutson:conjectural}.  We will prove the lemma in this
  generality.  In this proof we will therefore use the definition of
  puzzle pieces from \cite{knutson:conjectural}, which can be stated
  as follows.  Each $x \in \N$ is a label and we set $\min(x) =
  \max(x) = x$.  Whenever $a$ and $b$ are labels such that $\max(a) <
  \min(b)$, we declare that $c = (b,a)$ is also a label and set
  $\min(c)=\min(a)$ and $\max(c)=\max(b)$.  A triangular puzzle piece
  is any small triangle of the form
  \[
  {
    \psfrag{a}{$x$}
    \psfrag{e}{$x$}
    \psfrag{c}{$x$}
    \pic{.7}{uaec2}
  }
  \ \ \ \ \ \raisebox{5mm}{\text{ or }} \ \ \ \ \ 
  {
    \psfrag{a}{$a$}
    \psfrag{e}{$b$}
    \psfrag{c}{$c$}
    \pic{.7}{uaec2}
  }
  \]
  where $x \in \N$ and $c = (b,a)$ is a label.  For labels $a$ and $b$
  we will write $a < b$ if $\max(a) < \min(b)$.  The {\em depth\/} of
  a label is its depth as a rooted binary tree.

  Now assume that $a, b, c, x, y, z$ are labels in this sense and the
  triangles $p$, $q$, $r$ of the lemma are puzzle pieces.  If $z=a=b
  \in \N$, then we have either $y=(a,c)$ and $c<a=b$, or
  $c=(y,a)=(y,b)$.  In both cases the triangle $p$ is not a puzzle
  piece.  We may therefore assume that all three triangles are
  composed.

  We claim that exactly one of the identities $x=(c,b)$, $y=(a,c)$,
  $z=(b,a)$ is true.  If two of the identities are true, say $x=(c,b)$
  and $y=(a,c)$, then $b<c<a$ implies that $r$ is not a puzzle piece.
  On the other hand, if none of the identities are true, then we may
  assume without loss of generality that $c$ is the deepest of the
  labels $a,b,c$, and we must have $c=(b,x)=(y,a)$, contradicting that
  $a\neq x$.

  By the claim, we may assume that $x\neq(c,b)$ and $y\neq(a,c)$ and
  $z=(b,a)$.  In particular, we have $a<b$.  If $c=(b,x)$, then
  $c\neq(y,a)$, so we must have $a=(c,y)=((b,x),y)$, contradicting
  $a<b$.  It follows that $b=(x,c)$.  Since we have either $c=(y,a)$
  or $a=(c,y)$, we again deduce that $a<b$ is impossible.  This
  completes the proof.
\end{proof}

\subsection{Equivalence classes of gashes}\label{ssec:gashclass}

We will consider a {\em directed gash\/} as an object that exists
independently of its appearance in a puzzle.  In other words, a
directed gash consists of a direction and two labels, but not a
location.  We will use the textual notation $\ds\frac{\,a\,}{\,b\,}$,
$b/a$, and $a\backslash b$ also for directed gashes when the direction
of the gash is clear from the context.  Given directed gashes $g$ and
$h$, we say that $h$ is {\em immediately reachable\/} from $g$ if $h$
can be obtained by propagating $g$ across a single triangular puzzle
piece.  For example, the first propagation displayed in
Section~\ref{ssec:propagate} shows that the gash
\raisebox{-1mm}{\pic{.6}{gash035}} is immediately reachable from
\raisebox{-3mm}{\pic{.6}{gash112}}.  Notice that, if $h$ is obtained
from $g$ by a propagation that replaces a puzzle piece $q$ with
another piece $q'$, then $g$ is obtained from $h$ by a propagation
that replaces the 180 degree rotation of $q$ with the 180 degree
rotation of $q'$.  It follows that `immediately reachable' is a
symmetric relation.

Let $[g]$ denote the set of directed gashes that can be reached from
$g$ by a series of propagations, i.e.\ we have $h \in [g]$ if and only
if there exists a sequence $g = g_0, g_1, \dots, g_k = h$ such that
$g_i$ is immediately reachable from $g_{i-1}$ for each $i$.  The set
$[g]$ is called the {\em class\/} of $g$.  Define the {\em opposite
  gash\/} of $g$ to be the gash $\wh g$ obtained by interchanging the
labels of $g$ and keeping the direction.  For example,
$\raisebox{-1.7mm}{\pic{.6}{gash341}}$ and
$\raisebox{-1.7mm}{\pic{.6}{gash314}}$ are opposite gashes.  Notice
that $[\wh g] = \{ \wh h \mid h \in [g] \}$.  Similarly, if $g'$ is
obtained by rotating $g$ by some angle, then $[g']$ is obtained from
$[g]$ by rotating all elements by the same angle.  The directed gashes
$g$ and $h$ are said to be in {\em opposite classes\/} if $[\wh g] =
[h]$.  All gash classes that contain at least two gashes are rotations
of one of the following four classes or their opposites.
\[
\begin{split}
\left[ \raisebox{-4mm}{\pic{.7}{gash110}} \right] \ 
&= \ 
\left\{
\raisebox{-2mm}{\pic{.7}{gash030}} ,
\raisebox{-4mm}{\pic{.7}{gash110}} ,
\raisebox{-4mm}{\pic{.7}{gash165}} ,
\raisebox{-4mm}{\pic{.7}{gash213}} ,
\raisebox{-4mm}{\pic{.7}{gash245}} ,
\raisebox{-2mm}{\pic{.7}{gash346}}
\right\}
\\
\left[ \raisebox{-4mm}{\pic{.7}{gash120}} \right] \ 
&= \ 
\left\{
\raisebox{-2mm}{\pic{.7}{gash050}} ,
\raisebox{-2mm}{\pic{.7}{gash061}} ,
\raisebox{-4mm}{\pic{.7}{gash120}} ,
\raisebox{-4mm}{\pic{.7}{gash217}} ,
\raisebox{-4mm}{\pic{.7}{gash225}}
\right\}
\\
\left[ \raisebox{-4mm}{\pic{.7}{gash140}} \right] \ 
&= \ 
\left\{
\raisebox{-2mm}{\pic{.7}{gash070}} ,
\raisebox{-4mm}{\pic{.7}{gash140}} ,
\raisebox{-4mm}{\pic{.7}{gash223}} ,
\raisebox{-2mm}{\pic{.7}{gash326}}
\right\}
\\
\left[ \raisebox{-4mm}{\pic{.7}{gash121}} \right] \ 
&= \ 
\left\{
\raisebox{-4mm}{\pic{.7}{gash573}} ,
\raisebox{-2mm}{\pic{.7}{gash041}} ,
\raisebox{-2mm}{\pic{.7}{gash053}} ,
\raisebox{-4mm}{\pic{.7}{gash121}} ,
\raisebox{-4mm}{\pic{.7}{gash157}} ,
\raisebox{-4mm}{\pic{.7}{gash224}}
\right\}
\end{split}
\]
The directed gashes $g$ for which $[g] = \{g\}$ are the gashes that
can never be propagated.  These gashes are rotations of the following
seven gashes or their opposites.
\[
\pic{.7}{gash160} \ \ \ 
\pic{.7}{gash151} \ \ \ 
\pic{.7}{gash172} \ \ \ 
\pic{.7}{gash143} \ \ \ 
\pic{.7}{gash163} \ \ \ 
\pic{.7}{gash174} \ \ \ 
\pic{.7}{gash176}
\]
To see that the displayed gashes account for everything, notice that
none of them are rotations of (opposites of) each other, there are 28
of them, and $28 \cdot 12 = 336$ is the total number of directed
gashes.

\subsection{Flawed puzzles}\label{ssec:flawed}

A {\em flawed puzzle\/} is a puzzle that contains a flaw.  The flaw
can be of three different types: a {\em gash pair\/} on a border
segment, a {\em temporary puzzle piece}, or a {\em marked scab}.  All
types of flaws are represented in the following three puzzles, which
have already been encountered in Section~\ref{ssec:mut_intro}.
\[
\pic{.7}{flawex4} \ \ \ \
\pic{.7}{flawex5} \ \ \ \
\pic{.7}{flawex6}
\]
All boundary labels of a flawed puzzle must be simple.  Any flawed
puzzle has one or more {\em resolutions\/} where the flaw is replaced
with two directed gashes.  These resolutions are used to define the
{\em mutations\/} of the flawed puzzle.  As we will see, the gashes of
a resolution are always in opposite classes.  We proceed to discuss
each type of flaw in more detail.

\subsection{Gash pairs}\label{ssec:gash_pairs}

A {\em gash pair\/} is a pair of gashes located on a single border
segment of a puzzle.  If the puzzle is rotated so that the gashed
border segment is at the top of the puzzle, then the segment of edges
between the two gashed edges should have one of the following three
forms:
\[
\pic{.7}{gash01} \hmm{5} \pic{.7}{gash02} \hmm{5} \pic{.7}{gash12}
\]
In the first and third forms, the middle segment may consist of any
number of edges with the indicated labels, including zero.  Notice
that if $u$ is the sequence of labels on or above the border segment,
and $u'$ is the sequence of labels on or below the segment, then we
have $u \to u'$ with the notation of Section~\ref{sec:recurse}.

The gashes of a gash pair should be considered as directed towards the
interior of the puzzle.  A flawed puzzle containing a gash pair is
therefore its own resolution.  However, we usually omit the direction
of gash pairs in pictures.  Notice also that the gashes of a gash pair
are opposite to each other.

\subsection{Temporary puzzle pieces}\label{ssec:temporary}

According to Definition~\ref{def:temporary} below, a {\em temporary
  puzzle piece\/} is a small triangle from the following list.
Temporary puzzle pieces are colored yellow and may be rotated.
\[
\pic{.7}{u333} \hmm{5} \pic{.7}{u444} \hmm{5} \pic{.7}{u555} \hmm{5}
\pic{.7}{u645} \hmm{5} \pic{.7}{u753} \hmm{5} \pic{.7}{u176}
\]
A flawed puzzle containing a temporary piece is the same as a puzzle,
except that exactly one temporary puzzle piece is used together with
the valid puzzle pieces from Section~\ref{sec:equiv}.  The following
formal definition of temporary puzzle pieces and their resolutions is
valid also for three-step puzzles, see \cite{buch:3step}.

\begin{defn}\label{def:temporary}
  Let $x$, $y$, and $z$ be puzzle labels.  The triangle 
  \[
  \psfrag{a}{$x$}
  \psfrag{e}{$y$}
  \psfrag{c}{$z$}
  t \ = \ \ \ \raisebox{-5.7mm}{\pic{.7}{uaec2}} \ .
  \]
  with these labels is a {\em temporary puzzle piece\/} if and only if
  there exist puzzle labels $x', x'', y', y'', z', z''$ such that all
  of the following triangles are valid puzzle pieces:
  \[
  {
    \psfrag{a}{$x$}
    \psfrag{e}{$y'$}
    \psfrag{c}{$\!z''$}
    r_1 = \raisebox{-5.7mm}{\pic{.7}{uaec2}}
  } \ \ \ \ 
  {
    \psfrag{a}{$x'$}
    \psfrag{e}{$y''$}
    \psfrag{c}{$z$}
    r_2 = \raisebox{-5.7mm}{\pic{.7}{uaec2}}
  } \ \ \ \ 
  {
    \psfrag{a}{$x''$}
    \psfrag{e}{$y$}
    \psfrag{c}{$\!z'$}
    r_3 = \raisebox{-5.7mm}{\pic{.7}{uaec2}}
  } \ \ \ \ 
  {
    \psfrag{a}{$x'$}
    \psfrag{e}{$y'$}
    \psfrag{c}{$\!z'$}
    t' = \raisebox{-5.7mm}{\pic{.7}{uaec2}}
  } \ \ \ \ 
  {
    \psfrag{a}{$x''$}
    \psfrag{e}{$y''$}
    \psfrag{c}{$\!z''$}
    t'' = \raisebox{-5.7mm}{\pic{.7}{uaec2}}
  }
  \]
  In this case the {\em resolutions\/} of $t$ are obtained by
  replacing two of the sides of $t$ with gashes directed away from
  $t$, such that the original labels come from $t$ and the new labels
  come from $r_1$, $r_2$, or $r_3$:
  \[
  {
    \psfrag{x}{$x$}
    \psfrag{y}{$y$}
    \psfrag{yp}{$\,\,y'$}
    \psfrag{z}{$z$}
    \psfrag{zp}{$\,z''$}
    \pic{.9}{illres1}
  }
  \hspace{15mm}
  {
    \psfrag{x}{$x$}
    \psfrag{xp}{$x'$}
    \psfrag{y}{$y$}
    \psfrag{yp}{$y''$}
    \psfrag{z}{$z$}
    \pic{.9}{illres3}
  }
  \hspace{15mm}
  {
    \psfrag{x}{$x$}
    \psfrag{xp}{$\!\!x''$}
    \psfrag{y}{$y$}
    \psfrag{z}{$z$}
    \psfrag{zp}{$\,z'$}
    \pic{.9}{illres2}
  }
  \]
\end{defn}

We need the following properties and classification of the resolutions
of temporary puzzle pieces.

\begin{prop}\label{prop:illres}
  {\rm(a)} Let $x, y, z, x', y''$ be puzzle labels.  The gashed
  triangle
  \[
  \raisebox{9mm}{$\wtil t \ = \ \ \ $}
  \psfrag{x}{$x$}
  \psfrag{xp}{$x'$}
  \psfrag{y}{$y$}
  \psfrag{yp}{$y''$}
  \psfrag{z}{$z$}
  \pic{.9}{illres3}\vspace{-5mm}
  \]
  is a resolution of a temporary puzzle piece if and only
  if its two gashes are in opposite classes and the triangle $r_2$ of
  Definition~\ref{def:temporary} is a valid puzzle piece.

  {\rm(b)} Each temporary puzzle piece $t$ has exactly three
  resolutions.  In other words, the valid puzzle pieces $r_1$, $r_2$,
  $r_3$ of Definition~\ref{def:temporary} are uniquely determined by
  $t$.
\end{prop}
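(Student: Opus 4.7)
The plan is to prove both parts by a finite case analysis based on the explicit list of six temporary puzzle pieces (up to rotation) together with the classification of nontrivial gash classes from Section~\ref{ssec:gashclass}. The $120^\circ$ rotational symmetry of Definition~\ref{def:temporary} and the dual symmetry of Section~\ref{ssec:puzzles} will be used to reduce the case count substantially.

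For part (b), fix a temporary puzzle piece $t$ with labels $x$, $y$, $z$; by rotational symmetry it suffices to show that $r_2$ is uniquely determined. Because a triangular puzzle piece is determined by the labels on two of its sides, the question reduces to whether, among valid triangular pieces with bottom label $z$, exactly one choice of left label $x'$ and right label $y''$ admits completions to valid triangles $t'$ and $t''$ via some auxiliary $x''$, $y'$, $z'$, $z''$. This is a routine finite check against the six types of temporary pieces; the finiteness of the list of valid puzzle pieces from Section~\ref{ssec:equiv_puzzles} makes the verification mechanical.

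For the forward direction of part (a), the validity of $r_2$ is built into Definition~\ref{def:temporary}. To show that the two gashes of the resolution lie in opposite classes, I plan to exhibit an explicit propagation sequence: take the opposite of the left gash of $\tilde t$ and propagate it first across $t'$, using $r_1$ to supply the replacement (this is legal because $r_1$ and $t'$ agree on the side they share, so exactly one new gash is created), moving the gash onto the edge between $t'$ and $t''$; a second propagation across $t''$, using $r_3$ as the replacement, moves it onto the side carrying the right gash of $\tilde t$. This exhibits the right gash as a member of the class of the opposite of the left gash, which is the required opposite-classes condition.

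The backward direction of part (a) is the main technical obstacle. Here, assuming $r_2$ is valid and the two gashes of $\tilde t$ lie in opposite classes, we must produce labels $x''$, $y'$, $z'$, $z''$ extending $\tilde t$ to a full resolution. The plan is to proceed by enumeration: using the classification in Section~\ref{ssec:gashclass}, where each nontrivial gash class contains at most six elements, tabulate all pairs of gashes in opposite classes whose outer labels are compatible with some valid $r_2$, and check case by case against the six temporary piece types that such a pair forces the auxiliary labels and yields valid $r_1$, $r_3$, $t'$, $t''$. The primary challenge is the combinatorial bookkeeping; rotational and dual symmetry, together with the uniqueness from part (b), should collapse the enumeration to a manageable number of essentially distinct cases.
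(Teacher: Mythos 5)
Your proposal follows essentially the same route as the paper: the forward direction of (a) is proved by exhibiting explicit propagations that link each gash of the resolution, through a horizontal gash on the preserved side, via the auxiliary pieces $r_1,t'$ and $r_3,t''$, while the converse of (a) together with (b) is settled by a finite enumeration against the classified gash classes and the six temporary pieces (the paper records exactly this check in its table of temporary pieces and their resolutions). One caution on orientation bookkeeping in your two-step chain: the actual propagations replace $t'$ by $r_1$ and replace $r_3$ by $t''$, both emanating from the same north-directed gash $\frac{\,z'\,}{\,z''\,}$ on the preserved edge, and it is the symmetry of ``immediately reachable'' that then yields the chain from the opposite of the left gash to the right gash.
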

\begin{proof}
  Assume first that $\wtil t$ is a resolution of a temporary puzzle
  piece $t$, and let $z'$, $z''$, $r_1$, $r_3$, $t'$, and $t''$ be as
  in Definition~\ref{def:temporary}.  Let $g_1$ be the left directed
  gash of $\wtil t$ and let $g_2$ be the right gash.  The valid puzzle
  pieces $r_3$ and $t''$ then show that $[g_1] =
  [\ds\frac{\,z'\,}{\,z''\,}]$, while $r_1$ and $t'$ show that $[g_2]
  = [\ds\frac{\,z''\,}{\,z'\,}]$, with both horizontal gashes directed
  towards the north.  This shows that $g_1$ and $g_2$ are in opposite
  classes.
  
  To establish the rest of the proposition, one first checks that each
  triangle $t$ in the left column of Table~\ref{ssec:temporary} is a
  temporary puzzle piece.  In fact, if $x, y, z$ are the labels of $t$,
  and we let $x', x'', y', y'', z', z''$ be the unique labels such
  that $y',z'' \leq x$\,,\, $z',x'' \leq y$\,,\, $x',y'' \leq z$, and
  the triangles $r_1, r_2, r_3$ of Definition~\ref{def:temporary} are
  valid puzzle pieces, then $t'$ and $t''$ are also valid puzzle
  pieces.  This shows that $t$ is a temporary puzzle piece, and also
  that the three gashed triangles next to $t$ in
  Table~\ref{ssec:temporary} are resolutions of $t$.  On the other hand,
  by inspection of the gash classes displayed in
  Section~\ref{ssec:gashclass} it is easy to check that, up to
  rotation, all gashed triangles that satisfy the condition in part
  (a) are represented in the right column of Table~\ref{ssec:temporary}.
  The proposition follows from this.
\end{proof}

\begin{table}
  \begin{tabular}{|c|ccc|}
    \hline
    \multicolumn{4}{|l|}{
      \ {\sc Table \ref{ssec:temporary}.} \
      Temporary puzzle pieces and their resolutions.
      \raisebox{4.5mm}{\mbox{}}\raisebox{-2.5mm}{\mbox{}}
    }
    \\
    \hline
    \vspace{-2.5mm}\mbox{\hspace{19mm}}%
    &\mbox{\hspace{19mm}}&\hbox{\hspace{19mm}}&\\
    \pic{.7}{u333} &
    \ \ \ \pic{.7}{u333r2} &
    \pic{.7}{u333r0} &
    \pic{.7}{u333r1} \ \ \ 
    \\
    \vspace{-2.5mm}&&&\\
    \hline
    \vspace{-2.5mm}&&&\\
    \pic{.7}{u444} &
    \ \ \ \pic{.7}{u444r2} &
    \pic{.7}{u444r0} &
    \pic{.7}{u444r1} \ \ \ 
    \\
    \vspace{-2.5mm}&&&\\
    \hline
    \vspace{-2.5mm}&&&\\
    \pic{.7}{u555} &
    \ \ \ \pic{.7}{u555r2} &
    \pic{.7}{u555r0} &
    \pic{.7}{u555r1} \ \ \ 
    \\
    \vspace{-2.5mm}&&&\\
    \hline
    \vspace{-2.5mm}&&&\\
    \pic{.7}{u645} & 
    \ \ \ \pic{.7}{u645r2} & 
    \pic{.7}{u645r0} & 
    \pic{.7}{u645r1} \ \ \ 
    \\
    \vspace{-2.5mm}&&&\\
    \hline
    \vspace{-2.5mm}&&&\\
    \pic{.7}{u753} & 
    \ \ \ \pic{.7}{u753r2} & 
    \pic{.7}{u753r0} & 
    \pic{.7}{u753r1} \ \ \ 
    \\
    \vspace{-2.5mm}&&&\\
    \hline
    \vspace{-2.5mm}&&&\\
    \pic{.7}{u176} &
    \ \ \ \pic{.7}{u176r2} &
    \pic{.7}{u176r0} &
    \pic{.7}{u176r1} \ \ \ 
    \vspace{-2.5mm}\\ 
    &&& \\
    \hline
  \end{tabular}
\end{table}

\begin{remark}
  Given a temporary puzzle piece $t$, the valid puzzle pieces used to
  form the resolutions of $t$ are obtained by keeping one label $z$ of
  $t$ and replacing the two other labels with the unique integers
  smaller than or equal to $z$ such that the resulting triangle is a
  valid puzzle piece.  This is a coincidence that holds for two-step
  puzzles but not for three-step puzzles \cite{buch:3step}.
\end{remark}

\subsection{Scabs}\label{ssec:scabs}

A {\em scab\/} means a small rhombus consisting of two triangular
puzzle pieces with matching labels next to each other, so that the
rhombus is not invariant under 180 degree rotation.  In other words,
the triangular puzzle pieces are not rotations of each other.  Any
puzzle containing one or more scabs can be turned into a flawed puzzle
by marking one of the scabs.  Marked scabs are colored light blue in
pictures.

Let $s$ be a scab and assume that $q$ is an equivariant puzzle piece
of the same shape as $s$, such that two sides of $s$ and $q$ share the
same labels.  In this case the two labels that $s$ and $q$ agree about
must be on sides connected by an obtuse angle.  The gashed rhombus
$\wtil s$ resulting from replacing $s$ with $q$ is then called a {\em
  resolution\/} of $s$.  More precisely, $\wtil s$ is obtained from
$q$ by replacing two of its sides with gashes directed away from $q$.
These are the sides where the labels of $q$ and $s$ disagree, and the
original labels of the gashes come from $s$ while the new labels come
from $q$.  The following is an example.
\[
s = \raisebox{-8.5mm}{\pic{.7}{s3614}} \ \ ; \hmm{10}
q = \raisebox{-8.5mm}{\pic{.7}{uf34}} \ \ ; \hmm{10}
\wtil s = \raisebox{-8.5mm}{\pic{.7}{s3614res}}
\]

\begin{prop}\label{prop:scabres}
  {\rm(a)} Let $x, x', y, y'$ be puzzle labels.  The gashed rhombus
  \[
  \psfrag{x}{$x$}
  \psfrag{y}{$y$}
  \psfrag{xc}{$x'$}
  \psfrag{yc}{$y'$}
  \pic{.7}{grhombus1}
  \]
  is a resolution of a scab if and only if the inner labels form an
  equivariant puzzle piece and the two gashes are in opposite classes.

  {\rm(b)} Each scab has exactly one resolution.
\end{prop}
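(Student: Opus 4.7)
My plan is to mirror the strategy used for Proposition~\ref{prop:illres}. For the forward direction of part (a), start with a scab $s$ whose resolution $\wtil s$ is obtained by replacing $s$ with an equivariant piece $q$. By definition the inner labels of $\wtil s$ form $q$, so the only thing to check is that the two gashes are in opposite classes. Write $s$ as two triangular puzzle pieces sharing a common edge with some label $c$, and let $g_1$, $g_2$ be the two directed gashes of $\wtil s$ on the sides of $q$ connected by the obtuse angle. Each $g_i$ lies on the boundary of exactly one triangle of $s$; propagate $g_i$ across that triangle so that it lands on the shared diagonal of $s$. The result shows $[g_1]$ and $[g_2]$ contain the two directed gashes along that diagonal, pointing in opposite directions and with interchanged labels, hence $[g_1] = [\wh{g_2}]$.

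For the converse in (a) and for (b), I would compile a table analogous to Table~\ref{ssec:temporary}. For each of the eight equivariant puzzle pieces $q$ listed in Section~\ref{ssec:equiv_puzzles} (considered up to rotation), and for each of the two pairs of sides of $q$ joined by an obtuse angle, I would list the unique scab $s$ whose remaining two labels agree with those of $q$. Note that once one chooses which two obtuse-adjacent sides of $q$ are preserved, the two triangular pieces of $s$ are forced: along each non-preserved side of $q$ we fix a new outer label, and by Lemma~\ref{lem:onebad} each triangular puzzle piece is determined by any two of its labels (provided such a piece exists). One therefore checks in finitely many cases that $s$ exists, that the resulting gashed rhombus $\wtil s$ has its two gashes in opposite classes (by matching against the list of classes in Section~\ref{ssec:gashclass}), and that every gashed rhombus satisfying the hypothesis in (a) arises from exactly one such $s$. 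The first of these verifications yields the converse of (a); the third gives part (b).

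The main obstacle will be organizing the bookkeeping cleanly so that the enumeration is transparent and complete. The number of raw cases is modest (eight equivariant pieces, two choices of preserved obtuse-adjacent pair each), and the rigidity provided by Lemma~\ref{lem:onebad} makes every individual case immediate; the genuine combinatorial input is the classification of nontrivial gash classes from Section~\ref{ssec:gashclass}, which supplies both the opposite-class assertion in the forward direction and the uniqueness required for part (b).
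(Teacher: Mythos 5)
Your overall strategy matches the paper's: the forward direction of (a) is read off from the two triangular pieces of the scab, and the converse of (a) together with (b) is a finite verification recorded in a table (the paper's Table~\ref{ssec:scabs}) and cross-checked against the gash classes of Section~\ref{ssec:gashclass}. Whether one indexes that enumeration by the equivariant piece $q$ and the choice of preserved obtuse-adjacent pair, as you do, or by listing all gashed rhombuses satisfying the condition in (a), as the paper does, is immaterial; it is the same case check.

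The step I would push back on is the conclusion of your forward-direction argument. You propagate $g_1$ and $g_2$ onto the shared diagonal, observe that the resulting directed gashes there point in opposite directions with interchanged labels, and conclude $[g_1]=[\wh{g_2}]$. But two directed gashes on the same edge with opposite directions and interchanged labels are exactly a pair related by a $180^\circ$ rotation, and $[\mathrm{rot}_{180}(g)]=\mathrm{rot}_{180}([g])$ is in general not $[\wh{g}]$ --- for the singleton classes listed at the end of Section~\ref{ssec:gashclass} the two sets are visibly different, since $\mathrm{rot}_{180}(g)$ and $\wh{g}$ have opposite directions. The geometric source of the trouble is that the two triangles of a scab lie on opposite sides of the diagonal, so the two intermediate gashes cannot both point at their respective triangles in the same direction; this is genuinely unlike the temporary-piece case (Proposition~\ref{prop:illres}), where both intermediate gashes are directed north and are therefore opposite gashes by definition. (For some equivariant pieces, e.g.\ the one with labels $3$ and $4$, there is in fact no triangle carrying both of its adjacent labels, so the propagation of one of the gashes need not reach the diagonal at all.) So the ``hence'' requires an extra argument; the safest repair is to fold the forward direction into the same finite inspection against the explicit class list that you already need for the converse. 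A smaller bookkeeping point: your stated verifications show that each qualifying gashed rhombus is the resolution of exactly one scab, which gives the converse of (a), but part (b) additionally needs that every scab occurs in your table, and occurs only once; this is precisely what the paper's Table~\ref{ssec:scabs} is checked to document.
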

\begin{proof}
  Assume that the gashed rhombus is a resolution of a scab, and let
  $z$ be the label of the middle edge in this scab.  Then the valid
  puzzle pieces
  \[
  {
    \psfrag{a}{$x'$}
    \psfrag{e}{$y$}
    \psfrag{c}{$z$}
    \pic{.7}{uaec2}
  }
  \ \ \ \raisebox{4mm}{\text{and}} \ \ \ 
  {
    \psfrag{a}{$x$}
    \psfrag{e}{$y'$}
    \psfrag{c}{$z$}
    \pic{.7}{uaec2}
  }
  \]
  show that the two gashes of the resolution are in opposite classes.
  On the other hand, by inspection of the gash classes displayed in
  Section~\ref{ssec:gashclass} it is easy to check that, up to
  rotation, all gashed rhombuses that satisfy the condition in part
  (a) are represented in Table~\ref{ssec:scabs}.  Since
  Table~\ref{ssec:scabs} also documents that every scab has a
  resolution, this completes the proof.
\end{proof}

\begin{table}
  \begin{tabular}{|l|}
    \hline
    \ {\sc Table~\ref{ssec:scabs}.} \ Scabs and their resolutions.
    \raisebox{4.5mm}{\mbox{}}\raisebox{-2.5mm}{\mbox{}}
    \\
    \hline
    \vspace{-2.5mm}\\
    \pic{.7}{s0031} \pic{.7}{s0031res} \ \ \ \ 
    \pic{.7}{s0311} \pic{.7}{s0311res} \ \ \ \ 
    \pic{.7}{s0721} \pic{.7}{s0721res} \ \ \ \ 
    \pic{.7}{s1142} \pic{.7}{s1142res}
    \\
    \pic{.7}{s1062} \pic{.7}{s1062res} \ \ \ \ 
    \pic{.7}{s1422} \pic{.7}{s1422res} \ \ \ \ 
    \pic{.7}{s0342} \pic{.7}{s0342res} \ \ \ \ 
    \pic{.7}{s0052} \pic{.7}{s0052res} 
    \\
    \pic{.7}{s0522} \pic{.7}{s0522res} \ \ \ \ 
    \pic{.7}{s3152} \pic{.7}{s3152res} \ \ \ \ 
    \pic{.7}{s3622} \pic{.7}{s3622res} \ \ \ \ 
    \pic{.7}{s0074} \pic{.7}{s0074res} 
    \\
    \pic{.7}{s0514} \pic{.7}{s0514res} \ \ \ \ 
    \pic{.7}{s3174} \pic{.7}{s3174res} \ \ \ \ 
    \pic{.7}{s3614} \pic{.7}{s3614res} \ \ \ \ 
    \pic{.7}{s0536} \pic{.7}{s0536res}
    \\
    \pic{.7}{s7452} \pic{.7}{s7452res} 
    \\
    \hline
  \end{tabular}
\end{table}

\subsection{Mutations}\label{ssec:mutations}

Let $P$ be a flawed puzzle and let $\wtil P$ be the result of
replacing the flaw in $P$ with one of its resolutions.  Then $\wtil P$
is a gashed puzzle called a {\em resolution\/} of $P$.  The two
directed gashes in $\wtil P$ are in opposite classes, and these gashes
are either connected or separated by a sequence of edges from the same
border segment.  The {\em right gash\/} of $\wtil P$ is the rightmost
of the two gashes for an observer standing between the gashes and
facing the direction of the gashes.  The other gash in $\wtil P$ is
called the {\em left gash}.  The following gashed puzzles are
resolutions of the flawed puzzles displayed in
Section~\ref{ssec:flawed}.  The right gashes of these puzzles are
$0\backslash 2$, $\ds\frac{\,0\,}{\,7\,}$, and $4/0$.
\[
\pic{.7}{resolex4} \ \ \ \
\pic{.7}{resolex5} \ \ \ \
\pic{.7}{resolex6}
\]

Define the {\em propagation path\/} of a gash in $\wtil P$ to be the
sequence of edges that change if we repeatedly propagate the gash
until no more propagations are possible.  We let $\Phi(\wtil P)$
denote the result of propagating both gashes in $\wtil P$ as far as
possible and then reversing the directions of the gashes.  This is
well defined by the first claim in the following result.

\begin{thm}\label{thm:mutate}
  Let $\wtil P$ be a resolution of a flawed puzzle.  Then the
  propagation paths of the two gashes in $\wtil P$ are disjoint.
  Furthermore, $\Phi(\wtil P)$ is a resolution of a unique flawed
  puzzle, and we have $\Phi(\Phi(\wtil P)) = \wtil P$.
\end{thm}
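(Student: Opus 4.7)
My plan rests on three structural observations about propagation. First, each propagation step is deterministic: by Lemma~\ref{lem:onebad} (together with the observation that equivariant pieces force a gash to its opposite side), from the two directed gashes $g_L, g_R$ of $\wtil P$ we obtain canonical propagation paths $\Pi_L, \Pi_R$. Second, the relation ``$h$ is immediately reachable from $g$'' is symmetric, since propagating $g$ across a puzzle piece $q$ is inverted by propagating $h$ across the $180^\circ$ rotation of $q$. Third, by the discussion of gash pairs in Section~\ref{ssec:gash_pairs} and by Propositions~\ref{prop:illres}(a) and~\ref{prop:scabres}(a), the two gashes in any resolution of a flaw are in opposite classes, and class-opposition is preserved along propagation since $[\wh h]=\{\wh g : g\in[h]\}$.

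To establish disjointness of $\Pi_L$ and $\Pi_R$, I would run a winding-number / region-growing argument. The two gashes of $\wtil P$, together with the short border arc or single triangle or rhombus of the flaw, enclose a small ``seed'' region $D_0 \subset P$. After $k$ simultaneous propagation steps, $\Pi_L$ and $\Pi_R$ bound a region $D_k$ whose boundary consists of the flaw, the two partial paths, and the current gashes $g^{(k)}_L, g^{(k)}_R$, still in opposite classes. Inductively I would show that each step annexes a single triangular puzzle piece to $D_k$ without disturbing its simple-connectedness: if $\Pi_L$ and $\Pi_R$ ever shared an edge or met prematurely, some triangular piece inside $D_k$ would be forced to carry three mismatching sides simultaneously, contradicting Lemma~\ref{lem:onebad}. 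Making the inductive step rigorous requires a finite but delicate case check over the gash classes enumerated in Section~\ref{ssec:gashclass}, in order to verify that the coordinated propagation of two opposite-class gashes indeed sweeps out $D_k$ on two sides. This is the main obstacle, and is precisely what the introduction describes as ``a winding number argument that is justified with case checking.''

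Once disjointness is in hand, I would conclude as follows. When both gashes become stuck among the $28$ non-propagable directed gashes listed in Section~\ref{ssec:gashclass}, their terminal positions must be adjacent on the boundary of the final $D_k$, and the preserved opposite-class relation forces their geometric configuration to match exactly one of three shapes: two gashes on a single border segment, two gashes on two sides of a triangle, or two gashes on two sides of a rhombus. The definition of gash pairs together with Propositions~\ref{prop:illres}(a) and~\ref{prop:scabres}(a) then identify this configuration as a resolution of a flawed puzzle of the corresponding type, and Propositions~\ref{prop:illres}(b) and~\ref{prop:scabres}(b) give uniqueness of the flawed puzzle. Finally, $\Phi(\Phi(\wtil P))=\wtil P$ follows immediately from the symmetry of immediate reachability: reversing the directions of the terminal gashes and re-propagating retraces $\Pi_L$ and $\Pi_R$ backwards, returning us to $\wtil P$.
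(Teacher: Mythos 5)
Your high-level outline coincides with the paper's strategy --- deterministic propagation via Lemma~\ref{lem:onebad}, preservation of opposite gash classes, a winding-number argument for disjointness, and reversibility of propagation for $\Phi(\Phi(\wtil P))=\wtil P$ --- but the central step is left as a plan rather than a proof, and the mechanism you propose for it does not work as stated. The paper does not grow a region $D_k$ along the propagation. It fixes, \emph{before any propagation}, a set of oriented labeled edges determined solely by the class of the right gash, takes the connected component $I$ of this set containing the flaw, and observes that the edges modified by propagation are exactly the \emph{spikes} of $I$ (edges attached to $I$ but not in $I$), read in counterclockwise order $s_0,\dots,s_\ell$ around the boundary of $I$. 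The winding-number argument is run on this fixed object: each spike gets an adjusted angle, Lemma~\ref{lem:adj} shows these angles are weakly increasing with a forced jump of $\pi/3$ at each change of spike group, and this simultaneously excludes the possibility that the spike sequence encircles a hole of $I$ and pins down a single transition index $r$. Disjointness is then automatic (the right gash sweeps $s_0,\dots,s_{r-1}$ and the left gash sweeps $s_\ell,\dots,s_r$), and the stuck gashes land on two sides of a single puzzle piece or a single border arc precisely because $s_{r-1}$ and $s_r$ are \emph{consecutive} spikes of $I$. Your sketch contains none of this structure: the regions $D_k$ are not well defined, since the two propagation paths need not bound a planar region as they are traced out; and your appeal to Lemma~\ref{lem:onebad} to forbid a collision is a misuse of that lemma, which only gives uniqueness of a single propagation step --- a triangle with all three sides gashed is not in itself a contradiction.

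Moreover, the step you defer as ``the main obstacle'' is the actual content of the proof: it consists of classifying, for each of the three essentially distinct right-gash classes $[0/1]$, $[0/2]$, $[0/4]$, every puzzle piece that can separate two consecutive spikes (Tables~\ref{ssec:mutproof}(a)--(c)), verifying the angle inequality in each case, and checking that the spike substitutions dictated by the gash class turn each such piece into another valid piece. Your final assertion that the two terminal gashes ``must be adjacent'' and fall into one of three configurations is exactly what has to be proved and does not follow from the opposite-class relation alone. The portions of your argument that do go through --- uniqueness of the new flaw via Propositions~\ref{prop:illres} and~\ref{prop:scabres}, and $\Phi\circ\Phi=\mathrm{id}$ from reversibility of individual propagation steps --- are correct but constitute the easy part of the theorem.
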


Theorem~\ref{thm:mutate} will be proved in Section~\ref{ssec:mutproof}.
We will say that two flawed puzzles $P$ and $Q$ are {\em mutations\/}
of each other if $P$ has a resolution $\wtil P$ such that $\Phi(\wtil
P)$ is a resolution of $Q$.  The set of all flawed puzzles can be
arranged in a {\em mutation graph\/}, where each flawed puzzle is
connected to its mutations.  Figure~\ref{ssec:mutations} shows one
component of this graph.  For each flawed puzzle in the figure we have
also indicated the set of edges that are changed by at least one
mutation.

It should be noted that resolutions of flawed puzzles and propagation
of gashes commute with rotations and dualization.  This simplifies our
proof of Theorem~\ref{thm:mutate}, and it implies that mutation
commutes with rotations and dualization.

\begin{table}
  \begin{tabular}{|l|}
    \hline
    \ {\sc Figure~\ref{ssec:mutations}.} \ A connected component of the
    mutation graph.
    \raisebox{4.5mm}{\mbox{}}\raisebox{-2.5mm}{\mbox{}}
    \\
    \hline
    \vspace{-2.5mm}\\
    \noindent\pic{.49}{mutcomp2}
    \\
    \hline
  \end{tabular}
\end{table}

\begin{example}
  In early versions of this paper we conjectured that every connected
  component of the mutation graph is a tree.  However, the reader may
  check that the following puzzle belongs to a cycle of length 13 in
  its component.  Notice also that since this puzzle is dual to
  itself, dualization of puzzles provides an involution of this
  component.
  \begin{center}
    \pic{.6}{loop13}
  \end{center}
\end{example}

\subsection{Proof of Theorem~\ref{thm:mutate}}\label{ssec:mutproof}

Let $\wtil P$ be a resolution of a flawed puzzle.  After rotating and
possibly dualizing this puzzle, we may assume that the right gash in
$\wtil P$ is equivalent to one of the following directed gashes:
\[
\raisebox{-6mm}{\pic{.7}{gash110}} \ \ \ \ \ 
\raisebox{-6mm}{\pic{.7}{gash120}} \ \ \ \ \ 
\raisebox{-6mm}{\pic{.7}{gash140}}
\]
We first assume that the right gash is in the equivalence class
\[
[0/1] \ = \ 
\left[ \raisebox{-4mm}{\pic{.7}{gash110}} \right] \ 
= \ 
\left\{
\raisebox{-2mm}{\pic{.7}{gash030}} ,
\raisebox{-4mm}{\pic{.7}{gash110}} ,
\raisebox{-4mm}{\pic{.7}{gash165}} ,
\raisebox{-4mm}{\pic{.7}{gash213}} ,
\raisebox{-4mm}{\pic{.7}{gash245}} ,
\raisebox{-2mm}{\pic{.7}{gash346}}
\right\} \,.
\]
The left and right gashes in $\wtil P$ are connected by a node or a
sequence of edges.  In the latter case, these edges have the label 2
and are located on the north-west border segment of $\wtil P$.
Consider the set of all edges in $\wtil P$ that come from the
following list (with the indicated orientations).  These edges can
also be found in the center of Figure~\ref{ssec:mutproof}(a).
\[
\pic{.7}{edge12} \ \ \ 
\pic{.7}{edge14} \ \ \ 
\raisebox{2.5mm}{\pic{.7}{edge01}} \ \ \ 
\raisebox{2.5mm}{\pic{.7}{edge02}} \ \ \ 
\raisebox{2.5mm}{\pic{.7}{edge05}} \ \ \ 
\raisebox{2.5mm}{\pic{.7}{edge07}} \ \ \ 
\pic{.7}{edge20} \ \ \ 
\pic{.7}{edge22}
\]
Let $I$ be the connected component in this set of edges that includes
the node or edges connecting the left and right gashes.  The edges of
$\wtil P$ that are connected to $I$ but not contained in $I$ will be
called the {\em spikes\/} of $I$.  In particular, the left and right
gashes of $P$ are spikes of $I$.  In the following two examples the
edges of $I$ have been colored light blue while the spikes have been
made thick.  The nodes where the spikes are connected to $I$ are drawn
as fat dots.  One can show that no edge outside $I$ can be connected
to $I$ in both ends, but we will not rely on this fact.  In such a
situation the edge would count as two spikes.
\[
\pic{.7}{mutpf1} 
\ \ \ \ \ 
\pic{.7}{mutpf2}
\]

Let $s_0, s_1, \dots, s_\ell$ be the sequence of spikes obtained when
we start with the right gash and follow the boundary of $I$ in counter
clockwise direction.  Then $s_0$ is the right gash of $\wtil P$ and
$s_\ell$ is the left gash.  Any pair of consecutive spikes
$s_{k-1},s_k$ in the sequence is separated either by a single puzzle
piece or by the boundary of $\wtil P$; the latter happens when part of
the boundary of $I$ is contained in the boundary of $\wtil P$.

Let $\theta_0 \in (0,2\pi]$ be the direction of the first spike $s_0$
in $\wtil P$.  Then choose angles $\theta_1, \dots, \theta_\ell \in
\R$ for the other spikes relative to $\theta_0$.  More precisely, if
$\theta_0, \dots, \theta_{k-1}$ have been chosen, then let $\theta_k$
be the result of adding or subtracting an amount to $\theta_{k-1}$
that represents the change in direction from $s_{k-1}$ to $s_k$.  For
example, in the hypothetical situation
\[
\pic{.5}{spkex}
\]
we have $\ell = 20$ and
\[
\begin{split}
  &(\theta_0,\theta_1,\dots,\theta_{20}) = \\
  &\textstyle (\frac{4}{3}\pi, \pi, \frac{4}{3}\pi, \frac{4}{3}\pi,
  \pi, 2\pi, \frac{5}{3}\pi, \frac{5}{3}\pi, \frac{5}{3}\pi,
  \frac{4}{3}\pi, 3\pi, 3\pi, \frac{8}{3}\pi, \frac{7}{3}\pi,
  \frac{8}{3}\pi, \frac{7}{3}\pi, \frac{7}{3}\pi, \frac{7}{3}\pi,
  \frac{8}{3}\pi, 3\pi, \frac{8}{3}\pi ) \,.
\end{split}
\]

To each spike $s_k$ we now define an {\em adjusted angle\/}
$\wh\theta_k$ that is obtained by subtracting an amount from
$\theta_k$ that depends on both the label of $s_k$ and ($\theta_k$ mod
$2\pi$).  For $s_0$ and $s_\ell$ we use the original labels of the
corresponding gashes.  Figure~\ref{ssec:mutproof}(a) shows all possible
spikes of $I$ together with the amount that should be subtracted in
each case.  Notice that many edges in the figure are used to represent
several spikes with different labels, which is done by listing the
relevant labels.  For example, if $\theta_k = \frac{11}{3}\pi$ and
$s_k$ has label $6$, then we obtain $\wh\theta_k = \frac{5}{3}\pi$,
since the amount $2\pi$ must be subtracted from the angle of any spike
of the form $\raisebox{-2.8mm}{\pic{.5}{spike56}}$.

\begin{table}
  \begin{tabular}{|l|}
    \hline
    \ {\sc Figure}~\ref{ssec:mutproof}(a). \ Spike groups and
    adjustment angles for the gash class $[0/1]$.
    \raisebox{4.5mm}{\mbox{}}\raisebox{-2.5mm}{\mbox{}}
    \\
    \hline
    \vspace{-2mm}
    \\
    {
      \ \ \ \ \ 
      \psfrag{x}{$\pi$}\psfrag{4x}{$\frac{4\pi}{3}$}
      \psfrag{5x}{$\frac{5\pi}{3}$}\psfrag{6x}{$2\pi$}
      \psfrag{7x}{$\frac{7\pi}{3}$}\psfrag{8x}{$\frac{8\pi}{3}$}
      \psfrag{G0}{$G_0$}\psfrag{G1}{$G_1$}\psfrag{G2}{$G_2$}
      \psfrag{G3}{$G_3$}\psfrag{G4}{$G_4$}\psfrag{G5}{$G_5$}
      \pic{.7}{group01}
    }
    \\
    \hline
  \end{tabular}
\end{table}


Figure~\ref{ssec:mutproof}(a) separates the collection of possible
spikes to $I$ into the six groups $G_0$, $G_1$, $G_2$, $G_3$, $G_4$,
and $G_5$.  Notice that the right gash $s_0$ belongs to $G_0$, while
the left gash $s_\ell$ belongs to $G_1$.  In particular, we have $\pi
\leq \theta_0 \leq 2\pi$ and $\wh\theta_0 = 0$.

\begin{lemma}\label{lem:adj}
  We have $\wh\theta_0 \leq \wh\theta_1 \leq \dots \leq
  \wh\theta_\ell$.  Furthermore, if two consecutive spikes $s_{k-1}$
  and $s_k$ belong to different spike groups, or if $s_{k-1}$ and
  $s_k$ are separated by the boundary of $\wtil P$, then $\wh\theta_k
  - \wh\theta_{k-1} \geq \frac{1}{3}\pi$.
\end{lemma}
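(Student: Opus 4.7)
The plan is to establish Lemma~\ref{lem:adj} by a finite case check driven by the classification of puzzle pieces. The adjustment amounts displayed in Figure~\ref{ssec:mutproof}(a) are chosen precisely so that $\wh\theta_k$ records, in multiples of $\pi/3$, the discrete sector into which the spike $s_k$ points relative to its group label; monotonicity therefore reduces to a purely local claim about each individual transition $s_{k-1} \to s_k$.

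First I would fix a transition and consider two situations: either $s_{k-1}$ and $s_k$ are separated by a single puzzle piece $q$ lying outside $I$ but sharing edges of $I$ with it (or meeting at a common node on its boundary), or they are separated by a stretch of the hexagonal boundary of $\wtil P$. In the first situation $q$ is either triangular or equivariant. The possible $q$ are heavily constrained: the sides of $q$ that face $I$ must carry one of the labels $\{01,02,05,07,12,14,20,22\}$ with the proper orientation, and by Lemma~\ref{lem:onebad} the remaining labels on $q$ are then determined. One obtains a short catalog of allowed configurations, and for each of them the raw turning $\theta_k - \theta_{k-1}$ and the labels carried by $s_{k-1}$ and $s_k$ can be read off directly from $q$.

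Next, for each configuration, one subtracts the two adjustment amounts assigned by Figure~\ref{ssec:mutproof}(a) (based on each spike's label together with $\theta_k\ \mathrm{mod}\ 2\pi$) and checks that $\wh\theta_k - \wh\theta_{k-1} \geq 0$, with equality exactly when both spikes fall into the same group $G_j$, and $\wh\theta_k-\wh\theta_{k-1} \geq \pi/3$ otherwise. The boundary case is analogous: a corner of the hexagon contributes a fixed turn whose size is determined by the interior angle, and since boundary segments of $\wtil P$ carry only simple labels, a direct inspection of which spike labels are compatible with each border orientation shows that the group index must advance by at least one across a boundary transition.

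The main obstacle is not conceptual but combinatorial: one must enumerate every way a triangular or equivariant puzzle piece can sit against $I$, for the fixed gash class $[0/1]$ treated here. Rotational symmetry around the region reduces work slightly, and the two remaining gash classes $[0/2]$ and $[0/4]$ will be handled by parallel analyses with their own spike-group tables. Within each class, however, the verification is genuine casework. Nothing subtle happens in any single case, but the list is long enough that care is needed to confirm it is exhaustive; once that is done, the chain $\wh\theta_0 \leq \wh\theta_1 \leq \cdots \leq \wh\theta_\ell$ follows by induction on $k$, and the stronger $\pi/3$ bound at group changes is recorded along the way.
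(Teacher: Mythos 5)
Your proposal follows essentially the same route as the paper: reduce to a local statement about each transition $s_{k-1}\to s_k$, enumerate the finitely many puzzle pieces (triangular or equivariant) that can separate two consecutive spikes for the fixed gash class, verify the inequality case by case using the adjustment amounts of Figure~\ref{ssec:mutproof}(a), and handle the boundary transitions separately via the observation that the turn is at least $\pi$ and boundary labels are simple. The paper carries out exactly this casework in Tables~\ref{ssec:mutproof}(a)--(c), organized by the spike groups of $s_{k-1}$ and $s_k$; your only (harmless) overstatement is asserting that equality holds \emph{exactly} when the two spikes lie in the same group, which is stronger than what the lemma requires.
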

\begin{proof}
  Assume first that $s_{k-1}$ and $s_k$ are separated by a puzzle
  piece $q$.  Then the difference $\theta_k-\theta_{k-1}$ is
  determined by $q$.  Since there are finitely many possibilities for
  $q$, the lemma can be checked case by case.

  Table~\ref{ssec:mutproof}(a) lists all possibilities for the puzzle
  piece $q$ when $s_{k-1}$ belongs to $G_0$, $G_1$, or $G_2$.  The
  spikes $s_{k-1}$ and $s_k$ are also identified in each case.  The
  puzzle pieces $q$ for which $s_{k-1}$ is in $G_3$, $G_4$, or $G_5$
  can be obtained by rotating the puzzle pieces in
  Table~\ref{ssec:mutproof}(a) by 180 degrees.  Notice also that the
  puzzle pieces in the table are organized into four rows, depending
  on the exact spike groups that $s_{k-1}$ and $s_k$ belong to.  This
  will be convenient later.

  As an example, if $q =
  \raisebox{-4mm}{\pic{.5}{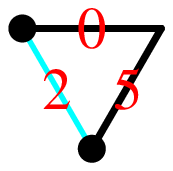}}$, then $s_{k-1} =
  \raisebox{-2.9mm}{\pic{.5}{spike15}}$ and $s_k =
  \raisebox{-1mm}{\pic{.5}{spike00}}$ are both in the group $G_1$, and
  we have $\theta_k = \theta_{k-1}-\frac{\pi}{3}$, $\wh\theta_{k-1} =
  \theta_{k-1}-2\pi$, and $\wh\theta_k = \theta_k-\frac{5\pi}{3} =
  \wh\theta_{k-1}$.  On the other hand, if $q =
  \raisebox{-4mm}{\pic{.5}{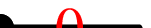}}$, then $s_{k-1} =
  \raisebox{-2.9mm}{\pic{.5}{spike46}}$ is in the group $G_0$, $s_k =
  \raisebox{-1mm}{\pic{.5}{spike00}}$ is in $G_1$, $\theta_k =
  \theta_{k-1}+\frac{2\pi}{3}$, $\wh\theta_{k-1} =
  \theta_{k-1}-\frac{4\pi}{3}$, and $\wh\theta_k =
  \theta_k-\frac{5\pi}{3} = \wh\theta_{k-1}+\frac{\pi}{3}$.  We leave
  the remaining cases to the reader.

  We finally assume that $s_{k-1}$ and $s_k$ are separated by the
  boundary of $\wtil P$.  Then we have $\theta_k - \theta_{k-1} \geq
  \pi$, and since all boundary labels of $\wtil P$ are simple, it
  follows that $s_{k-1}$ and $s_k$ have simple labels.  Based on these
  observations one may check from Figure~\ref{ssec:mutproof}(a) that
  $\wh\theta_k - \wh\theta_{k-1} \geq \frac{\pi}{3}$, as required.
\end{proof}

\begin{table}
  \begin{tabular}{|l|}
    \hline
    \ {\sc Table}~\ref{ssec:mutproof}(a). \ Consecutive spikes for the
    gash class $[0/1]$.
    \raisebox{4.5mm}{\mbox{}}\raisebox{-2.5mm}{\mbox{}}
    \\
    \hline
    \vspace{-3mm}
    \\
    \pic{.6}{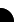}
    \pic{.6}{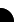}
    \pic{.6}{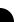}
    \pic{.6}{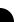}
    \pic{.6}{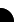}
    \pic{.6}{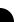}
    \pic{.6}{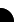}
    \raisebox{-7.5mm}{\pic{.6}{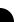}}
    \pic{.6}{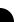}
    \vspace{-6mm}
    \\
    \pic{.6}{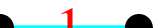}
    \pic{.6}{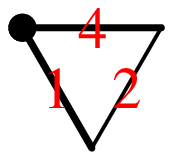}
    \pic{.6}{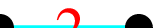}
    \pic{.6}{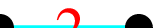}
    \pic{.6}{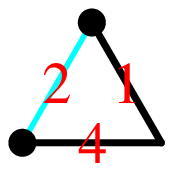}
    \pic{.6}{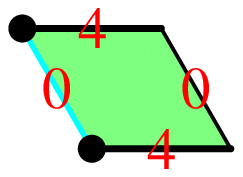}
    \\
    \hline
    \pic{.6}{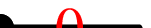}
    \pic{.6}{spike4600-0-6-14.eps}
    \pic{.6}{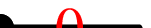}
    \pic{.6}{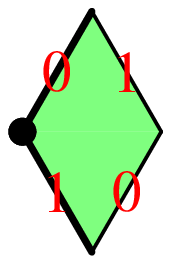}
    \pic{.6}{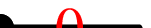}
    \pic{.6}{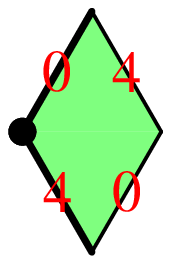}
    \pic{.6}{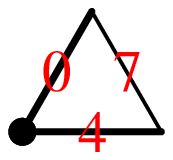}
    \pic{.6}{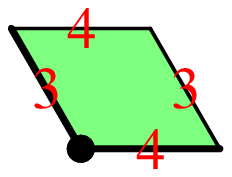}
    \\
    \hline
    \vspace{-3mm}
    \\
    \pic{.6}{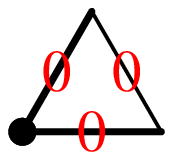}
    \pic{.6}{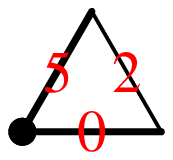}
    \pic{.6}{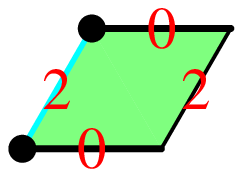}
    \pic{.6}{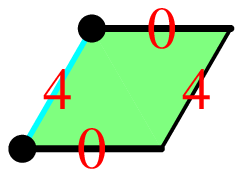}
    \pic{.6}{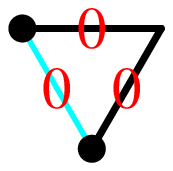}
    \pic{.6}{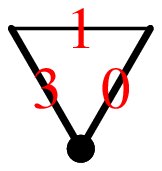}
    \pic{.6}{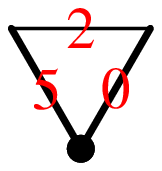}
    \raisebox{-7.5mm}{\pic{.6}{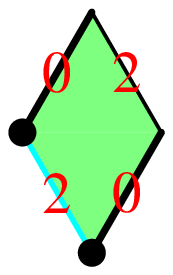}}
    \pic{.6}{spike1500-0-5-2.eps}
    \vspace{-6mm}
    \\
    \pic{.6}{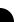}
    \pic{.6}{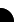}
    \pic{.6}{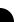}
    \pic{.6}{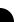}
    \pic{.6}{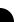}
    \pic{.6}{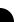}
    \\
    \hline
    \pic{.6}{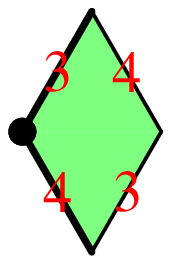}
    \pic{.6}{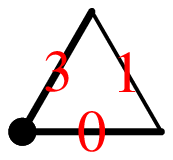}
    \pic{.6}{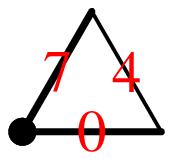}
    \pic{.6}{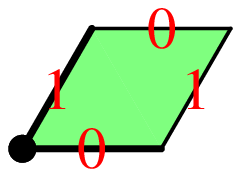}
    \pic{.6}{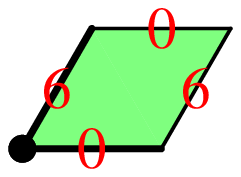}
    \pic{.6}{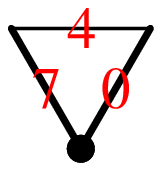}
    \pic{.6}{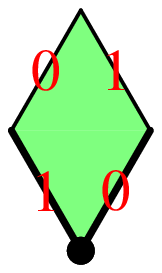}
    \pic{.6}{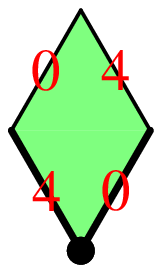}
    \pic{.6}{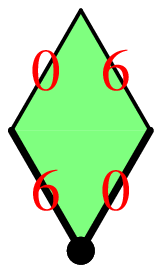}
    \\
    \pic{.6}{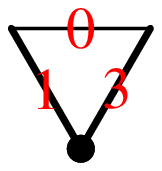} \ 
    \pic{.6}{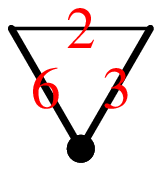} \ \
    \raisebox{-7.5mm}{\pic{.6}{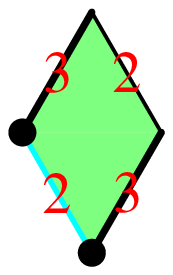}}
    \raisebox{-7.5mm}{\pic{.6}{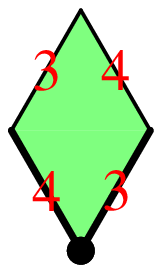}}
    \pic{.6}{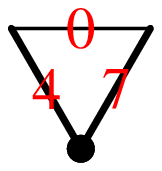}
    \raisebox{-7.5mm}{\pic{.6}{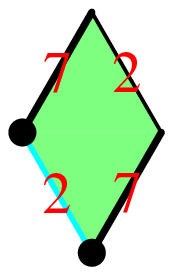}}
    \pic{.6}{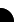}
    \pic{.6}{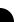}
    \raisebox{-7.5mm}{\pic{.6}{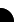}}
    \vspace{-6mm}
    \\
    \pic{.6}{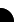}
    \pic{.6}{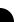}
    \\
    \hline
  \end{tabular}
\end{table}


We first deduce from Lemma~\ref{lem:adj} that our sequence of spikes
goes around the outer boundary of $I$ in counter clockwise direction,
as opposed to going around a hole in $I$ in clockwise direction.  In
other words, situations like the following are impossible.
\[
\pic{.6}{spkex2}
\]
In fact, if the sequence of spikes went around a hole in $I$, then we
would have $\theta_0-\theta_\ell \in \{ \frac{7\pi}{3}, \frac{8\pi}{3}
\}$, $-\frac{5}{3}\pi \leq \theta_\ell \leq -\pi$, $\wh\theta_0 = 0$,
and $\wh\theta_\ell = -\frac{11}{3}\pi$, which contradicts
Lemma~\ref{lem:adj}.

Since the sequence of spikes goes counter clockwise around the outer
boundary of $I$, we obtain $\theta_\ell-\theta_0 \in \{\pi,
\frac{4\pi}{3}, \frac{5\pi}{3} \}$, $2\pi \leq \theta_\ell \leq 3\pi$,
$\wh\theta_0 = 0$, and $\wh\theta_\ell = \frac{\pi}{3}$.
Lemma~\ref{lem:adj} then implies that for some $r \in [1,\ell]$ we
have $\wh\theta_0 = \wh\theta_1 = \dots = \wh\theta_{r-1} = 0$ and
$\wh\theta_r = \wh\theta_{r+1} = \dots = \wh\theta_\ell =
\frac{\pi}{3}$.  Furthermore, we have $s_k \in G_0$ for $0 \leq k \leq
r-1$ and $s_k \in G_1$ for $r \leq k \leq \ell$.  This implies that
the first $r$ spikes are separated by puzzle pieces from the first row
of Table~\ref{ssec:mutproof}(a), the two middle spikes $s_{r-1}$ and
$s_r$ are separated either by the boundary of $\wtil P$ or by a puzzle
piece from the second row of the table, and the last $\ell-r+1$ spikes
are separated by puzzle pieces from the third row.

When the right gash of $\wtil P$ is propagated, this gash moves
through the spikes $s_k$ for $0 \leq k \leq r-1$.  Each spike $s_k$ is
first replaced with the unique gash in the gash class $[0/1]$ that has
the same orientation as $s_k$ and whose original label is equal to the
label of $s_k$.  Then $s_k$ attains the new label of the same gash,
and the gash moves on.  This follows by observing that the following
substitution of spikes replaces all puzzle pieces in the first row of
Table~\ref{ssec:mutproof}(a) with different valid puzzle pieces.  These
substitutions correspond to the gashes in the gash class $[0/1]$.
\begin{align*}
  \raisebox{-1.3mm}{\pic{.6}{spike33}}\ &\mapsto \,
  \raisebox{-1.3mm}{\pic{.6}{spike30}} &
  \raisebox{-3.5mm}{\pic{.6}{spike41}} &\mapsto 
  \raisebox{-3.5mm}{\pic{.6}{spike40}} &
  \raisebox{-3.5mm}{\pic{.6}{spike46}} &\mapsto 
  \raisebox{-3.5mm}{\pic{.6}{spike45}} \\
  \raisebox{-3.5mm}{\pic{.6}{spike51}} &\mapsto 
  \raisebox{-3.5mm}{\pic{.6}{spike53}} &
  \raisebox{-3.5mm}{\pic{.6}{spike54}} &\mapsto 
  \raisebox{-3.5mm}{\pic{.6}{spike55}} &
  \raisebox{-1.3mm}{\pic{.6}{spike04}}\, &\mapsto \ 
  \raisebox{-1.3mm}{\pic{.6}{spike06}}
\end{align*}
The above propagations will replace the spike $s_{r-1}$ with the
unique gash in the class $[0/1]$ whose orientation and original label
agree with $s_{r-1}$, and this gash points to either the boundary of
$\wtil P$ or a puzzle piece from the second row of
Table~\ref{ssec:mutproof}(a).  An inspection of the puzzle pieces in
this row then shows that the right gash cannot be propagated further.

Similarly, the left gash of $\wtil P$ propagates through the spikes
$s_k$ for $r \leq k \leq \ell$ in reverse order.  Each spike $s_k$ is
first replaced with the unique gash in the opposite gash class $[1/0]$
that has the same orientation as $s_k$ and whose original label is
equal to the label of $s_k$.  Then $s_k$ attains the new label of this
gash, and the gash moves on.  This follows because the substitution of
spikes corresponding to the opposite class $[1/0]$ replaces all puzzle
pieces in the third row of Table~\ref{ssec:mutproof}(a) with different
valid puzzle pieces.  Eventually $s_r$ is replaced with the unique
gash from the opposite class $[1/0]$ with the same orientation and
original label.  At this point an inspection of the second row of
Table~\ref{ssec:mutproof}(a) shows that the left gash cannot be
propagated further (this is also true if the left gash is propagated
before the right gash).

At this point $\Phi(\wtil P)$ is obtained by reversing the directions
of both gashes.  If the spikes $s_{r-1}$ and $s_r$ are both on the
boundary of $\wtil P$, then the (original) labels of these spikes are
simple, and we have $\theta_r-\theta_{r-1} \geq \pi$.  An inspection
of the spike groups $G_0$ and $G_1$ of Figure~\ref{ssec:mutproof}(a)
then shows that $s_{r-1} = \raisebox{-2.9mm}{\pic{.5}{spike41}}$ and
$s_r = \raisebox{-2.9mm}{\pic{.5}{spike10}}$.  This implies that
$\Phi(\wtil P)$ is a flawed puzzle with a gash-pair on the south-east
border segment.  Otherwise $s_{r-1}$ and $s_r$ are separated by a
puzzle piece from the second row of Table~\ref{ssec:mutproof}(a), and
this puzzle piece appears in $\Phi(\wtil P)$ with gashes on two sides
that are in opposite classes.  In this case it follows from
Proposition~\ref{prop:illres}(a) or Proposition~\ref{prop:scabres}(a)
that $\Phi(\wtil P)$ is a resolution of a flawed puzzle.

Theorem~\ref{thm:mutate} follows from this when the right gash of
$\wtil P$ is in the gash class $[0/1]$.  The same argument also works
if the right gash is in one of the classes $[0/2]$ or $[0/4]$, except
that Figure~\ref{ssec:mutproof}(a) and Table~\ref{ssec:mutproof}(a)
must be replaced with Figure~\ref{ssec:mutproof}(b) and
Table~\ref{ssec:mutproof}(b) for the class $[0/2]$ and with
Figure~\ref{ssec:mutproof}(c) and Table~\ref{ssec:mutproof}(c) for the
class $[0/4]$.  This completes the proof of Theorem~\ref{thm:mutate}.

\begin{table}
  \begin{tabular}{|l|}
    \hline
    \ {\sc Figure}~\ref{ssec:mutproof}(b). \ Spike groups and
    adjustment angles for the gash class $[0/2]$.
    \raisebox{4.5mm}{\mbox{}}\raisebox{-2.5mm}{\mbox{}}
    \\
    \hline
    \vspace{-2mm}
    \\
    {
      \ \ \ \ \ 
      \psfrag{x}{$\pi$}\psfrag{4x}{$\frac{4\pi}{3}$}
      \psfrag{5x}{$\frac{5\pi}{3}$}\psfrag{6x}{$2\pi$}
      \psfrag{7x}{$\frac{7\pi}{3}$}\psfrag{8x}{$\frac{8\pi}{3}$}
      \psfrag{G0}{$G_0$}\psfrag{G1}{$G_1$}\psfrag{G2}{$G_2$}
      \psfrag{G3}{$G_3$}\psfrag{G4}{$G_4$}\psfrag{G5}{$G_5$}
      \pic{.7}{group02}
    }
    \\
    \hline
  \end{tabular}
\end{table}


\begin{table}
  \begin{tabular}{|l|}
    \hline
    \ {\sc Table}~\ref{ssec:mutproof}(b). \ Consecutive spikes for the
    gash class $[0/2]$.
    \raisebox{4.5mm}{\mbox{}}\raisebox{-2.5mm}{\mbox{}}
    \\
    \hline
    \vspace{-3mm}
    \\
    \pic{.6}{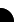}
    \pic{.6}{spike3636-6-14-0.eps} \hspace{-3mm}
    \pic{.6}{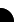} \hspace{-3mm}
    \pic{.6}{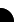}
    \pic{.6}{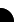}
    \pic{.6}{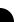}
    \pic{.6}{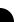} \hspace{-3mm}
    \pic{.6}{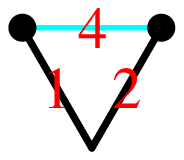}
    \pic{.6}{spike5151-2-9-1.eps} \hspace{-3mm}
    \pic{.6}{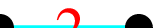}
    \\
    \hline
    \pic{.6}{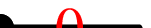}
    \pic{.6}{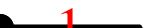}
    \pic{.6}{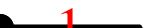}
    \pic{.6}{spike5100-0-3-1.eps}
    \pic{.6}{spike5110-8-0-1.eps}
    \pic{.6}{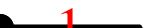}
    \pic{.6}{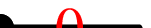}
    \pic{.6}{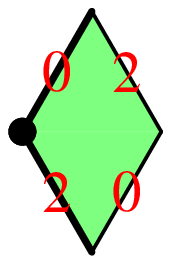}
    \\
    \hline
    \vspace{-3mm}
    \\
    \pic{.6}{spike0010-0-0-0.eps}
    \pic{.6}{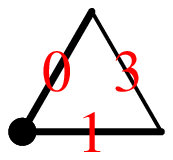} \hspace{-3mm}
    \pic{.6}{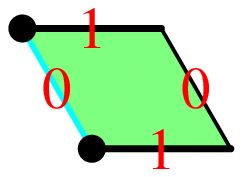} \hspace{-3mm}
    \pic{.6}{spike1000-0-0-0.eps}
    \pic{.6}{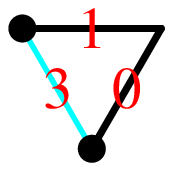}
    \pic{.6}{spike1025-2-0-5.eps}
    \pic{.6}{spike1027-4-0-7.eps} \hspace{-3mm}
    \pic{.6}{spike2510-2-0-5.eps}
    \pic{.6}{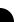} \hspace{-3mm}
    \pic{.6}{spike2727-2-15-7.eps}
    \\
    \hline
    \pic{.6}{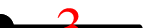}
    \pic{.6}{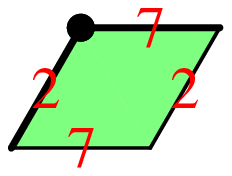}
    \pic{.6}{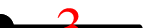}
    \pic{.6}{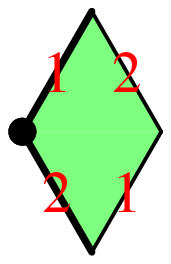}
    \pic{.6}{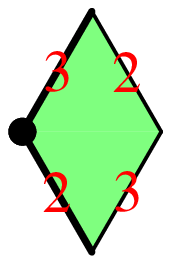}
    \pic{.6}{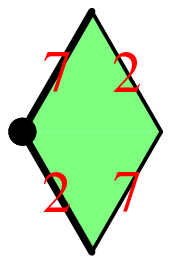}
    \pic{.6}{spike0013-0-3-1.eps}
    \pic{.6}{spike0015-0-5-2.eps}
    \pic{.6}{spike0017-0-7-4.eps}
    \\
    \pic{.6}{spike0011-0-1-8.eps}
    \pic{.6}{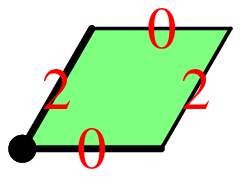}
    \pic{.6}{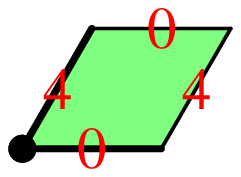}
    \pic{.6}{spike0016-0-6-14.eps}
    \pic{.6}{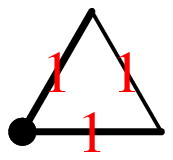}
    \pic{.6}{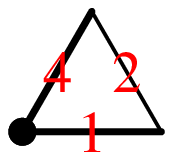}
    \pic{.6}{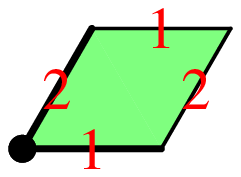}
    \\
    \pic{.6}{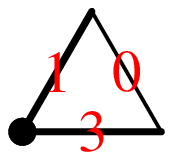}
    \pic{.6}{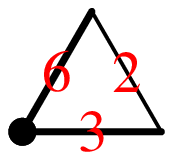}
    \pic{.6}{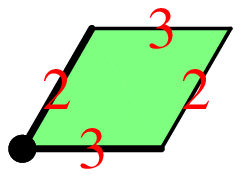}
    \pic{.6}{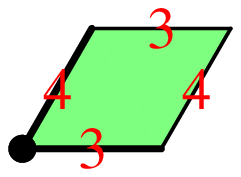}
    \pic{.6}{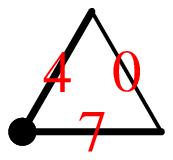}
    \pic{.6}{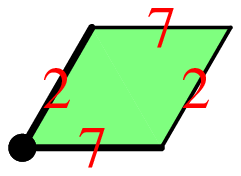}
    \\
    \pic{.6}{spike1021-8-0-1.eps}
    \pic{.6}{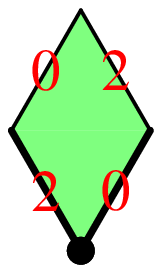}
    \pic{.6}{spike1024-12-0-4.eps}
    \pic{.6}{spike1026-14-0-6.eps}
    \pic{.6}{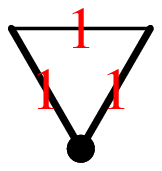}
    \pic{.6}{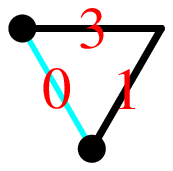}
    \pic{.6}{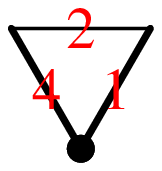}\hspace{-1mm}
    \pic{.6}{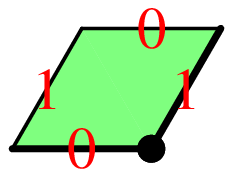}
    \pic{.6}{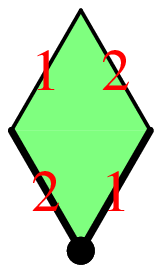}
    \pic{.6}{spike1321-0-3-1.eps}
    \\
    \pic{.6}{spike1326-2-3-6.eps}
    \raisebox{-6.5mm}{\pic{.6}{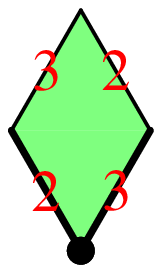}}
    \raisebox{-6.5mm}{\pic{.6}{spike1324-13-3-4.eps}}
    \pic{.6}{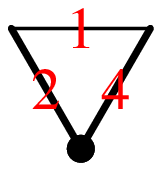}
    \pic{.6}{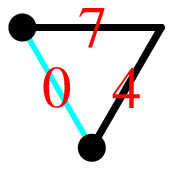}
    \pic{.6}{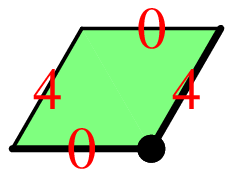}
    \pic{.6}{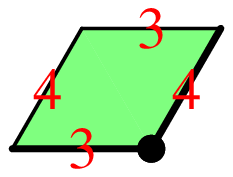}
    \pic{.6}{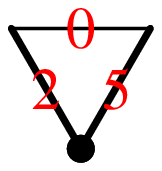}
    \pic{.6}{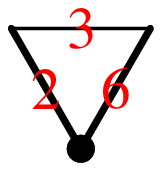}
    \vspace{-6mm}
    \\
    \pic{.6}{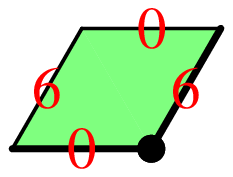} \ \
    \pic{.6}{spike1724-0-7-4.eps} \ \ \
    \pic{.6}{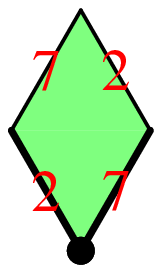}
    \pic{.6}{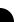}
    \pic{.6}{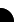}
    \pic{.6}{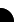}
    \pic{.6}{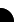}
    \pic{.6}{spike2613-2-3-6.eps}
    \pic{.6}{spike2640-14-0-6.eps}
    \\
    \hline
  \end{tabular}
\end{table}


\begin{table}
  \begin{tabular}{|l|}
    \hline
    \ {\sc Figure}~\ref{ssec:mutproof}(c). \ Spike groups and
    adjustment angles for the gash class $[0/4]$.
    \raisebox{4.5mm}{\mbox{}}\raisebox{-2.5mm}{\mbox{}}
    \\
    \hline
    \vspace{-2mm}
    \\
    {
      \ \ \ \ \ 
      \psfrag{x}{$\pi$}\psfrag{4x}{$\frac{4\pi}{3}$}
      \psfrag{5x}{$\frac{5\pi}{3}$}\psfrag{6x}{$2\pi$}
      \psfrag{7x}{$\frac{7\pi}{3}$}\psfrag{8x}{$\frac{8\pi}{3}$}
      \psfrag{G0}{$G_0$}\psfrag{G1}{$G_1$}\psfrag{G2}{$G_2$}
      \psfrag{G3}{$G_3$}\psfrag{G4}{$G_4$}\psfrag{G5}{$G_5$}
      \pic{.7}{group04}
    }
    \\
    \hline
  \end{tabular}
\end{table}


\begin{table}
  \begin{tabular}{|l|}
    \hline
    \ {\sc Table}~\ref{ssec:mutproof}(c). \ Consecutive spikes for the
    gash class $[0/4]$.
    \raisebox{4.5mm}{\mbox{}}\raisebox{-2.5mm}{\mbox{}}
    \\
    \hline
    \vspace{-3mm}
    \\
    \pic{.6}{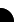}
    \pic{.6}{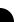}
    \pic{.6}{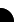}
    \pic{.6}{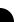}
    \pic{.6}{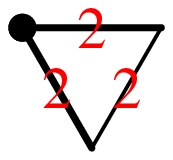}
    \pic{.6}{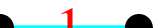}
    \pic{.6}{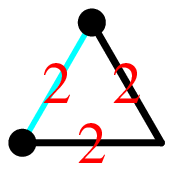}
    \pic{.6}{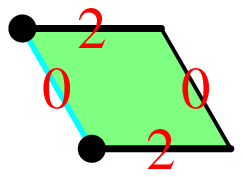}
    \\
    \hline
    \pic{.6}{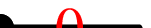}
    \pic{.6}{spike5200-0-5-2.eps}
    \pic{.6}{spike5210-10-0-2.eps}
    \pic{.6}{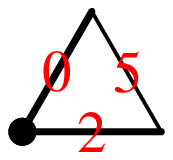}
    \pic{.6}{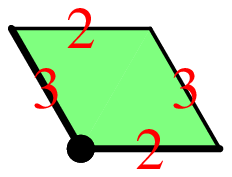}
    \\
    \hline
    \vspace{-3mm}
    \\
    \pic{.6}{spike0010-0-0-0.eps}
    \pic{.6}{spike0000-0-2-10.eps}
    \pic{.6}{spike1000-0-0-0.eps}
    \pic{.6}{spike1023-1-0-3.eps}
    \pic{.6}{spike2310-1-0-3.eps}
    \pic{.6}{spike2336-6-2-3.eps}
    \pic{.6}{spike3623-6-2-3.eps}
    \pic{.6}{spike3636-6-14-0.eps}
    \\
    \hline
    \pic{.6}{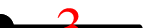}
    \pic{.6}{spike5203-3-6-2.eps}
    \pic{.6}{spike5211-9-1-2.eps}
    \pic{.6}{spike5213-11-3-2.eps}
    \pic{.6}{spike5217-15-7-2.eps}
    \pic{.6}{spike0013-0-3-1.eps}
    \pic{.6}{spike0015-0-5-2.eps}
    \pic{.6}{spike0017-0-7-4.eps}
    \pic{.6}{spike0011-0-1-8.eps}
    \\
    \pic{.6}{spike0014-0-4-12.eps}
    \pic{.6}{spike0016-0-6-14.eps}
    \pic{.6}{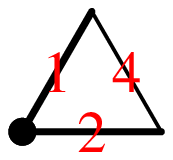}
    \pic{.6}{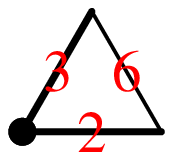}
    \pic{.6}{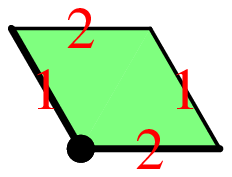}
    \pic{.6}{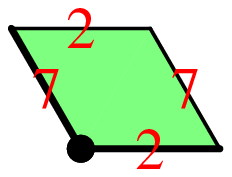}
    \pic{.6}{spike0311-3-1-0.eps}
    \pic{.6}{spike0316-3-6-2.eps}
    \\
    \pic{.6}{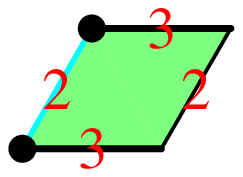}
    \pic{.6}{spike0314-3-4-13.eps}
    \pic{.6}{spike1025-2-0-5.eps}
    \pic{.6}{spike1027-4-0-7.eps}
    \pic{.6}{spike1021-8-0-1.eps}
    \pic{.6}{spike1022-10-0-2.eps}
    \pic{.6}{spike1024-12-0-4.eps}
    \pic{.6}{spike1026-14-0-6.eps}
    \pic{.6}{spike1121-1-1-1.eps}
    \\
    \pic{.6}{spike1103-3-1-0.eps}
    \pic{.6}{spike1124-2-1-4.eps}
    \pic{.6}{spike1130-0-1-8.eps}
    \pic{.6}{spike1122-9-1-2.eps}
    \pic{.6}{spike1321-0-3-1.eps}
    \pic{.6}{spike1326-2-3-6.eps}
    \pic{.6}{spike1322-11-3-2.eps}
    \pic{.6}{spike1324-13-3-4.eps}
    \pic{.6}{spike1522-0-5-2.eps}
    \\
    \pic{.6}{spike1622-3-6-2.eps}
    \pic{.6}{spike1630-0-6-14.eps}
    \pic{.6}{spike1724-0-7-4.eps}
    \raisebox{-6.5mm}{\pic{.6}{spike1722-15-7-2.eps}}
    \pic{.6}{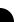}
    \pic{.6}{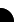}
    \pic{.6}{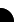}
    \raisebox{-6.5mm}{\pic{.6}{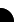}}
    \pic{.6}{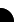}
    \vspace{-6mm}
    \\
    \pic{.6}{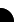}
    \pic{.6}{spike2334-4-13-3.eps}
    \pic{.6}{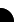}
    \pic{.6}{spike2430-0-7-4.eps}
    \pic{.6}{spike2440-12-0-4.eps}
    \pic{.6}{spike2443-13-3-4.eps}
    \pic{.6}{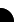}
    \pic{.6}{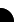}
    \pic{.6}{spike2640-14-0-6.eps}
    \\
    \pic{.6}{spike2734-4-0-7.eps}
    \pic{.6}{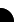}
    \pic{.6}{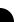}
    \pic{.6}{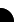}
    \pic{.6}{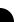}
    \pic{.6}{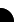}
    \\
    \hline
  \end{tabular}
\end{table}


\subsection{Bijections of puzzles}\label{ssec:biject}

We finish this section by explaining how the mutation algorithm can be
used to give new constructions of certain bijections of puzzles
defined in the papers \cite{knutson.tao.ea:honeycomb,
  knutson.tao:puzzles, buch.kresch.ea:puzzle}.  These constructions
are not required for our proof of Theorem~\ref{thm:puzzle}.  We start
by generalizing the bijections from \cite{knutson.tao:puzzles,
  buch.kresch.ea:puzzle} which were applied to prove special cases of
Theorem~\ref{thm:puzzle}.

Let $\Gright$ be the union of equivalence classes of gashes defined by
\[
\Gright \ = \ 
\left[ \raisebox{-4mm}{\pic{.7}{gash110}} \right] \cup
\left[ \raisebox{-4mm}{\pic{.7}{gash120}} \right] \cup
\left[ \raisebox{-4mm}{\pic{.7}{gash140}} \right] \cup
\left[ \raisebox{-4mm}{\pic{.7}{gash210}} \right] \cup
\left[ \raisebox{-4mm}{\pic{.7}{gash220}} \right] \cup
\left[ \raisebox{-4mm}{\pic{.7}{gash240}} \right] \,.
\]
Let $\Rright$ be the set of all resolutions of flawed puzzles for
which the right gash belongs to $\Gright$, and let $\Pright$ be the
set of all flawed puzzles for which at least one resolution belongs to
$\Rright$.  We also let $\Rleft$ and $\Pleft$ denote the sets obtained
by rotating the objects in $\Rright$ and $\Pright$ by 180 degrees.
Given any set of flawed puzzles $S$, we write $S_\gash$, $S_\scab$,
and $S_\temp$ for the subsets of puzzles in $S$ whose flaws have
the indicated types.  Notice that the gash pair of any puzzle in
$\Pright_{\!\gash}$ is located on one of the left border segments,
while the gash pair of a puzzle in $\Pleft_{\!\gash}$ is located on
one of the right border segments.

\begin{lemma}\label{lem:biject}
  We have $\Pright_{\!\temp} = \Pleft_{\temp} = \Pright \cap \Pleft$.
  Furthermore, any puzzle in $\Pright \cap \Pleft$ has exactly one
  resolution in $\Rright$ and exactly one resolution in $\Rleft$.
\end{lemma}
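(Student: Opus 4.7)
My plan is to split the argument by the type of flaw in $P$. Recall that a gash pair or marked scab flaw admits a unique resolution (trivially, and by Proposition~\ref{prop:scabres}(b) respectively), while a temporary puzzle piece flaw admits exactly three resolutions by Proposition~\ref{prop:illres}(b). I would handle the two situations separately.

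Suppose first that $P$ has a gash pair or scab flaw. Then the unique resolution of $P$ has a single right gash whose equivalence class $[g]$ is determined, and $P$ lies in $\Pright$ (resp.\ $\Pleft$) if and only if $[g] \in \Gright$ (resp.\ $[g] \in \Gleft$). I would verify by direct inspection of the list of equivalence classes in Section~\ref{ssec:gashclass} together with the definition of $\Gright$ that none of the six classes in $\Gright$ equals its own $180^\circ$ rotation, so $\Gright \cap \Gleft = \emptyset$. It follows that such a puzzle cannot lie in both $\Pright$ and $\Pleft$, which gives the inclusion $\Pright \cap \Pleft \subseteq \Pright_{\!\temp} \cap \Pleft_{\temp}$.

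Suppose next that $P$ has a temporary puzzle piece flaw. The central step is to verify, by inspection of Table~\ref{ssec:temporary}, that for every temporary puzzle piece at most one of its three resolutions lies in $\Rright$, at most one lies in $\Rleft$, and these two counts are equal. For the $120^\circ$-rotation symmetric pieces $u333$, $u444$, $u555$ the three resolutions are cyclic rotations of one another, so the right gashes lie in a single rotational orbit of equivalence classes; the check then reduces to examining this orbit against $\Gright$ and $\Gleft$. The remaining three pieces $u645$, $u753$, $u176$ lack such symmetry and must be examined directly. This yields $\Pright_{\!\temp} \subseteq \Pright \cap \Pleft$ and $\Pleft_{\temp} \subseteq \Pright \cap \Pleft$, along with the claimed uniqueness of resolutions in $\Rright$ and $\Rleft$ for every $P \in \Pright \cap \Pleft$. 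Combining the two cases gives the stated equalities.

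The main obstacle is the case-by-case check in the second step: one must identify the equivalence class of the right gash of each of the eighteen resolutions shown in Table~\ref{ssec:temporary} and confirm the exact-one-in-$\Gright$ and exact-one-in-$\Gleft$ pattern. This is straightforward given the explicit classes in Section~\ref{ssec:gashclass}, but somewhat tedious. A subsidiary bookkeeping verification there also confirms $\Gright \cap \Gleft = \emptyset$, which is what rules out the gash pair and scab cases.
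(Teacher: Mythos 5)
Your proposal follows essentially the same route as the paper: the published proof consists entirely of a pointer to Table~\ref{ssec:biject}, which lists the temporary pieces occurring in $\Pright\cap\Pleft$ together with their unique resolutions in $\Rright$ and in $\Rleft$, and your temporary-piece case is exactly that finite verification (organized via Table~\ref{ssec:temporary} and the gash classes instead); your explicit treatment of the gash-pair and scab cases makes precise what the paper leaves to the remark preceding the lemma. One small correction to your first step: knowing that no class of $\Gright$ equals its \emph{own} $180^\circ$ rotation does not yield $\Gright\cap\Gleft=\emptyset$ --- you must check that the rotation of each of the six classes fails to coincide with \emph{any} of the six classes of $\Gright$ (and for the temporary-piece check, remember that each of the six temporary piece types must be examined in all six of its orientations, not only right-side-up). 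With those finite checks stated correctly, the argument is complete.
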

\begin{proof}
  The (right-side-up) temporary puzzle pieces that occur in
  $\Pright_{\!\temp}$, and the resolutions of these pieces that
  provide elements of $\Rright$ and $\Rleft$, are listed in
  Table~\ref{ssec:biject}.
\end{proof}

\def\bijtabsep{\hspace{6.5mm}}

\noindent
\begin{center}
  \begin{tabular}{|l|}
    \hline
    \ {\sc Table}~\ref{ssec:biject}. \ Temporary puzzle pieces
    encountered in $\Pright \cap \Pleft$.
    \raisebox{5.0mm}{\mbox{}}\raisebox{-2.5mm}{\mbox{}}
    \\
    \hline
    \vspace{-1mm}
    \\
    \pic{.65}{u333r2} 
    \pic{.65}{u333} 
    \pic{.65}{u333r1} 
    \bijtabsep
    \pic{.65}{u555r2} 
    \pic{.65}{u555} 
    \pic{.65}{u555r1} 
    \bijtabsep
    \pic{.65}{u375r2} 
    \pic{.65}{u375} 
    \pic{.65}{u375r1} 
    \\
    \\
    \pic{.65}{u617r2}
    \pic{.65}{u617}
    \pic{.65}{u617r1}
    \bijtabsep
    \pic{.65}{u645r2} 
    \pic{.65}{u645} 
    \pic{.65}{u645r1} 
    \bijtabsep
    \pic{.65}{u537r2} 
    \pic{.65}{u537} 
    \pic{.65}{u537r1} 
    \\
    \\
    \pic{.65}{u761r2}
    \pic{.65}{u761}
    \pic{.65}{u761r0}
    \bijtabsep
    \pic{.65}{u564r2}
    \pic{.65}{u564}
    \pic{.65}{u564r0}
    \bijtabsep
    \pic{.65}{u753r2} 
    \pic{.65}{u753} 
    \pic{.65}{u753r1} 
    \vspace{-1mm}
    \\
    \\
    \hline
  \end{tabular}
\end{center}


\bigskip

The involution $\Phi$ defined in Section~\ref{ssec:mutations}
restricts to a bijection from $\Rright$ to $\Rleft$.  We can therefore
define a bijection $\Psi : \Pright \to \Pleft$ as follows.  Given $P
\in \Pright$, let $\wtil P$ be the unique resolution of $P$ that
belongs to $\Rright$, and let $\Psi(P)$ be the unique flawed puzzle
that has $\Phi(\wtil P)$ as a resolution.  Notice that if $\Psi(P) \in
\Pright \cap \Pleft$, then we may apply $\Psi$ an additional time.
Let $\Psi^\infty(P)$ denote the result of applying $\Psi$ to $P$ until
we obtain a flawed puzzle in the set $\Pleft \ssm \Pright =
\Pleft_{\!\gash} \cup \Pleft_{\!\scab}$.  The restriction of
$\Psi^\infty$ to $\Pright \ssm \Pleft$ is a bijection
\[
\Psi^\infty : \Pright_{\!\gash} \cup \Pright_{\!\scab} 
\xra{\ \simeq \ }
\Pleft_{\!\gash} \cup \Pleft_{\!\scab} \,.
\]
For example, $\Psi^\infty$ maps the top-left puzzle in
Figure~\ref{ssec:mutations} to the top-right puzzle, and it maps the
bottom-left puzzle to the middle-right puzzle.  Related bijections can
be obtained by conjugating $\Psi^\infty$ by rotations and/or
dualization of flawed puzzles.  This corresponds to rotating and/or
dualizing the gashes in $\Gright$.

The bijections of puzzles from \cite{knutson.tao:puzzles,
  buch.kresch.ea:puzzle} related to multiplication with divisors are
special cases of $\Psi^\infty$ and its conjugates.  Notice that our
definition of $\Psi^\infty$ involves modifying some areas of a puzzle
multiple times.  In contrast the constructions used in
\cite{knutson.tao:puzzles, buch.kresch.ea:puzzle} directly describe
the end results of the respective bijections.  By factoring the
bijection $\Psi^\infty$ into a series of mutations, we have obtained a
simpler and more conceptual description.

\begin{remark}
  The classical Littlewood-Richardson rule expresses any
  Littlewood-Richardson coefficient $c^\nu_{\la,\mu}$ as the number of
  {\em LR tableaux\/} of shape $\nu/\la$ and weight $\mu$.  The
  precise definitions can be found in e.g.\ \cite{fulton:young}.  By
  composing bijections of Fulton \cite{buch:saturation} and of
  Knutson, Tao, and Woodward \cite{knutson.tao.ea:honeycomb}, one may
  obtain a bijection between these LR tableaux and the set of puzzles
  counted by the cohomological puzzle rule for Grassmannians.  A more
  general bijection between equivariant LR tableaux and equivariant
  puzzles for Grassmannians has been defined by Kreiman
  \cite{kreiman:equivariant}.  Given a LR tableau $T$ of shape
  $\nu/\la$ and a Young diagram $\la' \subset \la$ with one box less
  than $\la$, the jeu de taquin algorithm can be used to produce a LR
  tableau $T'$ of some shape $\nu'/\la'$, where $\nu' \subset \nu$ has
  one box less than $\nu$.  The bijection $\Psi^\infty$ is compatible
  with the jeu de taquin algorithm in the sense that the puzzle
  corresponding to $T'$ may be obtained from the puzzle corresponding
  to $T$ by applying one of the conjugates of $\Psi^\infty$.  However,
  it is not possible to extend the bijection between LR tableaux and
  puzzles to a bijection between tableaux with empty boxes and flawed
  puzzles in a way such that individual jeu de taquin slides
  correspond to individual mutations.  For example, if the box of
  $\la/\la'$ is an outer corner of $\nu$, then the jeu de taquin
  algorithm involves zero slides, whereas an arbitrary number of
  mutations may be required to transform the corresponding puzzles.
  Similarly one can construct examples where two mutations correspond
  to an arbitrary number of jeu de taquin slides.  Notice also that
  not all conjugates of $\Psi^\infty$ correspond to the jeu de taquin
  algorithm.
\end{remark}

\subsection{Breathing gentle loops}

We finally address a construction of Knutson, Tao, and Woodward that
was used in \cite{knutson.tao.ea:honeycomb} to characterize
Littlewood-Richardson coefficients equal to one and to prove a related
conjecture of Fulton.  Recall from \cite{knutson.tao.ea:honeycomb}
that any Littlewood-Richardson coefficient $c^\nu_{\la,\mu}$ counts
puzzles made from the pieces \raisebox{-10pt}{\pic{.6}{u000}},
\raisebox{-10pt}{\pic{.6}{u111}}, and
\raisebox{-10pt}{\pic{.6}{rhombus01}}.  A {\em gentle loop\/} in such
a puzzle is defined to be an oriented cycle of puzzle edges, with
turns of $\pm 60^\circ$, such that each edge in the cycle separates
two puzzle pieces of different types.  In addition, each edge must be
directed so that it has either a 0-triangle on its left side or a
1-triangle on its right side.  It is proved in
\cite[Lemma~6]{knutson.tao.ea:honeycomb} that, if $\gamma$ is any
gentle loop of minimal length in a Grassmannian puzzle, then a new
valid puzzle can be obtained by replacing all puzzle pieces in the
radius-1 neighborhood of $\gamma$ with different pieces.  This is
called {\em breathing\/} the gentle loop, and it demonstrates that the
corresponding Littlewood-Richardson coefficient must be at least 2.
The breathing construction in \cite{knutson.tao.ea:honeycomb} is
defined by specifying how to modify each local region in the radius-1
neighborhood of $\gamma$.  We will sketch how a minimal gentle loop
can also be breathed by applying a sequence of mutations.  We thank
the referee for providing this application.

Given a minimal gentle loop in a puzzle, consider a {\em normal
  line\/} consisting of two puzzle edges of equal slope that cuts
across the loop (see \cite[\S 4.2]{knutson.tao.ea:honeycomb}).  On
this normal line we place two gash pairs infinitesimally close to each
other, so that the outer labels agree with the original labels of the
normal line.  The gentle loop can then be breathed by propagating one
of the gash pairs around the loop.  Whenever a temporary puzzle piece
is created, this piece must be resolved in the direction of the gentle
loop.  Eventually the moving gash pair will reach the other side of
the normal line, where it cancels the stationary gash pair.  It is
important to perform the propagations in the direction of the gentle
loop, as otherwise the process will run astray.  Notice also that,
while resolutions of temporary puzzle pieces in the construction of
$\Psi^\infty$ are chosen to keep the propagations moving in a constant
direction, the breathing construction chooses resolutions that steer
the propagations around the loop.

\begin{example}
  The two shortest gentle loops have length 6 and are interchanged by
  breathing.  We list the initial and terminal double-gashed puzzles
  as well as all intermediate puzzles that contain both a temporary
  puzzle piece and the stationary gash pair.  Notice that the gentle
  loop changes orientation during the process, and that the normal
  line can be chosen in several ways.\smallskip

  \noindent
  \pic{.53}{breathe01}
  \pic{.53}{breathe02}
  \pic{.53}{breathe03}
  \pic{.53}{breathe04}

  \noindent
  \pic{.53}{breathe05}
  \pic{.53}{breathe06}
  \pic{.53}{breathe07}
  \pic{.53}{breathe08}
\end{example}



\section{Auras of puzzles and the proof of the puzzle
  formula}\label{sec:aura}

\subsection{Aura}\label{ssec:aura}

In this section we assign an {\em aura\/} to certain objects related
to puzzles and use this concept together with the mutation algorithm
to prove Theorem~\ref{thm:puzzle}.  An aura is a linear form in the
ring $R = \C[\delta_0,\delta_1,\delta_2]$ from
Section~\ref{sec:recurse}.  We will represent auras graphically as a
collection of unit vectors labeled with linear forms from
$\Z[\delta_0,\delta_1,\delta_2]$.  The aura is then the sum of the
unit vectors multiplied to their labels.  For example, we have
\[
\psfrag{d0}{$\delta_0$}
\psfrag{d1}{$\delta_1$}
\psfrag{d2}{$\delta_2$}
\delta_0 \zeta + \delta_1 \zeta^5 + \delta_2 \zeta^9 \ = \ 
\raisebox{-8mm}{\pic{1}{exaura2}}
\]
where $\zeta = \exp(\pi i / 6) \in \C$.

Define a {\em semi-labeled edge\/} to be a puzzle edge that has a
label only on one side.  We will use the textual notation $a/$, $/a$,
$a\backslash$, $\backslash a$,
$\ds\frac{\,\raisebox{-.5mm}{$a$}\,}{}$, and
$\ds\frac{}{\,\raisebox{.8mm}{$a$}\,}$ for such edges.  The aura
$\cA(e)$ of a semi-labeled edge $e$ is defined as follows.  If the
label $a$ of $e$ is simple, then we set $\cA(e) = \delta_a v$, where
$v \in \C$ is a unit vector perpendicular to $e$ that points towards
the side of the label.  Otherwise $\cA(e)$ is determined by the rule
that, whenever the sides of a valid puzzle piece are changed to
semi-labeled edges by moving their labels slightly inside the puzzle
piece, the sum of the auras of the sides is zero.  For example, using
the puzzle piece \raisebox{-4.5mm}{\pic{.6}{u310}} we obtain
\[
\psfrag{d0}{$\delta_0$}
\psfrag{d1}{$\delta_1$}
\psfrag{d2}{$\delta_2$}
\cA(\frac{\,3\,}{}) =
- \cA(/1) - \cA(0\backslash) =
\delta_1 \zeta^5 + \delta_0 \zeta =
\raisebox{-2mm}{\pic{1}{aura3u}} \,.
\]
The auras of all semi-labeled edges can be obtained by rotating the
following identities.
{
\psfrag{d0}{$\delta_0$}
\psfrag{d1}{$\delta_1$}
\psfrag{d2}{$\delta_2$}
\begin{align*}
  \cA(\frac{\,0\,}{}) &= \raisebox{-4mm}{\pic{1}{aura0u}} &
  \cA(\frac{\,1\,}{}) &= \raisebox{-4mm}{\pic{1}{aura1u}} &
  \cA(\frac{\,2\,}{}) &= \raisebox{-4mm}{\pic{1}{aura2u}} \\
  \cA(\frac{\,3\,}{}) &= \raisebox{-2mm}{\pic{1}{aura3u}} &
  \cA(\frac{\,4\,}{}) &= \raisebox{-2mm}{\pic{1}{aura4u}} &
  \cA(\frac{\,5\,}{}) &= \raisebox{-2mm}{\pic{1}{aura5u}} \\
  \cA(\frac{\,6\,}{}) &= \raisebox{-7mm}{\pic{1}{aura6u}} &
  \cA(\frac{\,7\,}{}) &= \raisebox{-7mm}{\pic{1}{aura7u}} 
\end{align*}
}

A gash can be regarded as a union of two semi-labeled edges.  We
define the aura of a gash to be the sum of the auras of the two
semi-labeled edges.  For example, we have
\[
\psfrag{d0}{$\delta_0$}
\psfrag{d1}{$\delta_1$}
\psfrag{d2}{$\delta_2$}
\cA(\frac{\,0\,}{\,4\,}) \ = \ 
\cA(\frac{\,0\,}{}) + 
\cA(\frac{}{\,4\,}) \ = \ 
\delta_0 \zeta^3 + \delta_1 \zeta^7 + \delta_2 \zeta^{11} \ = \ 
\raisebox{-7mm}{\pic{1}{aura04u}} \,.
\]
The aura of a directed gash is the aura of the underlying undirected
gash.  The following are additional examples of auras of gashes.
\[
{
\psfrag{d0}{$\!\!\!\!\!\delta_0\!-\!\delta_1$}%
\cA(\frac{\,0\,}{\,1\,}) = \ \raisebox{-4mm}{\pic{1}{aura0u}}
}
\ \ \ \ \ \ \ \ 
{
\psfrag{d0}{$\!\!\!\!\!\delta_0\!-\!\delta_2$}%
\cA(\frac{\,0\,}{\,2\,}) = \ \raisebox{-4mm}{\pic{1}{aura0u}}
}
\ \ \ \ \ \ \ \ 
{
\psfrag{d0}{$\!\!\!\!\!\delta_1\!-\!\delta_2$}%
\cA(\frac{\,1\,}{\,2\,}) = \ \raisebox{-4mm}{\pic{1}{aura0u}}
}
\ \ \ \ \ \ \ \ 
{
\psfrag{d0}{$\delta_0$}%
\psfrag{d1}{$\delta_1$}%
\psfrag{d2}{$\delta_2$}%
\cA(\frac{\,3\,}{\,2\,}) = \!\!\!\raisebox{-7mm}{\pic{1}{aura32u}}
}
\]

\begin{lemma}\label{lem:class_aura}
  Any two gashes in the same gash class have the same aura.
\end{lemma}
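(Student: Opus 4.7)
The plan is to reduce the statement to showing that the aura is preserved by a single propagation step, after which invariance along gash classes follows by induction on the length of a chain of propagations, since by definition $[g]$ is the transitive closure of the immediate-reachability relation (which, by definition, propagates only across triangular puzzle pieces).

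Accordingly, suppose $h$ is obtained from $g$ by replacing a triangular puzzle piece $q$, with labels $(a,b,c)$ on sides $i,j,k$, by a piece $q'$ with labels $(a',b',c)$ on the same sides; the label $c$ is shared because the propagation must create only one new gash. Thus $g$ sits on edge $e_i$ with $a$ on the $q$-side and $a'$ on the outside, while $h$ sits on $e_j$ with $b'$ on the $q'$-side and $b$ on the outside. Let $\alpha_s(x)$ denote the aura of the semi-labeled edge at $e_s$ with label $x$ on the inside of $q$ (equivalently of $q'$, since both pieces occupy the same triangle). The puzzle piece rule applied to $q$ and to $q'$ gives
\[
\alpha_i(a)+\alpha_j(b)+\alpha_k(c) \ = \ 0 \ = \ \alpha_i(a')+\alpha_j(b')+\alpha_k(c),
\]
whose difference eliminates $\alpha_k(c)$ and rearranges to $\alpha_i(a)-\alpha_i(a') = \alpha_j(b')-\alpha_j(b)$. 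Combined with the identity $\cA(\text{label } x \text{ on one side of } e) = -\cA(\text{label } x \text{ on the other side of } e)$, this yields $\cA(g) = \alpha_i(a)-\alpha_i(a')$ and $\cA(h) = \alpha_j(b')-\alpha_j(b)$, so $\cA(g)=\cA(h)$.

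The only point that requires separate attention is the sign-flip identity for composed labels. For simple labels it is immediate, since the defining unit vector flips when the label is moved to the other side of the edge. For composed labels I would justify it using the puzzle piece rule applied to a $180^\circ$-rotation of a valid puzzle piece carrying that label---such a rotation is available because puzzle pieces may be rotated arbitrarily---so that the rule applied to the rotated piece expresses the outside-aura as the negative of the corresponding inside-aura; this propagates by induction on the depth of the composed label, with simple labels as the base case. This small inductive step is the only mildly delicate part of the argument; as a fallback, one can verify the lemma directly against the explicit tabulation of nontrivial gash classes in Section~\ref{ssec:gashclass}, checking the four classes and their rotations/opposites by hand.
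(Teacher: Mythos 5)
Your proof is correct and is essentially the argument the paper gives: reduce to a single propagation across a triangular piece, apply the zero-sum rule to the two triangles $q$ and $q'$ (which share the label $c$), and cancel. The only cosmetic difference is that you make explicit the sign-flip identity for moving a label to the other side of an edge, which the paper absorbs into its notation (the auras of semi-labeled edges being rotation-equivariant, a $180^\circ$ rotation negates them).
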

\begin{proof}
  Let $g$ and $h$ be gashes that are immediately reachable from each
  other.  We must show that $\cA(g) = \cA(h)$.  After rotating and
  possibly interchanging the gashes, we may assume that $g = a/b$ and
  $h = x\backslash y$.  Furthermore, the labels of the gashes appear
  on puzzle pieces of the form:
\[
{
\psfrag{a}{$x$}
\psfrag{e}{$a$}
\psfrag{c}{$c$}
\pic{.7}{uaec2}
}
\raisebox{4mm}{\ \ \ \ \text{and}\ \ \ \ }
{
\psfrag{a}{$y$}
\psfrag{e}{$b$}
\psfrag{c}{$c$}
\pic{.7}{uaec2}
}
\raisebox{4mm}{\ .}
\]
By definition of the aura of semi-labeled edges we therefore obtain
\[
\cA(a/b) 
= 
\cA(a/) + \cA(/b) 
= 
\cA(\frac{\,c\,}{}) + \cA(x\backslash) + 
\cA(\frac{}{\,c\,}) + \cA(\backslash y) 
= 
\cA(x\backslash y) \,,
\]
as required.
\end{proof}

Let $P$ be a flawed puzzle and let $\wtil P$ be a resolution of $P$.
We define $\cA(\wtil P)$ to be the aura of the right gash of $\wtil
P$.  If the flaw in $P$ is a gash pair or a marked scab, so that
$\wtil P$ is the only resolution of $P$, then we also write $\cA(P) =
\cA(\wtil P)$.  Recall that, if $S$ is any set of flawed puzzles, then
we write $S_{\gash}$, $S_{\scab}$, and $S_{\temp}$ for the subsets of
puzzles with flaws of the indicated types.  Our main application of
the mutation algorithm is the following identity, which is proved in
the generality of hexagonal puzzles with equivariant puzzle pieces in
arbitrary orientations.  The two sums in this identity will later be
related to the two sides of the recursive identity
(\ref{eqn:recurse}).

\begin{prop}\label{prop:sum_flawed}
  Let $S$ be any finite set of flawed puzzles that is closed under
  mutations.  Then we have
  \[ 
  \sum_{P \in S_{\scab}} \cA(P) \,+ \sum_{P \in S_{\gash}} \cA(P) \ =
  \ 0 \,.
  \]
\end{prop}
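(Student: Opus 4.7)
The strategy is to set up a fixed-point-free involution on the set $\wtil S$ of all resolutions of flawed puzzles in $S$, and to show that this involution negates auras. The involution is the map $\Phi$ from Theorem~\ref{thm:mutate}; since $S$ is closed under mutations, $\Phi$ restricts to an involution of $\wtil S$. Pairing $\wtil P$ with $\Phi(\wtil P)$ and then decomposing $\sum_{\wtil P\in\wtil S}\cA(\wtil P)$ according to the flaw type of the underlying puzzle will yield the claimed identity, once one checks that the three resolutions of each temporary-piece puzzle together contribute zero.

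The first key ingredient is the identity $\cA(g)+\cA(\wh g)=0$ for every directed gash $g$, which follows from the definition of the aura of a semi-labeled edge: flipping which side carries the label reverses the corresponding unit vector, so $\cA(a/)+\cA(/a)=0$ for every puzzle label $a$. Combined with Lemma~\ref{lem:class_aura}, this shows that two directed gashes in opposite classes have opposite auras; since the two directed gashes of any resolution are in opposite classes (by construction for gash pairs, and by Propositions~\ref{prop:illres}(a) and~\ref{prop:scabres}(a) for temporary-piece and scab resolutions), the aura of the right gash equals minus the aura of the left gash. Now $\Phi$ first propagates both gashes, which preserves each aura by Lemma~\ref{lem:class_aura}, and then reverses the gash directions, which preserves the underlying undirected gashes. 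The simultaneous reversal of both directions swaps which gash lies to the right of an observer facing the direction of the gashes, so the right gash of $\Phi(\wtil P)$ has the same aura as the left gash of $\wtil P$. Hence $\cA(\Phi(\wtil P))=-\cA(\wtil P)$, and since $\Phi$ is fixed-point free (direction-reversal always alters a resolution), the pairing argument gives $\sum_{\wtil P\in\wtil S}\cA(\wtil P)=0$. Each gash-pair and each marked-scab puzzle contributes its unique resolution's aura $\cA(P)$ to this sum, while each temporary-piece puzzle contributes $\cA(\wtil P_1)+\cA(\wtil P_2)+\cA(\wtil P_3)$.

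It remains to show that this last contribution vanishes. Reducing to the local picture, let $t$ be a temporary puzzle piece with labels $x,y,z$ and associated labels $x',x'',y',y'',z',z''$ as in Definition~\ref{def:temporary}. For each side $s$ of $t$ and each candidate label $\ell$, let $A(s,\ell)$ denote the aura of the semi-labeled edge at $s$ with label $\ell$ on the outside of $t$; the vanishing of the outer aura of any valid puzzle piece gives five linear relations from $r_1,r_2,r_3,t',t''$. Taking the combination $\cA(r_1)+\cA(r_2)+\cA(r_3)-\cA(t')-\cA(t'')$ of outer auras, all primed and double-primed labels cancel and one is left with $A(\text{A},x)+A(\text{B},y)+A(\text{C},z)$, so this "outer aura" of $t$ itself also vanishes. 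Identifying the right gash of each of the three resolutions from the observer convention, the sum of the three right-gash auras collapses to $\cA(t,\text{outer})-\cA(t',\text{outer})=0$ (or the analogous expression with $t''$, depending on orientation), completing the proof. The main obstacle is the bookkeeping in this last step: one must verify the geometric claim that reversing both gash directions swaps the right/left designation for all three flaw types (especially for the non-parallel gashes arising from temporary-piece and scab resolutions) and determine precisely which side of $t$ carries the right gash in each of the three resolutions. Both tasks can be handled by direct inspection of the observer definition from Section~\ref{ssec:mutations} together with the classification in Tables~\ref{ssec:temporary} and~\ref{ssec:scabs}.
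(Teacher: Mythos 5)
Your proposal is correct and follows essentially the same route as the paper: the involution $\Phi$ on the set $\wtil S$ of resolutions together with Lemma~\ref{lem:class_aura} gives $\sum_{\wtil P\in\wtil S}\cA(\wtil P)=0$, and the contribution of the three resolutions of each temporary-piece puzzle is killed using the linear relations coming from the pieces $r_1,r_2,r_3,t',t''$ of Definition~\ref{def:temporary}. Your grouping of that last computation (first deriving that the outer aura of $t$ itself vanishes, then invoking $t'$ or $t''$) is an equivalent rearrangement of the paper's direct expansion of the three right-gash auras via $r_1,r_2,r_3$.
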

\begin{proof}
  Let $\wtil S$ be the set of all resolutions of the flawed puzzles in
  $S$.  Since $S$ is closed under mutations, it follows that the
  involution $\Phi$ defined in Section~\ref{ssec:mutations} restricts
  to an involution of $\wtil S$.  Since Lemma~\ref{lem:class_aura}
  implies that $\cA(\wtil P) + \cA(\Phi(\wtil P)) = 0$ for any $\wtil
  P \in \wtil S$, we deduce that
  \[
  \sum_{\wtil P \in \wtil S} \cA(\wtil P) = 0 \,.
  \]

  It suffices to show that, if $P$ is any flawed puzzle containing a
  temporary puzzle piece, then the sum of the auras of the three
  resolutions of $P$ is equal to zero.  Assume that $P$ contains the
  temporary piece $t$ displayed in Definition~\ref{def:temporary}, and
  let the labels $x,x',x'',y,y',y'',z,z',z''$ and the puzzle pieces
  $r_1, r_2, r_3, t', t''$ be as in this definition.  Then the right
  gashes of the three resolutions of $P$ are $y/y'$, $x'\backslash x$,
  and $\ds\frac{\,z'\,}{z}$.  Thanks to the puzzle pieces $r_3$,
  $r_1$, and $r_2$ we have
  \[
  \begin{split}
    \cA(y/y') &=
    \cA(x''\backslash) + \cA(/y') + \cA(\frac{\,z'\,}{}) \ , \\
    \cA(x'\backslash x) &= 
    \cA(x'\backslash) + \cA(/y') + \cA(\frac{\,z''\,}{}) \ \text{,
      and} \\
    \cA(\frac{\,z'\,}{z}) &=
    \cA(x'\backslash) + \cA(/y'') + \cA(\frac{\,z'\,}{}) \,.
  \end{split}
  \]
  The last two puzzle pieces $t'$ and $t''$ therefore imply that
  \[
  \cA(y/y') + \cA(x'\backslash x) + \cA(\frac{\,z'\,}{z}) = 0 \,.
  \]
  This completes the proof.
\end{proof}

\subsection{The constants $C^w_{w,w}$}\label{ssec:Cwww}

We first apply the notion of aura to prove that the equivariant puzzle
rule is compatible with restrictions of Schubert classes to torus
fixed points.  Let $X = \Fl(a,b;n)$ be a two-step flag variety.

\begin{lemma}\label{lem:012side}
  Let $P$ be any triangle made from puzzle pieces (in any orientation)
  with matching side labels, and let $u$, $v$, and $w$ be strings of
  labels such that $\partial P = \border^{u,v}_w$.  If $u$ and $v$ are
  012-strings for $X$, then so is $w$.  In particular, $w$ consists of
  simple labels.
\end{lemma}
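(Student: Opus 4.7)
The plan is to use the aura function of Section~\ref{ssec:aura}, together with the fact that two 012-strings for the same $X=\Fl(a,b;n)$ share the common label-count vector $(a,b-a,n-b)$.

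First I would establish the basic vanishing identity: the sum of auras over all semi-labeled boundary edges of $P$, with labels directed inward, is zero. Each puzzle piece (triangular or equivariant, in any orientation) contributes zero to the aura sum over its own sides---by construction of $\cA$ for composed labels on triangular pieces, and because opposite parallel sides of an equivariant piece carry the same label with opposite inward normals and therefore contribute opposite auras by the rotation rule $\cA(R_\theta e)=e^{i\theta}\cA(e)$. Every interior edge of $P$ contributes two copies from its adjacent pieces which cancel for the same reason, so only the boundary edges remain and their total aura is zero.

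I would then decompose this sum by border segment. The left and right borders, consisting only of simple labels, contribute $\zeta^{11}U$ and $\zeta^{7}V$, where $U=\sum_i\delta_{u_i}$ and $V=\sum_j\delta_{v_j}$. Writing $n^w_a$ and $W_k$ for the numbers of simple $a$-labels ($a\in\{0,1,2\}$) and composed $k$-labels ($k\in\{3,\dots,7\}$) on the bottom border, I would expand the bottom contribution using the explicit formulas for $\cA(\frac{k}{})$ listed in Section~\ref{ssec:aura}. Using the $\R$-basis $\{\zeta,\zeta^5\}$ of $\C$, the relations $\zeta^3=\zeta+\zeta^5$ and $\zeta^{k+6}=-\zeta^k$, and separating the coefficient of each $\delta_a$ into its $\zeta$- and $\zeta^5$-parts yields six linear equations relating the counts.

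The key hypothesis now enters: since $u$ and $v$ are 012-strings for the same $X$, we have $n^u_a=n^v_a$ for every $a$. The two $\delta_0$-equations then combine to give $W_3+W_5+W_6+W_7=0$, so by non-negativity each of these vanishes; the two $\delta_2$-equations then force $W_4=0$; and the remaining equations give $n^w_a=n^u_a$ for each $a$. Hence $w$ has no composed labels and has the correct counts, so it is a 012-string for $X$. The step I expect to be most delicate is the explicit aura expansion for the triple composed labels $6=2(10)$ and $7=(21)0$, which requires iterating the piece identity together with the rotation rule to unpack the nested composition; once those formulas are on the table, the remainder of the argument is elementary linear algebra over $\Z_{\geq 0}$ in which the equal-count hypothesis on $u$ and $v$ is precisely what collapses the system.
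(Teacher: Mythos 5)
Your proof is correct and takes essentially the same route as the paper's: both observe that the aura sum over each puzzle piece vanishes, hence the total boundary aura vanishes, and both use the hypothesis on $u$ and $v$ to reduce to the single constraint $\sum_i \cA(\frac{\,w_i\,}{}) = \gamma\,\zeta^3$ with $\gamma = a\delta_0+(b-a)\delta_1+(n-b)\delta_2$. Your final step of expanding in the $\R$-basis $\{\zeta,\zeta^5\}$ and comparing $\delta_0$- and $\delta_2$-coefficients is the same computation as the paper's observation that these coefficients must be multiples of the vertical vector $\zeta^3$ while the composed labels $3,5,6,7$ (resp.\ $4,5,6,7$) each contribute a nonzero horizontal component.
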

\begin{proof}
  Consider all pairs $(q,e)$ where $q$ is a puzzle piece in $P$ and
  $e$ is a side of $q$.  For each such pair we regard $e$ as a
  semi-labeled edge, where the label is slightly inside the puzzle
  piece $q$.  Now consider the sum
  \[
  \phi = \sum_{(q,e)} \cA(e)
  \]
  over all such pairs.  Since the sum over the sides of each puzzle
  piece $q$ is zero, we have $\phi = 0$.  On the other hand, since
  each interior edge of $P$ appears twice in the sum with its label on
  opposite sides, it follows that the sum of the auras of all boundary
  edges of $P$ is equal to zero.  Set $\gamma = a \delta_0 + (b-a)
  \delta_1 + (n-b) \delta_2$.  The assumption that $u$ and $v$ are
  012-strings for $X$ implies that the sum of the auras of the left
  border edges is equal to $\gamma\,\zeta^{11}$, and the sum of the
  auras of the right border edges is equal to $\gamma\,\zeta^7$.  We
  deduce that
  \begin{equation}\label{eqn:wsimple}
    \sum_{i=1}^n \cA(\frac{\,w_i\,}{}) = \gamma\,\zeta^3 \,.
  \end{equation}
  Since the coefficient of $\delta_0$ in this expression is a multiple
  of the vertical vector $\zeta^3$, an inspection of the auras of
  horizontal semi-labeled edges listed in Section~\ref{ssec:aura}
  shows that $w$ does not contain any of the labels 3, 5, 6, and 7.
  Similarly, since the coefficient of $\delta_2$ is a multiple of
  $\zeta^3$, we deduce that $w$ does not contain any of the labels 4,
  5, 6, and 7.  It follows that $w$ consists of simple labels, after
  which (\ref{eqn:wsimple}) shows that $w$ is a 012-string for $X$.
\end{proof}

Let $P$ be an equivariant puzzle for $X$ and recall from
Section~\ref{sec:equiv} that we number the edges of the bottom border
segment from $1$ to $n$, starting from the left.  An edge in $P$ will
be called SW-NE if it is parallel to the left border segment, NW-SE if
it is parallel to the right border segment, and horizontal otherwise.
Given any NW-SE edge $e$ in $P$, define the {\em left projection\/} of
$e$ to be the number of the bottom edge obtained by following a line
parallel to the left border segment.  Similarly, the {\em right
  projection\/} of a SW-NE edge is the number of the bottom edge
obtained by following a line parallel to the right border segment.

Since all equivariant puzzle pieces in $P$ are vertical, we may
dissect $P$ into $\binom{n}{2}$ small vertical rhombuses together with
$n$ triangular puzzle pieces along the bottom border.  Each small
vertical rhombus $s$ is either an equivariant puzzle piece or the
union of two triangular puzzle pieces.  We will say that $s$ is in
position $(i,j)$ if $i$ is the left projection of its NW-SE edges and
$j$ is the right projection of its SW-NE edges.  In this case we
define the {\em weight\/} of $s$ to be $\weight(s) = y_j-y_i$.  This
extends the definition of the weight of an equivariant puzzle piece
given in Section~\ref{sec:equiv}.

The following result implies that the constants $C^w_{u,v}$ defined by
the equivariant puzzle rule satisfy equation (\ref{eqn:extreme})
from Theorem~\ref{thm:recurse}.

\begin{prop}\label{prop:extreme}
  Let $w$ be any 012-string for $X$.  Then there exists a unique
  equivariant puzzle $P$ for $X$ with boundary $\border^{w,w}_w$, and
  this puzzle satisfies
  \[
  \weight(P) \ = \ \prod_{i<j :\, w_i>w_j} (y_j - y_i) \ \in \Lambda \,.
  \]
\end{prop}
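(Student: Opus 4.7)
The plan is to construct the unique puzzle $P$ explicitly and then count its equivariant pieces. Using the dissection of $P$ into $\binom{n}{2}$ small vertical rhombuses at positions $(i,j)$ (with $1 \le i < j \le n$) and $n$ bottom triangles, I would build $P$ position by position. At the rhombus in position $(i,j)$, if $w_i > w_j$ I place the unique vertical equivariant puzzle piece whose two pairs of opposite sides carry the labels $\{w_i, w_j\}$; this rhombus contributes the factor $y_j - y_i$ to the weight. If $w_i \le w_j$, I instead split the rhombus into two triangular pieces whose outer simple-label sides are determined by $w_i$ and $w_j$ and whose horizontal middle edge receives the unique label (a composed label when $w_i \neq w_j$, equal to $w_i$ when $w_i = w_j$) that makes both triangles valid pieces from the list of Section~\ref{ssec:equiv_puzzles}. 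Each bottom triangle in column $k$ is then forced by $w_k$ and by the rhombuses above it.

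For existence, I would verify that each piece used in this construction appears in the list of Section~\ref{ssec:equiv_puzzles}, that labels on shared edges of neighboring pieces agree, and that every equivariant piece is vertical. The matching of labels follows because the four outer labels of the rhombus at $(i,j)$ depend only on $(w_i, w_j)$, and two rhombuses sharing an edge have coincident left or right projections on that edge, so both rhombuses assign the same label to it.

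For uniqueness, I would use the remark after the puzzle-piece list that any triangular puzzle piece is determined by two of its sides, together with the analogous fact that a vertical equivariant puzzle piece is determined by the labels of one pair of opposite sides. Starting from the boundary $\border^{w,w}_w$ and proceeding inductively over the rhombuses from the boundary inward, at each stage the outer side labels of the next rhombus are already fixed, which forces its contents and, together with the constraint that all equivariant pieces are vertical, rules out any alternative filling. The main obstacle is the case analysis over the nine pairs $(w_i, w_j) \in \{0,1,2\}^2$, confirming in each case that the forced outer labels admit exactly one consistent filling from the puzzle-piece list, and that no rival equivariant piece (simple or with composed labels as in $uh72$, $uf34$, $ug06$) fits when the configuration demands two triangles, or vice versa.

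The weight computation is then immediate: by construction, the equivariant pieces of $P$ are in bijection with the inversion set $\{(i,j) : i < j,\, w_i > w_j\}$, and each contributes $y_j - y_i$, so $\weight(P) = \prod_{i<j:\, w_i > w_j} (y_j - y_i) \in \Lambda$, as required.
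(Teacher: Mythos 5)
Your existence argument is the same recipe as the paper's, and the weight count at the end is fine. The gap is in uniqueness. The inductive step ``the outer side labels of the next rhombus are already fixed, which forces its contents'' does not go through: in any sweep of the $\binom{n}{2}$ vertical rhombuses from the boundary inward you know at most two \emph{adjacent} sides of the rhombus being processed, and two adjacent sides do not determine a rhombus. They determine the near triangle (hence the horizontal middle edge) when the rhombus splits into two triangles, but the far triangle is then constrained by only one of its three edges, and a single edge does not determine a triangular piece (for instance several distinct pieces carry the label $3$). Worse, your nine-case analysis over $(w_i,w_j)\in\{0,1,2\}^2$ silently assumes that every internal NW-SE and SW-NE edge of an \emph{arbitrary} puzzle with boundary $\border^{w,w}_w$ carries a simple label and, moreover, the label dictated by the left/right projection. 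That is precisely the nontrivial content of the uniqueness claim, not a consequence of the boundary conditions by local propagation.

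The paper closes exactly this gap with a global counting argument rather than a local induction: for any line parallel to a border segment that cuts $P$ into two subpuzzles, the triangular subpuzzle has two border segments equal to truncations of $w$ (this uses that all three sides of $\partial P$ are the \emph{same} string $w$), so Lemma~\ref{lem:012side} --- the aura summation over all piece sides --- forces the cut line to be a 012-string. Hence every NW-SE and SW-NE edge of $P$ is simple; an inspection of the pieces in Section~\ref{ssec:equiv_puzzles} then shows that a small vertical rhombus with four simple border labels must have equal labels on opposite sides, and only at that point is the filling of each rhombus forced (equivariant piece exactly when $w_i>w_j$). You would need to import Lemma~\ref{lem:012side}, or some equivalent global invariant, before your case analysis becomes legitimate; as written, the induction stalls at the first rhombus whose far triangle is undetermined.
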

\begin{proof}
  Let $P$ be any equivariant puzzle for $X$ with $\partial P =
  \border^{w,w}_w$, and consider any separation of $P$ into two
  subpuzzles by any NW-SE line that goes along puzzle edges.
  \[
  \psfrag{w1}{$w_1$}
  \psfrag{w2}{$w_2$}
  \psfrag{w3}{$w_3$}
  \psfrag{w4}{$w_4$}
  \psfrag{w5}{$w_5$}
  \pic{.7}{cut}
  \]
  Notice that two of the border segments of the triangular subpuzzle
  are 012-strings for the same two-step flag variety.  It therefore
  follows from Lemma~\ref{lem:012side} that all labels on the
  separating line are simple.  We deduce that all NW-SE puzzle edges
  in $P$ have simple labels, and a symmetric argument shows that all
  SW-NE edges have simple labels.  In particular, each small vertical
  rhombus in $P$ has simple border labels.  An inspection of the
  puzzle pieces from Section~\ref{sec:equiv} shows that, if all border
  labels of a small rhombus are simple, then opposite border edges
  have the same label.  We deduce that the border labels of the small
  vertical rhombus in position $(i,j)$ are given by:
  \[
  \psfrag{a}{\raisebox{.5mm}{$\!w_j$}}
  \psfrag{c}{\raisebox{.5mm}{$\!w_i$}}
  \psfrag{z}{\raisebox{.5mm}{$\!w_j$}}
  \psfrag{x}{\raisebox{.5mm}{$\!w_i$}}
  \pic{.8}{srhom}
  \]
  This shows that $P$ is uniquely determined by its boundary, and also
  provides a recipe for constructing $P$.  Finally, the expression for
  $\weight(P)$ is correct because the small vertical rhombus in
  position $(i,j)$ is an equivariant puzzle piece if and only if $w_i
  > w_j$.
\end{proof}

\subsection{Equivariant Aura}\label{ssec:equiv_aura}

Let $P$ be an equivariant puzzle for $X = \Fl(a,b;n)$ and let $e$ be
an edge in $P$.  If $e$ is a NW-SE edge, then we set $\weight(e) =
y_i$ where $i$ is the left projection of $e$.  If $e$ is a SW-NE edge,
then set $\weight(e) = y_j$ where $j$ is the right projection of $e$.
Finally, if $e$ is a horizontal edge, then we set $\weight(e) = y_i$
if $e$ is the $i$-th edge of the bottom border segment, and otherwise
we set $\weight(e) = 0$.

An {\em equivariant aura\/} is an element of the ring $R[y] =
R[y_1,\dots,y_n]$.  If $e$ is a semi-labeled edge in $P$, then we
define the equivariant aura of $e$ to be $\cA_T(e) = \weight(e)
\cA(e)$.  Given any puzzle piece $q$ in $P$ we let $\cA_T(q)$ be the
sum of the equivariant auras of the sides of $q$, where these sides
are regarded as semi-labeled edges by moving their labels slightly
inside $q$.  If $s$ is any small vertical rhombus in $P$ consisting of
two triangular puzzle pieces, then we let $\cA_T(s)$ be the sum of the
equivariant auras of these pieces.

\begin{prop}\label{prop:sum_equiv_aura}
  Let $u$, $v$, and $w$ be 012-strings for $X$, and let $P$ be an
  equivariant puzzle for $X$ with boundary $\partial P =
  \triangle^{u,v}_w$.  Then we have
  \[
  \sum_{s \in \scabs(P)} \cA_T(s) \ = \ C_u \zeta^{11} + C_v \zeta^7 + C_w \zeta^3
  \]
  where the sum is over all vertical scabs in $P$.
\end{prop}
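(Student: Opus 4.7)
The plan is to compute the total equivariant aura $\Sigma := \sum_q \cA_T(q)$, summed over \emph{all} puzzle pieces of $P$, in two complementary ways and match them up. On one hand, every interior edge of $P$ appears as a side of exactly two adjacent puzzle pieces carrying the same label $a$ on opposite sides, and the two incidences contribute equivariant auras $\weight(e)\delta_a \vec{v}$ and $\weight(e)\delta_a(-\vec{v})$, which cancel (since $\weight(e)$ depends only on $e$). Hence $\Sigma$ reduces to the sum of the equivariant auras of the boundary edges of $P$, each oriented with its label pointing inward. A direct coordinate check shows that the $k$-th edge of the bottom border has weight $y_k$ and inward perpendicular $\zeta^3$, and that the $k$-th edges of the left and right borders have weight $y_k$ with inward perpendiculars $\zeta^{11}$ and $\zeta^7$ respectively, so the boundary sum equals
\[
\Sigma \;=\; C_u\zeta^{11} + C_v\zeta^7 + C_w\zeta^3.
\]

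On the other hand, I will show that $\cA_T(q) = 0$ for every puzzle piece outside of scabs. There are three cases. First, an apex-up triangle sitting on the bottom border has all three sides of weight $y_i$, so $\cA_T(q) = y_i\sum_e \cA(e) = 0$ by the defining identity for auras of a valid puzzle piece. Second, each equivariant puzzle piece at position $(i,j)$ has four sides coming in two opposite pairs of matching labels, with weights $y_j$ on the SW-NE sides and $y_i$ on the NW-SE sides; since $\zeta^{11}+\zeta^5 = \zeta^7+\zeta^1 = 0$, the contributions of opposite sides cancel and $\cA_T(q)=0$. Third, for a small vertical rhombus $s = q_1 \cup q_2$ made of two triangular pieces, the shared horizontal edge has weight $0$ and drops out, and a direct computation gives
\[
\cA_T(s) \;=\; y_j\zeta^{11}(\delta_a - \delta_c) + y_i\zeta^7(\delta_b - \delta_d),
\]
where $a,c$ are the labels on the two SW-NE outer sides and $b,d$ on the two NW-SE outer sides of $s$. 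If $s$ is coherent, i.e.\ invariant under $180^\circ$ rotation, then $a=c$ and $b=d$, so $\cA_T(s)=0$.

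The proposition now follows by dissecting $P$ into $n$ apex-up bottom triangles together with $\binom{n}{2}$ small vertical rhombuses, each rhombus being either an equivariant puzzle piece, a coherent rhombus, or a scab. The first three kinds of pieces contribute zero to $\Sigma$, leaving exactly $\sum_{s\in \scabs(P)} \cA_T(s)$. The main point requiring care is the boundary computation in the first step: one must align the reading directions of $u$, $v$, $w$ with the projection rule that assigns weights $y_k$ to border edges, and with the inward perpendiculars $\zeta^{11}$, $\zeta^7$, $\zeta^3$ attached to the left, right, and bottom borders. Once this bookkeeping is pinned down, the three vanishing statements are elementary consequences of the identities $\zeta^{11}+\zeta^5 = \zeta^7+\zeta^1 = 0$ and of the definition of the aura.
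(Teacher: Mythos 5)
Your proof is correct and follows essentially the same route as the paper's (which is much terser): compute $\sum_q \cA_T(q)$ over all pieces, identify it with the boundary contribution $C_u\zeta^{11}+C_v\zeta^7+C_w\zeta^3$ via cancellation of interior edges, and check that bottom triangles, equivariant pieces, and $180^\circ$-invariant rhombuses contribute zero. The only point stated slightly too narrowly is that your cancellation arguments are written as if every label were simple (aura $=\delta_a\vec v$); for composed labels one should instead invoke the fact that moving a label to the opposite side of an edge negates its aura, which follows from the defining rule for auras and makes all your cancellations go through unchanged.
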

\begin{proof}
  Consider the sum $\phi = \sum_q \cA_T(q)$ over all puzzle pieces $q$
  in $P$.  Since the equivariant aura of all inner puzzle edges
  cancel, $\phi$ is equal to the right hand side of the claimed
  identity.  On the other hand, if $s$ is any vertical rhombus in $P$
  that is not a scab, then $\cA_T(s) = 0$.  In addition we have
  $\cA_T(q) = 0$ whenever $q$ is a triangular puzzle piece on the
  bottom border of $P$.  This implies that $\phi$ is equal to the left
  hand side of the claimed identity.
\end{proof}

A flawed puzzle $P$ is called a {\em flawed puzzle for $X$\/} if $P$
is a right-side-up triangle with boundary $\border^{u,v}_w$ where $u$,
$v$, and $w$ are 012-strings for $X$, and all equivariant puzzle
pieces and marked scabs in $P$ are vertical.  By the first condition
we mean that $u$, $v$, and $w$ are the strings of labels on or outside
the three border segments of $P$.  If $P$ is a flawed puzzle for $X$
that contains a marked scab $s$, then we set $\cA_T(P) = \cA_T(s)$.
Recall also that $\cA(P)$ is the aura of the right gash in the
resolution of $P$.

\begin{lemma}\label{lem:scabweight}
  If $P$ is any flawed puzzle for $X$ containing a marked scab $s$,
  then we have $\cA_T(P) = - \weight(s)\, \cA(P)$.
\end{lemma}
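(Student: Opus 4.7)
The plan is to compute $\cA_T(s)$ directly by a local analysis at the marked scab, comparing it to the analogous quantity for the unique equivariant puzzle piece $q$ that replaces $s$ in the resolution $\wtil P$ (guaranteed by Proposition~\ref{prop:scabres}(b)). Say $s$ lies in position $(i,j)$, so $\weight(s)=y_j-y_i$. Expanding $\cA_T(s)$ as a sum over the six semi-labeled edges of the two triangular puzzle pieces, the central horizontal edge appears once from each triangle with the same label and opposite normals, so its contributions cancel and in any case it has weight $0$. Hence only the four outer sides of the rhombus contribute, with weights $y_i$ on the two NW-SE sides and $y_j$ on the two SW-NE sides.

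The crucial tool is that flipping the direction of any semi-labeled edge negates its aura; this holds by definition for simple labels and follows from the puzzle-piece sum rule (used to define $\cA$ of composed semi-edges) for composed labels. Define $\cA_T(q)$ by the same formula but with $q$'s labels placed slightly inside. Since $q$ is vertical and equivariant, opposite sides of $q$ share a label, and opposite sides share a weight; by the flip identity each opposite pair cancels, so $\cA_T(q)=0$. Therefore $\cA_T(s)=\cA_T(s)-\cA_T(q)$, and on the two sides where $s$ and $q$ agree the contributions cancel.

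On each of the two remaining (gashed) sides, the flip identity rewrites the ``$s$-label inside'' term as minus the ``$s$-label outside'' term, which then combines with the ``$q$-label inside'' term to produce precisely minus the aura of the corresponding gash (outer label from $s$, inner label from $q$, directed outward). Thus
\[
\cA_T(s) \;=\; -\weight_R\,\cA(\text{right gash}) \;-\; \weight_L\,\cA(\text{left gash}),
\]
where $\weight_R,\weight_L$ are the weights of the right and left gashed sides. By Proposition~\ref{prop:scabres}(a) the two gashes are in opposite classes, so Lemma~\ref{lem:class_aura} together with the identity $\cA(\wh g)=-\cA(g)$ gives $\cA(\text{left})=-\cA(\text{right})=-\cA(P)$, whence $\cA_T(s)=(\weight_L-\weight_R)\,\cA(P)$.

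The proof is completed by a short case check: the two gashes meet either at the east or at the west vertex of the vertical rhombus, and in either case the convention defining the right gash singles out the gashed SW-NE edge, of weight $y_j$. Hence $\weight_R=y_j$ and $\weight_L=y_i$, so $\weight_L-\weight_R=-(y_j-y_i)=-\weight(s)$, giving $\cA_T(P)=\cA_T(s)=-\weight(s)\,\cA(P)$. The main obstacle is purely bookkeeping: one must track orientations carefully so that the flip identity is applied with the correct signs and the ``right gash'' convention is matched to the desired sign. Everything else is a short local computation that uses only the definitions of auras and the parallel-sides structure of equivariant puzzle pieces.
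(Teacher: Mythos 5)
Your proof is correct and follows essentially the same route as the paper's: a local computation at the scab using the sign flip $\cA(/a)=-\cA(a/)$, the fact that opposite sides of the equivariant piece carry equal labels, and Proposition~\ref{prop:scabres}(a) plus Lemma~\ref{lem:class_aura} to relate the two gash auras, ending with the observation that the right gash always sits on the side of weight $y_j$. The only (immaterial) difference is organizational: the paper pairs opposite parallel sides of $s$ into two virtual gashes $\cA(c/a)y_j+\cA(b\backslash d)y_i$, whereas you subtract $\cA_T(q)=0$ and pair each gashed side of $s$ with the corresponding side of $q$.
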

\begin{proof}
  Let $(i,j)$ be the position of $s$, and assume that the labels of
  $s$ and its resolution $\wtil s$ are as follows.
  \[
  {
    \psfrag{a}{\raisebox{.5mm}{$a$}}
    \psfrag{c}{\raisebox{.5mm}{$\,b$}}
    \psfrag{x}{\raisebox{.5mm}{$d$}}
    \psfrag{z}{\raisebox{.5mm}{$c$}}
    \psfrag{e}{\raisebox{.5mm}{$x$}}
    s = \raisebox{-9.5mm}{\pic{.8}{srhomx}}
  }
  \ \ \ \ \ \ ; \ \ \ \ \ \ \ 
  {
    \psfrag{x}{\raisebox{.5mm}{$d$}}
    \psfrag{a}{\raisebox{.5mm}{$a$}}
    \psfrag{xc}{$b$}
    \psfrag{yc}{$c$}
    \wtil s = \raisebox{-10mm}{\pic{.8}{grhombus2}}
  }
  \]
  Since the gashes $a/c$ and $d\backslash b$ are in opposite classes
  by Proposition~\ref{prop:scabres}, we obtain $\cA_T(s) = \cA(c/a)
  y_j + \cA(b\backslash d) y_i = - \cA(a/c) (y_j-y_i) = - \cA(P)
  \weight(s)$.  The same calculation holds if the gashes are on the
  left side of the resolution of $s$.
\end{proof}

\begin{proof}[Proof of Theorem~\ref{thm:puzzle}]
  For each triple $(u,v,w)$ of 012-strings for $X$ we let $\wh
  C^w_{u,v} \in \La$ denote the equivariant class defined by the right
  hand side of Theorem~\ref{thm:puzzle}.  In other words we set $\wh
  C^w_{u,v} = \sum_P \weight(P)$ where the sum is over all equivariant
  puzzles for $X$ with boundary $\border^{u,v}_w$.  It follows from
  Proposition~\ref{prop:extreme} that these constants satisfy equation
  (\ref{eqn:extreme}).  We must show that they also satisfy equation
  (\ref{eqn:recurse}).

  Fix $u$, $v$, and $w$, and let $S$ be the set of all flawed puzzles
  for $X$ with boundary $\border^{u,v}_w$.  Since the mutation
  algorithm preserves the set of positions of equivariant pieces and
  marked scabs in a flawed puzzle, it follows from
  Proposition~\ref{prop:sum_flawed} and Lemma~\ref{lem:scabweight}
  that
  \begin{equation}\label{eqn:twosumzero}
    \sum_{P \in S_{\scab}} \cA_T(P) \weight(P) \ = \ 
    \sum_{P \in S_{\gash}} \cA(P) \weight(P) \,.
  \end{equation}
  Here the weight of a flawed puzzle is defined as the product of the
  weights of its equivariant pieces.  By rewriting the left hand side
  of (\ref{eqn:twosumzero}) as a sum over (flawless) equivariant
  puzzles for $X$ and applying Proposition~\ref{prop:sum_equiv_aura}
  we obtain
  \[
  \begin{split}
    \sum_{P\in S_\scab} \cA_T(P) \weight(P) \ 
    &= \ 
    \sum_{\partial P = \border^{u,v}_w} \weight(P) 
    \sum_{s \in \scabs(P)} \cA_T(s) \\
    &= \ 
    \sum_{\partial P = \border^{u,v}_w} \weight(P)\, (C_u \zeta^{11} + C_v
    \zeta^7 + C_w \zeta^3)  \\
    &= \ 
    (C_u \zeta^{11} + C_v \zeta^7 + C_w \zeta^3)\, \wh C^w_{u,v} \,.
  \end{split}
  \]
  
  Assume that $P$ is a puzzle in the second sum of
  (\ref{eqn:twosumzero}) with a gash-pair on the left border segment.
  If $u'$ is the string of labels on or inside this border segment,
  then we have $u \to u'$.  Furthermore, if $i$ is the smallest index
  for which $u_i \neq u'_i$, then $\cA(P) = \cA(u_i/u'_i) = \zeta^5\,
  \delta(\frac{u}{u'})$.  Similar identities hold for puzzles with
  gash pairs on the right or bottom border segments.  The second sum
  in (\ref{eqn:twosumzero}) can therefore be rewritten as:
  \[
  \sum_{P \in S_{\gash}} \cA(P) \weight(P) \ = \ \sum_{u \to u'}
  \zeta^5\, \delta(\frac{u}{u'})\, \wh C^w_{u',v} + \sum_{v \to v'}
  \zeta\, \delta(\frac{v}{v'})\, \wh C^w_{u,v'} + \sum_{w' \to w}
  \zeta^9\, \delta(\frac{w'}{w})\, \wh C^{w'}_{u,v} \,.
  \]
  
  We conclude that the identity (\ref{eqn:twosumzero}) is equivalent
  to equation (\ref{eqn:recurse}).  Since the constants $\wh
  C^w_{u,v}$ satisfy the identities (\ref{eqn:extreme}) and
  (\ref{eqn:recurse}), it follows from Theorem~\ref{thm:recurse} that
  they are the equivariant Schubert structure constants of $X$.  This
  completes the proof.
\end{proof}



\begin{thebibliography}{10}

\bibitem{andersen.jantzen.ea:representations}
H.~H. Andersen, J.~C. Jantzen, and W.~Soergel, \emph{Representations of quantum
  groups at a {$p$}th root of unity and of semisimple groups in characteristic
  {$p$}: independence of {$p$}}, Ast\'erisque (1994), no.~220. \MR{1272539
  (95j:20036)}

\bibitem{anderson.fulton:equivariant}
D.~Anderson and W.~Fulton, \emph{Equivariant cohomology in algebraic geometry},
book in preparation, 
{\tt http://people.math.osu.edu/anderson.2804/eilenberg/}.

\bibitem{beazley.bertiger.ea:rim-hook}
E.~Beazley, A.~Bertiger, and K.~Taipale, \emph{An equivariant rim hook
  rule for cohomology of {G}rassmannians}, preprint, 2013.

\bibitem{billey:kostant}
S.~Billey, \emph{Kostant polynomials and the cohomology ring for {$G/B$}},
  Proc. Nat. Acad. Sci. U.S.A. \textbf{94} (1997), no.~1, 29--32. \MR{1425869
  (98e:14051)}

\bibitem{buch:3step}
A.~S. Buch, \emph{On the puzzle conjecture for three-step flag manifolds}, in
  preparation.

\bibitem{buch:saturation}
\bysame, \emph{The saturation conjecture (after {A}.\ {K}nutson and {T}.\
  {T}ao)}, Enseign. Math. (2) \textbf{46} (2000), no.~1-2, 43--60, With an
  appendix by William Fulton. \MR{1769536 (2001g:05105)}

\bibitem{buch:quantum}
\bysame, \emph{Quantum cohomology of {G}rassmannians}, Compositio Math.
  \textbf{137} (2003), no.~2, 227--235. \MR{1985005 (2004c:14105)}

\bibitem{buch.kresch.ea:puzzle} A.~S. Buch, A.~Kresch, K.~Purbhoo, and
  H.~Tamvakis, \emph{The puzzle conjecture for the cohomology of
    two-step flag manifolds}, preprint, 2013.

\bibitem{buch.kresch.ea:gromov-witten}
A.~S. Buch, A.~Kresch, and H.~Tamvakis, \emph{Gromov-{W}itten invariants on
  {G}rassmannians}, J. Amer. Math. Soc. \textbf{16} (2003), no.~4, 901--915
  (electronic). \MR{1992829 (2004h:14060)}

\bibitem{buch.mihalcea:quantum}
A.~S. Buch and L.~Mihalcea, \emph{Quantum {$K$}-theory of {G}rassmannians},
  Duke Math. J. \textbf{156} (2011), no.~3, 501--538. \MR{2772069
  (2011m:14092)}

\bibitem{buch.mihalcea:curve}
\bysame, \emph{Curve neighborhoods of {S}chubert varieties}, preprint, 2013.

\bibitem{chevalley:decompositions}
C.~Chevalley, \emph{Sur les d\'ecompositions cellulaires des espaces {$G/B$}},
  Proc. Sympos. Pure Math., vol.~56, 1994, pp.~1--23. \MR{1278698 (95e:14041)}

\bibitem{coskun:littlewood-richardson}
I.~Co\c{s}kun, \emph{A {L}ittlewood-{R}ichardson rule for two-step flag varieties},
  Invent. Math. \textbf{176} (2009), no.~2, 325--395. \MR{2495766
  (2010e:14048)}

\bibitem{coskun.vakil:geometric}
I.~Co\c{s}kun and R.~Vakil, \emph{Geometric positivity in the cohomology of
  homogeneous spaces and generalized {S}chubert calculus},
  Proc. Sympos. Pure Math., vol.~80, 2009, pp.~77--124.
  \MR{2483933 (2010d:14074)}

\bibitem{fulton:young}
W.~Fulton, \emph{Young tableaux}, London Mathematical Society Student Texts,
  vol.~35, 1997. \MR{1464693 (99f:05119)}

\bibitem{fulton:intersection}
\bysame, \emph{Intersection theory}, second ed., Ergebnisse der Mathematik und
  ihrer Grenzgebiete (3), vol.~2, Springer-Verlag, Berlin, 1998. \MR{1644323
  (99d:14003)}

\bibitem{fulton.pandharipande:notes}
W.~Fulton and R.~Pandharipande, \emph{Notes on stable maps and quantum
  cohomology}, Proc. Sympos. Pure Math., vol.~62, 1997, pp.~45--96.
  \MR{1492534 (98m:14025)}

\bibitem{graham:positivity}
W.~Graham, \emph{Positivity in equivariant {S}chubert calculus}, Duke Math. J.
  \textbf{109} (2001), no.~3, 599--614. \MR{1853356 (2002h:14083)}

\bibitem{kim:equivariant}
B.~Kim, \emph{On equivariant quantum cohomology}, Internat. Math. Res. Notices
  (1996), no.~17, 841--851. \MR{1420551 (98h:14013)}

\bibitem{knutson:puzzles}
A.~Knutson, \emph{Puzzles, {P}ositroid varieties, and equivariant {$K$}-theory
  of {G}rassmannians}, arXiv:1008.4302.

\bibitem{knutson:conjectural}
\bysame, \emph{A conjectural rule for ${GL}_n$ {S}chubert calculus},
  unpublished manuscript, 1999.

\bibitem{knutson.purbhoo:product}
A.~Knutson and K.~Purbhoo, \emph{Product and puzzle formulae for {${\rm GL}_n$}
  {B}elkale-{K}umar coefficients}, Electron. J. Combin. \textbf{18} (2011),
  no.~1, Paper 76, 20. \MR{2788693 (2012f:14095)}

\bibitem{knutson.tao:puzzles}
A.~Knutson and T.~Tao, \emph{Puzzles and (equivariant) cohomology of
  {G}rassmannians}, Duke Math. J. \textbf{119} (2003), no.~2, 221--260.
  \MR{1997946 (2006a:14088)}

\bibitem{knutson.tao.ea:honeycomb}
A.~Knutson, T.~Tao, and C.~Woodward, \emph{The honeycomb model of {${\rm
  GL}_n(\mathbb C)$} tensor products. {II}. {P}uzzles determine facets of the
  {L}ittlewood-{R}ichardson cone}, J. Amer. Math. Soc. \textbf{17} (2004),
  no.~1, 19--48. \MR{2015329 (2005f:14105)}

\bibitem{kontsevich.manin:gromov-witten}
M.~Kontsevich and Yu. Manin, \emph{Gromov-{W}itten classes, quantum cohomology,
  and enumerative geometry}, Comm. Math. Phys. \textbf{164} (1994), no.~3,
  525--562. \MR{1291244 (95i:14049)}

\bibitem{kostant.kumar:nil*1}
B.~Kostant and S.~Kumar, \emph{The nil {H}ecke ring and cohomology of {$G/P$}
  for a {K}ac-{M}oody group {$G$}}, Adv. in Math. \textbf{62} (1986), no.~3,
  187--237. \MR{866159 (88b:17025b)}

\bibitem{kreiman:equivariant}
V.~Kreiman, \emph{Equivariant {L}ittlewood-{R}ichardson skew tableaux}, Trans.
  Amer. Math. Soc. \textbf{362} (2010), no.~5, 2589--2617. \MR{2584612
  (2011d:05385)}

\bibitem{mihalcea:equivariant}
L.~Mihalcea, \emph{Equivariant quantum {S}chubert calculus}, Adv. Math.
  \textbf{203} (2006), no.~1, 1--33. \MR{2231042 (2007c:14061)}

\bibitem{mihalcea:positivity}
\bysame, \emph{Positivity in equivariant quantum {S}chubert calculus}, Amer. J.
  Math. \textbf{128} (2006), no.~3, 787--803. \MR{2230925 (2007c:14062)}

\bibitem{mihalcea:equivariant*1}
\bysame, \emph{On equivariant quantum cohomology of homogeneous spaces:
  {C}hevalley formulae and algorithms}, Duke Math. J. \textbf{140} (2007),
  no.~2, 321--350. \MR{2359822 (2008j:14106)}

\bibitem{molev:littlewood-richardson*1}
A.~Molev, \emph{Littlewood-{R}ichardson polynomials}, J. Algebra \textbf{321}
  (2009), no.~11, 3450--3468. \MR{2510056 (2010e:05316)}

\bibitem{molev.sagan:littlewood-richardson}
A.~Molev and B.~Sagan, \emph{A {L}ittlewood-{R}ichardson rule for factorial
  {S}chur functions}, Trans. Amer. Math. Soc. \textbf{351} (1999), no.~11,
  4429--4443. \MR{1621694 (2000a:05212)}

\bibitem{monk:geometry}
D.~Monk, \emph{The geometry of flag manifolds}, Proc. London Math. Soc. (3)
  \textbf{9} (1959), 253--286. \MR{0106911 (21 \#5641)}

\bibitem{ruan.tian:mathematical}
Y.~Ruan and G.~Tian, \emph{A mathematical theory of quantum cohomology}, Math.
  Res. Lett. \textbf{1} (1994), no.~2, 269--278. \MR{1266766 (95b:58025)}

\end{thebibliography}

\providecommand{\bysame}{\leavevmode\hbox to3em{\hrulefill}\thinspace}
\providecommand{\MR}{\relax\ifhmode\unskip\space\fi MR }
\providecommand{\MRhref}[2]{%
  \href{http://www.ams.org/mathscinet-getitem?mr=#1}{#2}
}
\providecommand{\href}[2]{#2}


\end{document}